%% file: cable_tree_thresholds.tex
\documentclass[11pt,a4paper]{scrartcl}
\usepackage{mathtools}
\usepackage{needspace}
\usepackage[mathfont=stix2]{mystyle-koma-enhanced}
\usepackage{tikz}
\usetikzlibrary{arrows.meta,decorations.pathreplacing}
\MSC{60K35, 60J65, 60J80}
\Keywords{Brownian loop soup, percolation, boundary dimension, Green function,
metric tree, branching number, Galton--Watson tree}
\hypersetup{
 pdftitle={Loop soup percolation and boundary dimension on metric trees},
 pdfauthor={Christian M\"onch},
 pdfsubject={Critical intensities and Hausdorff dimension in the Green boundary metric}
}

\newcommand{\Pp}{\mathbb P}
\newcommand{\E}{\mathbb E}
\newcommand{\Pt}{\mathbf P}
\newcommand{\Et}{\mathbf E}
\newcommand{\wt}{\widetilde}
\newcommand{\ind}{\mathbf 1}
\newcommand{\capacity}{\operatorname{cap}}
\newcommand{\ac}{\widetilde\alpha_{\mathsf c}}
\newcommand{\Ga}{\operatorname{Gamma}}
\newcommand{\de}{\deg_T}
\title{Loop soup percolation\\
and boundary dimension on metric trees}
\author{Christian M\"onch}
\date{8 October 2026}

\begin{document}
\maketitle

\begin{abstract}
We determine the percolation threshold of the Brownian loop soup on
a transient locally finite weighted tree with proper cable metric
and no killing. It equals $1-D_G/2$, where $D_G\in[0,2]$ is the
Hausdorff dimension of the boundary in the metric induced by the
normalized Green function. This gives an analogue of Lyons' branching-number
formula for Bernoulli percolation, with Green distance replacing
graph distance. Spherically symmetric trees with unit conductances
and branching number two realize every threshold in $[1/2,1]$, while
geometric conductances give explicit positive thresholds below one-half
and a subdivided binary tree has threshold zero.
Under the capacity assumption of Drewitz, Pr\'evost and Rodriguez,
we characterize the threshold one-half by a finite-energy condition,
without a minimum-degree assumption. Non-percolation at one-half
then follows from their theorem and Lupu's coupling. For
supercritical Galton--Watson trees with unit conductances, conditioned
on survival, the boundary dimension is almost surely one and the
threshold is one-half, with non-percolation at criticality. No
offspring moment assumption is required, and leaves and vertices
with one child are allowed.
The classification also applies intrinsically to proper real trees
spanned by their infinite rays.
\end{abstract}

\section{Introduction and discussion of main results}
\label{sec:results}

A classical result of Lyons~\cite{Lyons90} identifies the critical probability
of independent bond percolation on a tree as the reciprocal of its
branching number. The branching number measures growth through
cutsets. Its logarithm is the Hausdorff dimension of the boundary of
infinite rays, with distance $e^{-n}$ between rays whose common
initial path has length $n$. Finite-energy flows connect these geometric and
probabilistic descriptions; see~\cite[Chapters~3, 5 and~15]{LP}.
We prove an analogous classification for Brownian loop soup
percolation, with the boundary geometry determined by the Green
function of the underlying diffusion.

Loop soup percolation concerns the connected components formed by
a Poisson ensemble of loops. Increasing the intensity adds loops,
and the central question is whether their intersections produce an
unbounded component. Brownian loop soups were introduced by Lawler
and Werner~\cite{LW}; their Markovian counterparts and occupation
fields are developed in Le Jan~\cite{LeJan}. The percolation problem
on discrete graphs was studied by Le Jan and Lemaire~\cite{LJL}
and Chang and Sapozhnikov~\cite{CS}.

Passing from a discrete graph to its cable system replaces each edge
by an interval and the random walk by a Brownian diffusion. Loops
can then intersect inside edges, and loops that visit at most one
vertex also contribute to connectivity. At intensity $1/2$, Lupu's
coupling~\cite{Lupu16} identifies the cable loop clusters with the
sign clusters of the Gaussian free field. Together with the results
of Drewitz, Pr\'evost and Rodriguez~\cite{DPR} and Chang, Du and
Li~\cite{CDL}, this explains the occurrence of the critical intensity
$1/2$ in many transient settings. However, these results do not determine
the critical intensity on arbitrary locally finite trees.

Our main result identifies the critical intensity as one minus half
the Hausdorff dimension of the boundary in the metric induced by
the normalized Green function; see Theorem~\ref{thm:classification}.
It applies to every transient locally finite weighted tree with
proper cable metric and no killing. The dimension is independent of
the root and of the weighted graph used to represent the metric tree.

Trees with unit conductances and branching number two realize every
threshold in $[1/2,1]$, geometric conductances give thresholds in
$(0,1/2]$, and a subdivided binary tree has threshold zero; see
Corollary~\ref{cor:fixed-branching} and
Propositions~\ref{prop:weighted-example} and~\ref{prop:zero-threshold}.
Theorem~\ref{thm:half-analytic}
separates the capacity condition giving non-percolation at one-half
from the finite-energy condition giving percolation above it.
For supercritical Galton--Watson trees with unit conductances,
conditioned on survival, the Green boundary dimension is almost surely
one and the threshold is one-half, without any offspring moment or lower
offspring assumption; see Theorem~\ref{thm:GW}.

\subsection{The model and the classification theorem}

Let $T=(V,E)$ be an infinite locally finite tree with conductances
$c_e\in(0,\infty)$ and no killing. Write $u\sim v$ when
$\{u,v\}\in E$ and $c_{uv}=c_{\{u,v\}}$. The associated random
walk has transition probabilities
\begin{equation}
 P(u,v)=\frac{c_{uv}}{\sum_{w\sim u}c_{uw}}\qquad(v\sim u).
 \label{eq:walk-transition}
\end{equation}
We call $(T,c)$ transient if this walk visits each vertex only
finitely many times almost surely. Its cable system $\wt T$ is
obtained by replacing each edge $e$ by an interval of length
$\ell_e=(2c_e)^{-1}$ and identifying the endpoints with the incident
vertices. We use the induced path metric. A metric space is
\emph{proper} if every closed bounded ball is compact. Our standing
assumptions for the deterministic results are
\begin{equation}
 (T,c)\text{ is transient, and }\wt T\text{ is proper.}
 \label{eq:weighted-setting}
\end{equation}
The vertex set is closed and discrete in this metric, so properness
implies that each bounded ball contains only finitely many vertices.
A connected subset of $\wt T$ is consequently unbounded if and only
if it contains infinitely many vertices. We call a locally finite
weighted tree a \emph{weighted skeleton} of a metric tree if its cable
system is isometric to that metric tree.

Let $m$ be length measure on $\wt T$, and let $\mathcal D_0$ consist
of the continuous compactly supported functions whose restriction to
each cable is absolutely continuous with square-integrable derivative.
The Brownian diffusion used here is the
minimal diffusion associated with the closure of the following form
on $L^2(m)$:
\begin{equation}
 \mathcal E(f,f)=\frac12\int_{\wt T}|f'(z)|^2\,m(dz),
 \qquad f\in\mathcal D_0.
 \label{eq:brownian-energy}
\end{equation}
No killing refers to the absence of a killing term in the form;
we do not assume that the minimal diffusion has infinite lifetime.
Write $k_t(u,v)$ for its transition density with respect to $m$. Its
Green function is
\begin{equation}
 g(u,v)=\int_0^\infty k_t(u,v)\,dt,
 \qquad u,v\in\wt T.
 \label{eq:green}
\end{equation}
We use the electrical convention that an edge of conductance $c_e$
has resistance $c_e^{-1}$. The effective resistance from a vertex
to infinity is the limit of the resistances to the complements of
an increasing finite exhaustion, with each complement identified
as a single boundary vertex. Effective conductance is its reciprocal.
At vertices, $g$ is the electrical Green function of $(T,c)$:
$g(x,y)$ is the voltage at $y$ when one unit of current enters at
$x$ and exits at infinity. In particular, $g(x,x)$ is the effective
resistance from $x$ to infinity.

For $z\in\wt T$ and $t>0$, let $\mathsf P^t_{z,z}$ be the Brownian
bridge probability measure of duration $t$ from $z$ to $z$. We regard
loops up to circular shifts of their time parameter, and also write
$\mathsf P^t_{z,z}$ for the image of the bridge measure on this loop
space. The Brownian loop measure is
\begin{equation}
 \wt\nu(\,\cdot\,)
 =\int_{\wt T}\int_0^\infty
       \frac{k_t(z,z)}{t}\,\mathsf P^t_{z,z}(\,\cdot\,)\,dt\,m(dz),
 \label{eq:loop-measure}
\end{equation}
in the normalization of~\cite[Section~2]{Lupu16}. Under
$\Pp_\alpha^T$, the soup is a Poisson point process of loops with
intensity $\alpha\wt\nu$, and we denote its expectation by $\E_\alpha^T$.
The trace of a loop is the set of cable points it visits.
Two loops belong to the same cluster if they can be joined by a
finite chain of loops whose successive traces intersect. A cluster
is the union of the traces in one such equivalence class. We write
$u\leftrightarrow v$ when $u$ and $v$ belong to the same cluster,
and $u\leftrightarrow\infty$ when the cluster containing $u$ is
unbounded. For $\alpha>0$, set
\begin{equation}
 p_\alpha(u,v)=\Pp_\alpha^T(u\leftrightarrow v),\qquad
 p_\alpha(u,u)=1,
 \label{eq:connection-probability}
\end{equation}
and define the critical intensity by
\begin{equation}
 \ac(T)=\inf\{\alpha>0:
    \Pp_\alpha^T(\text{there is an unbounded cluster})>0\}.
 \label{eq:critical}
\end{equation}
Our supercritical assertions give positive probability of
$u\leftrightarrow\infty$ for every fixed vertex $u$. We reserve
$\Pp_\alpha^T$ and $\E_\alpha^T$ for the soup, unindexed $\Pp$ and
$\E$ refer to auxiliary random variables with the specified laws.

Fix a root $o\in V$, for $x\in V$, let $|x|$ be its graph
distance from $o$, and let $T_x$ be the subtree consisting of $x$
and its descendants. For $x\ne o$, write $x^-$ for its parent.
We also write $\de(x)$ for the degree
of $x$, $[u,v]$ for the unique geodesic between two cable points, and
$x\wedge y$ for the last common ancestor of two vertices. The
boundary $\partial T$ is the set of infinite rays
$\xi=(\xi_0,\xi_1,\ldots)$ with $\xi_0=o$ and $|\xi_n|=n$.
For $x\in V$, the \emph{cylinder}
$\partial T_x=\{\xi\in\partial T:\xi_{|x|}=x\}$ consists of the
rays passing through $x$. These sets generate the topology and
sigma-field on $\partial T$. For distinct rays $\xi,\eta$, write
$\xi\wedge\eta$ for their last common vertex.
Define the normalized Green correlations by
\begin{equation}
 \rho(u,v)=\frac{g(u,v)}{\sqrt{g(u,u)g(v,v)}},\qquad
 \rho_o(x)=\rho(o,x).
 \label{eq:rho}
\end{equation}
A \emph{cutset} is a finite set of vertices meeting every ray in
$\partial T$, no two distinct members of which are ancestors of one
another. Write $\mathcal C_o$ for the set of these cutsets; in
particular, $\{o\}\in\mathcal C_o$. The \emph{branching number} is
\begin{equation}
 \operatorname{br}(T)=\sup\left\{\lambda\ge1:
   \inf_{\Pi\in\mathcal C_o}\sum_{x\in\Pi}\lambda^{-|x|}>0\right\}.
 \label{eq:branching-number}
\end{equation}
For independent bond percolation, in which each edge is retained
independently with probability $p$, the critical probability is
$1/\operatorname{br}(T)$~\cite{Lyons90}. The weight $p^{|x|}$ is
the probability that the root is connected to $x$. For the Brownian
loop soup, the corresponding connection probabilities are comparable
to powers of $\rho_o(x)$; see \eqref{eq:power}. We therefore define
\begin{equation}
 D_G(T,o)=\sup\left\{s\ge0:
       \inf_{\Pi\in\mathcal C_o}\sum_{x\in\Pi}\rho_o(x)^s>0\right\}.
 \label{eq:DG}
\end{equation}
The relation with \eqref{eq:branching-number} is geometric:
Green correlations multiply along geodesics, as stated in
\eqref{eq:multiplicative}, and hence the function
\begin{equation}
 d_G(u,v)=-\log\rho(u,v),\qquad
 d_G(o,x)=\sum_{e=uv\in[o,x]}-\log\rho(u,v),
 \label{eq:green-metric}
\end{equation}
is a path metric with positive edge lengths. On the boundary, put
\begin{equation}
 \delta_G(\xi,\eta)=
 \begin{cases}
  e^{-d_G(o,\xi\wedge\eta)}=\rho_o(\xi\wedge\eta),&\xi\ne\eta,\\
  0,&\xi=\eta.
 \end{cases}
 \label{eq:green-boundary-metric}
\end{equation}
This is an \emph{ultrametric}: it satisfies the stronger triangle
inequality $\delta_G(\xi,\zeta)\le
\max\{\delta_G(\xi,\eta),\delta_G(\eta,\zeta)\}$.
We write $\dim_{\mathrm H}$ for Hausdorff dimension, the critical
exponent for sums of powers of diameters in covers by arbitrarily
small sets. The classical cutset argument~\cite[Section~1.8]{LP} gives
\begin{equation}
 D_G(T,o)=\dim_{\mathrm H}(\partial T,\delta_G).
 \label{eq:green-boundary-dimension}
\end{equation}
We prove compactness of this boundary metric and
\eqref{eq:green-boundary-dimension} in Lemma~\ref{lem:frostman}.
In comparison, $\log\operatorname{br}(T)$ is the boundary dimension
for the ultrametric $e^{-|\xi\wedge\eta|}$, with zero distance on
the diagonal~\cite[Chapters~1 and~15]{LP}. Thus $D_G$ uses
Green distance in place of graph distance in the same cutset
construction. Figure~\ref{fig:green-boundary} illustrates the metric,
cylinders and cutsets.

\begin{figure}[htbp]
 \centering
 \input{figures/cable_green_boundary.tex}
 \caption{Two rays and their Green boundary distance. The shared path
 $[o,a]$ is bold, and the shaded subtree has boundary $\partial T_x$.
 Encircled vertices form a cutset $\Pi$ at different graph depths.
 Dashed ends indicate continuation.}
 \label{fig:green-boundary}
\end{figure}

Unless a different tree or root is specified, all cutset infima
refer to $\mathcal C_o$. For positive
functions, $f\asymp g$ means that their ratio is bounded above and
below by positive constants, uniformly over the stated variables.

Our main result expresses the critical intensity in terms of this
boundary dimension.
\begin{theorem}
\label{thm:classification}
Under \eqref{eq:weighted-setting}, the exponent $D_G(T,o)$ is
independent of $o$, takes values in $[0,2]$, and satisfies
\begin{equation}
 \ac(T)=1-\frac{D_G(T,o)}2
       =1-\frac12\dim_{\mathrm H}(\partial T,\delta_G).
 \label{eq:classification}
\end{equation}
The exponent is also independent of the choice of a locally finite
weighted skeleton representing the same cable tree. At intensity one,
the whole cable tree is almost surely a single cluster.
\end{theorem}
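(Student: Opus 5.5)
The plan is a two-sided comparison with Lyons' argument, with $\rho_o(x)^{2(1-\alpha)}$ playing the role of $p^{|x|}$. The basic input is a two-point estimate: on a tree, $o\leftrightarrow x$ requires every point of the geodesic $[o,x]$ to be covered by loops that chain together. Since the cable tree is one-dimensional, I would first treat a single cable $[u,v]$. Using Lupu's description of loop soup clusters on a segment, where the occupation field is a squared Bessel process of dimension $2\alpha$ in the Green-adapted time scale, the probability that $u$ and $v$ are joined through loops crossing $[u,v]$ should be comparable to $\rho(u,v)^{2(1-\alpha)}$. Concretely, it should equal $1-$ the probability that a $\mathrm{BESQ}^{2\alpha}$ bridge hits zero, and this behaves like $\rho^{2-2\alpha}$ up to constants. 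The endpoint singularity is harmless because $\rho(u,v)$ is bounded away from zero uniformly along the edges.

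Next I would use the multiplicative property \eqref{eq:multiplicative} and the Markov structure of the soup along geodesics. Loops avoiding a cut point are independent on either side, and the restricted soup on a subtree is again a soup with the killed Green function. This should give the power law \eqref{eq:power}, namely $p_\alpha(o,x)\asymp\rho_o(x)^{2(1-\alpha)}$, with constants depending only on $\alpha$. The exact statement I aim for is an identity of the form $p_\alpha(o,x)=\prod_{e\in[o,x]}\psi_\alpha(\rho_e)$, where $\psi_\alpha(r)\in[c\,r^{2-2\alpha},\,r^{2-2\alpha}]$. Establishing this identity is where I expect the main obstacle: loops that wrap around vertices and enter side branches must be handled, which means conditioning on the excursions at each vertex and checking that the conditional crossing events factorize.

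With this estimate in hand, the bounds follow. For the \emph{upper bound on $\ac$}, take $s=2(1-\alpha)<D_G$. The Frostman measure from Lemma~\ref{lem:frostman} gives a unit flow of finite $s$-energy. I would then run the second-moment method on the weighted count $\sum_{x\in\Pi}\mu(\partial T_x)\ind\{o\leftrightarrow x\}/p_\alpha(o,x)$ over cutsets. This requires a two-point bound $\Pp(o\leftrightarrow x,o\leftrightarrow y)\lesssim p_\alpha(o,x)p_\alpha(o,y)/p_\alpha(o,x\wedge y)$, which comes from the same factorization, since conditionally on reaching $x\wedge y$ the two branches are independent. Paley--Zygmund then gives $\Pp(o\leftrightarrow\Pi)\ge c>0$ uniformly in $\Pi$, hence percolation. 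For the \emph{lower bound}, take $s=2(1-\alpha)>D_G$ and cutsets with $\sum_\Pi\rho_o^s\to0$. A union bound gives $\Pp(o\leftrightarrow\Pi)\to0$, and a cluster reaching infinity must cross every cutset.

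The remaining claims are routine. Root independence holds because changing $o$ multiplies $\rho_o$ by factors bounded above and below on all but finitely many cylinders. Skeleton independence holds because subdividing an edge does not change $g$ on the cable, so $d_G$ and $\delta_G$ are intrinsic to $\wt T$. For the range $[0,2]$: $D_G\ge0$ is trivial, and $D_G\le2$ follows from $\alpha=1$, where $\psi_1\equiv1$. Indeed, at intensity one the $\mathrm{BESQ}^2$ bridge never hits zero, so every segment is almost surely covered by a single chain of loops. This gives the single-cluster statement, and percolation at every $\alpha\ge1$ combined with the lower bound forces $D_G\le2$.
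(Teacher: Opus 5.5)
Your architecture matches the paper's: a first moment over cutsets for non-percolation, a Frostman flow plus a second moment for percolation, and Bessel non-hitting at intensity one. However, the two probabilistic inputs and the bound $D_G\le2$ do not work as proposed.

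First, the identity $p_\alpha(o,x)=\prod_{e\in[o,x]}\psi_\alpha(\rho_e)$ you aim for is false. By \eqref{eq:beta}, $p_\alpha(o,x)=I_{\rho_o(x)^2}(1-\alpha,\alpha)$ depends only on $\rho_o(x)=\prod_e\rho_e$, and it is not multiplicative. At $\alpha=1/2$, take a cable point $a\in[o,x]$ with $\rho(o,a)=\rho(a,x)=2^{-1/2}$. Then $p(o,a)=p(a,x)=1/2$, but $p(o,x)=\frac2\pi\arcsin\frac12=\frac13\neq\frac14$. Connection events on adjacent segments are positively correlated through the occupation at the common point, so they do not factorize. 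Even a per-edge comparison $\psi_\alpha(r)\ge c\,r^{2-2\alpha}$ would only give $p_\alpha(o,x)\ge c^{|x|}\rho_o(x)^{2-2\alpha}$, which loses the uniform constant the second moment needs. Your claim that edge correlations are bounded away from zero also fails without degree bounds; see Corollary~\ref{cor:symmetric} with unbounded $b_n$. The paper instead treats the whole geodesic at once. In the parameter $-\log\rho(o,\cdot)^2$, the normalized field $X=\ell_\alpha/g$ along $[o,x]$ is a single stationary diffusion \eqref{eq:CIR}. The components of $\wt T\setminus\{a\}$ are conditionally independent given $X(a)$; this is Lemma~\ref{lem:branching}, proved by leaf elimination in the determinant formula \eqref{eq:normalized-laplace} rather than by excursion bookkeeping. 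The event $o\leftrightarrow x$ is then zero-avoidance of this one diffusion for time $-\log\rho_o(x)^2$. The same mistake undermines your two-point bound. Given $o\leftrightarrow a$, the two branches are \emph{not} independent: both connection events increase with $X(a)$, and the conditioning tilts its law. You need a uniform bound on the correlation produced by integrating out $X(a)$. The paper obtains it from concavity of $z\mapsto\Pp_\alpha^T(a\leftrightarrow v\mid X(a)=z)$ and the size-biasing inequality of Lemma~\ref{lem:concavity}, which yields $K_\alpha$ in \eqref{eq:quasi}.

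Second, your derivation of $D_G\le2$ is a non sequitur. The first-moment bound only concerns $0<\alpha<1$, that is, exponents $2(1-\alpha)\in(0,2)$, so it is vacuous when $D_G\ge2$. Percolation at $\alpha\ge1$ is compatible with every value of $D_G$. Without an independent bound, your argument yields only $\ac=\max\{0,1-D_G/2\}$. The missing idea is potential-theoretic, via Lemma~\ref{lem:green-transform}. With $h=g(o,\cdot)/g(o,o)$, the network $\widehat c_{xy}=c_{xy}h(x)h(y)$ is recurrent, and $\rho_o(x)^{-2}=1+g_o^{-1}\sum_{e\in[o,x]}\widehat c_e^{-1}$. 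Hence $\mathcal E_2(\mu)-1$ is $g_o^{-1}$ times an electrical energy in a recurrent network, so it is infinite for every unit flow. If $D_G>2$, however, \eqref{eq:frostman} with $\beta=2$ would produce a finite one. A smaller point: your union bound only shows that the root cluster is bounded, while $\ac$ refers to unbounded clusters anywhere. You can close this by applying your root-independence argument at every vertex, or by the Harris transfer \eqref{eq:root-transfer}. Your direct root-independence comparison is sound. So is skeleton independence through the intrinsic metric $\delta_G$, since branch points are vertices of every skeleton, combined with Lemma~\ref{lem:frostman}.
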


We consequently write $D_G(T)$, or $D_G$ when the tree is fixed. Note that the theorem identifies the critical
\emph{value}, but does not generally settle percolation at that value
when $0<D_G(T)<2$. If $D_G=2$, percolation occurs with positive
probability at every positive intensity.

Green metrics and their boundary dimensions have an established
history. Blach\`ere and Brofferio~\cite{BB} introduced the hitting
distance $-\log H(u,v)$ for symmetric random walks on groups, where
$H(u,v)$ is the probability of ever visiting $v$ when started at $u$.
Blach\`ere, Ha\"issinsky and Mathieu~\cite[Theorem~1.1]{BHM}
study boundary dimension and harmonic measure for the resulting
Green metrics on hyperbolic groups. In our reversible setting,
\eqref{eq:hitting} identifies $d_G(u,v)$ with
$-\tfrac12\log[H(u,v)H(v,u)]$, the symmetrization of the hitting
distance.

There is also a direct relation to the Na\"im kernel,
which describes the energy of harmonic functions through their
boundary values. In our Green-function normalization it is, for
distinct rays,
\begin{equation}
 \Theta_o(\xi,\eta)
 =\lim_{\substack{x\to\xi\\y\to\eta}}
       \frac{g(x,y)}{g(o,x)g(o,y)}
 =\frac1{g(o,o)\delta_G(\xi,\eta)^2},
 \label{eq:green-naim}
\end{equation}
where $x$ and $y$ tend to infinity along the indicated rays.
This is the tree kernel in~\cite[equation~(6.17)]{BGWP}; the
boundary ultrametric construction is treated in its Section~6.1.
In particular, the metric in \eqref{eq:green-boundary-metric} is
the normalized inverse square root of the Na\"im kernel.

\subsection{The threshold one-half: capacity and finite energy}

Let $\varphi=(\varphi_z)_{z\in\wt T}$ be the cable Gaussian free
field, that is, the continuous centered Gaussian field with
covariance $g$, and denote its law and expectation
by $\Pp^{\mathrm G,T}$ and $\E^{\mathrm G,T}$. Its sign clusters are
the connected components of $\{\varphi\ne0\}$. Lupu's coupling
identifies these with the loop clusters at intensity $1/2$, and his
two-point formula~\cite[Proposition~5.2]{Lupu16} gives
\begin{equation}
 p_{1/2}(u,v)
 =\E^{\mathrm G,T}[\operatorname{sgn}(\varphi_u)
                         \operatorname{sgn}(\varphi_v)]
 =\frac2\pi\arcsin\rho(u,v).
 \label{eq:lupu-two-point}
\end{equation}
For trees, we obtain an exact extension to every $0<\alpha<1$ in
\eqref{eq:beta}. In particular, \eqref{eq:power} shows that the
connection probability is comparable to $\rho(u,v)^{2(1-\alpha)}$,
with constants depending only on $\alpha$. This exponent accounts
for the factor $1/2$ in \eqref{eq:classification}.

For a compact set $A\subseteq\wt T$, define its Brownian capacity by
\begin{equation}
 \capacity(A)=\inf\{\mathcal E(f,f):f\in\mathcal D_0,\ f\ge1
                         \text{ on }A\}.
 \label{eq:capacity-definition}
\end{equation}
For an arbitrary set, capacity is the supremum over its compact
subsets. For a finite vertex set it equals the effective conductance
to infinity after \emph{wiring} that set, that is, identifying its
vertices as a single vertex. We consider the capacity assumption
\begin{equation}
 \capacity(A)=\infty\quad\text{for every closed connected
 unbounded }A\subseteq\wt T.
 \label{eq:capacity-condition}
\end{equation}
Under properness, this is equivalent to the vertex condition
\begin{equation}
 \capacity(B)=\infty\quad\text{for every infinite connected }B\subseteq V.
 \label{eq:vertex-capacity-condition}
\end{equation}
Indeed, every connected unbounded cable set contains such a vertex
set. Conversely, the union of the cables joining vertices of an
infinite connected vertex set is closed and unbounded, and has the
same capacity by linear interpolation in \eqref{eq:capacity-definition}.
This is the vertex formulation of condition (Cap) in Drewitz,
Pr\'evost and Rodriguez~\cite[Lemma~3.4(1)]{DPR}.
Their Remark~3.8(1) permits omission of
the auxiliary half-lines attached at vertices when killing vanishes,
so their results apply to our cable convention.
They prove that Gaussian sign clusters have finite Brownian capacity
\cite[Theorem~3.2]{DPR}.
Together with Lupu's coupling, \eqref{eq:capacity-condition}
therefore gives non-percolation at $1/2$.

Chang, Du and Li~\cite{CDL} prove
percolation at every intensity above $1/2$ on transient
quasi-transitive metric graphs, whose conductance-preserving
automorphisms have finitely many vertex orbits. Their argument adds independent
local connections to the soup at intensity $1/2$. An exploration
from a distant boundary, together with the killed version of
\eqref{eq:lupu-two-point}, yields a differential inequality for the
connection probability. Uniform local bounds make the inequality
independent of the distance to the boundary. Any additional positive
soup intensity supplies the required connections, giving
percolation. See~\cite[Proposition~7]{CDL} and the sharpness method
of Duminil-Copin and Tassion~\cite{DCT} on which it builds.

To formulate the upper-bound criterion without degree assumptions,
we introduce flows on $T$.
A \emph{unit flow} is a probability measure $\mu$ on $\partial T$;
we write $\mu(x)=\mu(\partial T_x)$. Then
\begin{equation}
 \mu(o)=1,\qquad
 \mu(x)=\sum_{y:y^-=x}\mu(y),\qquad
 \sum_{x\in\Pi}\mu(x)=1\quad(\Pi\in\mathcal C_o).
 \label{eq:flow-conservation}
\end{equation}
For $x\ne o$, the value $\mu(x)$ is the current assigned to the edge $x^-x$.
In a network with resistance $r(x)>0$ on that edge, its electrical
energy is $\sum_{x\ne o}r(x)\mu(x)^2$. The network is transient
if and only if some unit flow has finite energy
\cite[Theorem~2.11]{LP}.

For $x\in V$, let $C_x^+$ be the effective conductance from $x$ to
infinity through $T_x$. It is zero if $T_x$ is finite or recurrent.
The \emph{electrical unit flow} $\theta$ is the unit flow minimizing
energy for the original resistances $c_{x^-x}^{-1}$. It is given by
\begin{equation}
 \theta(o)=1,\qquad
 \theta(x)=c_{x^-x}\bigl(g(o,x^-)-g(o,x)\bigr)=C_x^+g(o,x)
 \quad(x\ne o).
 \label{eq:electrical-flow}
\end{equation}
It defines a probability measure on $\partial T$, called harmonic
measure: $\theta(x)$ is the measure of rays passing through $x$.
For $x\ne o$ with $\theta(x)>0$, put
\begin{equation}
 \Lambda(x)=(C_x^+)^2g(x,x),\qquad
 \rho_o(x)^2=\frac{\theta(x)^2}{g(o,o)\Lambda(x)}.
 \label{eq:current-factor}
\end{equation}
We write $\log^+t=\max\{0,\log t\}$ for $t>0$.

\begin{theorem}
\label{thm:half-analytic}
Suppose \eqref{eq:weighted-setting} and \eqref{eq:capacity-condition}
hold. Then $D_G(T)\le1$, $\ac(T)\ge1/2$, and all clusters at
intensity $1/2$ are bounded almost surely. Moreover,
\begin{equation}
 \ac(T)=\frac12\quad\Longleftrightarrow\quad
 \inf_{\Pi\in\mathcal C_o}\sum_{x\in\Pi}\rho_o(x)^s>0
       \quad\text{for every }0<s<1.
 \label{eq:half-condition}
\end{equation}
Equivalently, for every $0<\beta<1$, the network with edge
resistances
\begin{equation}
 r_\beta(x)=\rho_o(x)^{-\beta}-\rho_o(x^-)^{-\beta}
          \quad(x\ne o)
 \label{eq:analytic-resistances}
\end{equation}
is transient. A sufficient condition for these equivalent conclusions
is that a set of rays of positive harmonic measure satisfies
\begin{equation}
 \frac{\log^+\Lambda(\xi_n)}{\log(1/\theta(\xi_n))}
                  \longrightarrow0.
 \label{eq:analytic-ray-condition}
\end{equation}
\end{theorem}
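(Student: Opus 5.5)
Take Theorem~\ref{thm:classification} as the main input, so that $\ac(T)=1-D_G/2$ is available. Then $\ac(T)\ge1/2$ is equivalent to $D_G\le1$, and $\ac(T)=1/2$ is equivalent to $D_G=1$. The plan is to obtain the bounds from the capacity result of Drewitz, Pr\'evost and Rodriguez, and then to recast the equality case as a cutset and flow statement.

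\emph{Non-percolation at $1/2$ and $D_G\le1$.} Under \eqref{eq:capacity-condition}, \cite[Theorem~3.2]{DPR} gives that every sign cluster of $\varphi$ has finite capacity almost surely. Since capacity is infinite for closed connected unbounded sets, every sign cluster is bounded. Lupu's coupling identifies the loop clusters at intensity $1/2$ with these sign clusters, so all clusters at intensity $1/2$ are bounded almost surely. Hence $\ac(T)\ge1/2$, and by \eqref{eq:classification} this forces $D_G\le1$.

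\emph{The equivalence \eqref{eq:half-condition}.} Given $D_G\le1$, we have $\ac(T)=1/2$ iff $D_G\ge1$. By the definition \eqref{eq:DG}, $D_G\ge1$ holds iff the cutset infimum is positive for every $s<1$; monotonicity in $s$ is used here, since $\rho_o\le1$. This is the displayed condition.

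\emph{Transience of the $r_\beta$-networks.} Apply the max-flow/min-cut duality \cite[Theorem~3.1 and Section~1.8]{LP}, giving first the direction ``transient $\Rightarrow$ positive cutset sums'', which rests on Nash-Williams. Suppose the $r_\beta$-network is transient, and let $\mu$ be a unit flow of finite energy. The resistances telescope along each ray, so $\sum_{y\in[o,x]\setminus\{o\}} r_\beta(y)=\rho_o(x)^{-\beta}-1$. The argument then follows Lyons' proof relating energy to capacity in $\partial T$ with respect to the kernel $\rho_o(\xi\wedge\eta)^{-\beta}$. Finite energy of $\mu$ equals, up to the constant, $\iint\rho_o(\xi\wedge\eta)^{-\beta}\,d\mu\,d\mu<\infty$. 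Positive $\beta$-capacity of $(\partial T,\delta_G)$ then gives $\dim_{\mathrm H}\ge\beta$ by Frostman, using Lemma~\ref{lem:frostman}. Conversely, if $D_G=1$ and $\beta<1$, Frostman's lemma (Lemma~\ref{lem:frostman}) applied at exponent $s\in(\beta,1)$ gives a measure with $\mu(x)\le C\rho_o(x)^s$. The $\beta$-energy $\sum_x r_\beta(x)\mu(x)^2$ is bounded by $\sum_x \rho_o(x)^{-\beta}$ times the increments of $\mu$. Summing over generations of the Green metric, i.e.\ dyadic shells $\rho_o\in(2^{-k-1},2^{-k}]$, gives $\sum_k 2^{k\beta}2^{-ks}<\infty$. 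So $D_G=1$ iff the network is transient for all $\beta<1$.

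\emph{The sufficient condition \eqref{eq:analytic-ray-condition}.} By \eqref{eq:current-factor}, $\rho_o(\xi_n)^2=\theta(\xi_n)^2/(g(o,o)\Lambda(\xi_n))$. Hence \eqref{eq:analytic-ray-condition} yields $\rho_o(\xi_n)\ge\theta(\xi_n)^{1+o(1)}$ along these rays. By Egorov, restrict to a set $A$ of positive harmonic measure on which the convergence is uniform. Then for each $s<1$ and large $|x|$ with $\partial T_x\cap A\ne\emptyset$, we get $\theta(x)\le\rho_o(x)^{s}$; the exponent is $1/(1+\epsilon)$ up to constants. For any cutset $\Pi$ deep enough, $\theta(A)\le\sum_{x\in\Pi}\theta(x\cap A)\le\sum_{x\in\Pi}\rho_o(x)^s$. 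Shallow cutsets are handled by the finiteness of any fixed level. This is the mass distribution principle and gives $D_G\ge1$.

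\emph{Main obstacle.} The expected main obstacle is the uniformity in the last step: ensuring $\theta(\xi_n)\to0$ along the rays, which holds for $\theta$-a.e.\ ray only if harmonic measure has no atoms. This follows because $\theta(\xi_n)=C^+g(o,\xi_n)$ and $g(o,\xi_n)\le\rho_o$, which tends to zero as $d_G$ is a proper path metric. The second delicate point is the energy/capacity identity with non-uniform resistances.
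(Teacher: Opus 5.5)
Your proof follows the paper's own route. The Drewitz--Pr\'evost--Rodriguez capacity theorem with Lupu's coupling gives boundedness at $1/2$, and hence $D_G\le1$ via \eqref{eq:classification}. The definition \eqref{eq:DG} gives \eqref{eq:half-condition}. Lemma~\ref{lem:frostman} with \eqref{eq:energy-series} gives transience of the $r_\beta$-networks, and a restricted harmonic measure gives the ray criterion.

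Your variations from the paper are harmless:
\begin{itemize}
\item For ``transient $\Rightarrow$ cutset condition'' you go through the energy method for Hausdorff dimension and \eqref{eq:green-boundary-dimension}. The paper uses the direct Cauchy--Schwarz bound $\mathcal E_\beta(\mu)\ge(\sum_{x\in\Pi}\rho_o(x)^\beta)^{-1}$. Nash-Williams plays no role in either version.
\item Your dyadic-shell estimate is already contained in \eqref{eq:frostman}.
\item Your Egorov set with a depth cutoff replaces the paper's set $B_A$ in \eqref{eq:analytic-good-rays}, where a single constant $A$ absorbs the first few levels.
\end{itemize}

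What needs correcting is the justification in your last paragraph. The inequality $g(o,\xi_n)\le\rho_o(\xi_n)$ is false in general. By \eqref{eq:current-factor}, $\theta(x)=\rho_o(x)\sqrt{g(o,o)\Lambda(x)}$. So $\rho_o(\xi_n)\to0$ does not force $\theta(\xi_n)\to0$ when $\Lambda(\xi_n)$ is unbounded, and \eqref{eq:analytic-ray-condition} permits that.

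Also, $\rho_o(\xi_n)\to0$ is \eqref{eq:ray-correlation-decay}. It is proved through recurrence of the transformed network in Lemma~\ref{lem:green-transform}, not as a consequence of properness of $d_G$.

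The correct reason for $\theta(\xi_n)\to0$ is \eqref{eq:harmonic-nonatomic}. The electrical flow has finite energy $g(o,o)$, and every ray has infinite resistance because its cable length is infinite. A positive limiting current along a ray would therefore carry infinite energy.

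In any case this step is not an obstacle. If you keep the constant rather than writing $\theta^{1+o(1)}$, then $\Lambda(\xi_n)\le\theta(\xi_n)^{-\varepsilon}$ gives $\rho_o(\xi_n)^s\ge g(o,o)^{-s/2}\theta(\xi_n)^{s(1+\varepsilon/2)}\ge g(o,o)^{-s/2}\theta(\xi_n)$ whenever $s(1+\varepsilon/2)\le1$. That is all your mass distribution argument needs.
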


The resistances in \eqref{eq:analytic-resistances} are positive,
since $\rho_o$ decreases strictly away from $o$; see
\eqref{eq:multiplicative}. Their transience for every $0<\beta<1$
is equivalent to $D_G\ge1$ even without
\eqref{eq:capacity-condition}, and hence characterizes the upper
bound $\ac\le1/2$. By \eqref{eq:green-boundary-dimension}, the
boundary must have Green-metric Hausdorff dimension at least one;
under \eqref{eq:capacity-condition}, it then has dimension exactly
one. The corresponding finite-energy flows may
depend on $\beta$. Condition~\eqref{eq:analytic-ray-condition}
gives a sufficient way to construct such flows using harmonic measure.
Its denominator
tends to infinity along every ray of positive current by
\eqref{eq:harmonic-nonatomic}.

For unit conductances, \eqref{eq:capacity-condition} has the exact
ray test \eqref{eq:ray-capacity-condition}: along every infinite ray,
sum the conductances from its vertices to infinity through branches
disjoint from the rest of the ray. This sum must diverge. Also
$\Lambda(x)\le C_x^+\le\de(x)-1$ for $x\ne o$, so the degree condition
\eqref{eq:ray-assumption} implies \eqref{eq:analytic-ray-condition}.
Neither assertion requires a lower degree bound.

A useful special case is
\begin{equation}
 c_e=1\quad(e\in E),\qquad \de(x)\ge3\quad(x\in V).
 \label{eq:standing}
\end{equation}
These assumptions imply \eqref{eq:weighted-setting} and
\eqref{eq:capacity-condition}, and hence the one-half lower bound;
see \eqref{eq:unit-lower-bound}.
When the degrees are also bounded above, percolation above one-half
follows from Chang--Du--Li~\cite[Remark~9(1)]{CDL}. On the
$d$-regular tree with unit conductances, for an integer $d\ge3$,
we have $\operatorname{br}(T)=d-1$ and
$\rho_o(x)=(d-1)^{-|x|}$. Green distance is therefore graph distance
multiplied by $\log(d-1)$, so the boundary dimension in
\eqref{eq:green-boundary-dimension} is one for every $d$.
The same correlation formula and the generation size
$d(d-1)^{n-1}$ for $n\ge1$ give
\begin{equation}
 \E_\alpha^T\#\{x\in V:|x|=n,\ o\leftrightarrow x\}
 \asymp(d-1)^{(2\alpha-1)n},\qquad 0<\alpha<1,
 \label{eq:regular-first-moment}
\end{equation}
by \eqref{eq:power}. The change at one-half in
\eqref{eq:regular-first-moment} reflects the competition between
the number of vertices and the decay of connection probabilities.
The dependence between connections must also be controlled; this
is the purpose of \eqref{eq:quasi}. For general trees, generations
alone need not identify the threshold, and \eqref{eq:DG} uses all
cutsets.

The discrete loop soup has a different critical intensity even on
regular trees. In our work with Betz, Drewitz and Klippel~\cite{BDKM},
we prove that its threshold on the $d$-regular tree, $d\ge3$, is
strictly larger than the cable threshold $1/2$. Makowiec and
Sapozhnikov~\cite{MS} determine an implicit critical curve and
critical behaviour for the discrete model on regular trees. Our
classification concerns the cable model and allows both the degrees
and the conductances to vary with location.

\subsection{Explicit thresholds and comparison with Gaussian levels}

The lower bound $1/2$ under \eqref{eq:standing} need not be an
equality when the degrees are unbounded. The next consequence of
\eqref{eq:classification} describes this dependence explicitly.
We call a rooted weighted tree \emph{spherically symmetric} if the
number of children depends only on the depth of the parent and an
edge's conductance depends only on the depth of its endpoints.

\begin{corollary}
\label{cor:symmetric}
Suppose all conductances are one and every vertex at depth $n$ has
$b_n$ children, where $b_0\ge3$ and $b_n\ge2$ for $n\ge1$. Define
\begin{equation}
 N_0=1,\qquad N_{n+1}=N_nb_n,\qquad
 r=\limsup_{n\to\infty}\frac{\log b_n}{\log N_n}\in[0,\infty].
 \label{eq:symmetric-parameters}
\end{equation}
Then
\begin{equation}
 D_G(T)=\frac2{2+r},\qquad
 \ac(T)=\frac{1+r}{2+r},
 \label{eq:symmetric}
\end{equation}
with $D_G(T)=0$ and $\ac(T)=1$ when $r=\infty$.
\end{corollary}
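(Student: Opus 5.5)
For a spherically symmetric tree the cutset infimum in \eqref{eq:DG} can be evaluated on generations, with the uniform flow as witness. So the plan is to compute $\rho_o(x)$ exactly up to uniform constants for $|x|=n$, read off $D_G(T)$ as a liminf ratio, and then apply Theorem~\ref{thm:classification} to get $\ac(T)=1-D_G/2=(1+r)/(2+r)$.

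\emph{Step 1: Green correlations.} Put $S_n=\sum_{j\ge n}N_{j+1}^{-1}$. Every vertex at depth $n$ has $b_n\ge2$ children, so $N_{j+1}\ge 2^{j-n}N_{n+1}$ and hence $N_{n+1}^{-1}\le S_n\le 2N_{n+1}^{-1}$.

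By symmetry, the unit current from $o$ to infinity carries current $N_{j+1}^{-1}$ on every edge between depths $j$ and $j+1$. Hence for $|x|=n$ the voltage is $g(o,x)=S_n$. In particular $g(o,o)=S_0\asymp 1$, since $N_1=b_0\ge3$.

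For $g(x,x)$ I use that it is the effective resistance from $x$ to infinity. This is a parallel combination of two pieces:
\begin{itemize}
\item the subtree $T_x$, with resistance $N_nS_n\asymp N_n/N_{n+1}=b_n^{-1}\le\tfrac12$;
\item the route through the parent, with resistance at least $1$ (the edge $x^-x$) when $n\ge1$.
\end{itemize}
Therefore $g(x,x)\asymp b_n^{-1}$ with absolute constants. Combining,
\[
 \rho_o(x)^2=\frac{g(o,x)^2}{g(o,o)g(x,x)}\asymp \frac{b_n}{N_{n+1}^2}=\frac1{N_nN_{n+1}},
\]
uniformly in $n$. The uniformity is the crucial point. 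Multiplying per-edge estimates of $H(u,v)H(v,u)$ from \eqref{eq:hitting} would instead accumulate a factor $C^n$. That factor is a positive power of $N_n\ge 2^n$ and would corrupt the exponent, which is why I compute the Green function directly.

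\emph{Step 2: dimension.} Write $\rho_n$ for the common value of $\rho_o$ at depth $n$; it is strictly decreasing in $n$. I claim
\[
 D_G(T)=\liminf_{n\to\infty}\frac{\log N_n}{\log(1/\rho_n)}.
\]

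For the upper bound, take level cutsets $\Pi_n=\{|x|=n\}$. Then $\sum_{\Pi_n}\rho_o^s=N_n\rho_n^s$, which tends to $0$ along a subsequence whenever $s$ exceeds the liminf.

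For the lower bound, take the uniform unit flow $\mu(x)=N_{|x|}^{-1}$. If $s$ is below the liminf, then $\mu(x)\le C\rho_o(x)^s$ for all $x$. Summing over any cutset and using \eqref{eq:flow-conservation} gives $\sum_\Pi\rho_o^s\ge C^{-1}$.

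Substituting Step 1, $\log(1/\rho_n)=\log N_n+\tfrac12\log b_n+O(1)$, so
\[
 \frac{\log N_n}{\log(1/\rho_n)}=\frac{1}{1+\tfrac12\frac{\log b_n}{\log N_n}+o(1)},
\]
using $\log N_n\ge n\log2\to\infty$. Its liminf is $1/(1+r/2)=2/(2+r)$, interpreted as $0$ when $r=\infty$.

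\emph{Step 3: conclusion.} Unit conductances make $\wt T$ proper. Transience holds because $g(o,o)=S_0<\infty$. So \eqref{eq:weighted-setting} holds, and Theorem~\ref{thm:classification} gives \eqref{eq:symmetric}. When $r=\infty$ we get $D_G=0$ and $\ac=1$.

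The main obstacle is Step 1: obtaining $\rho_n^2\asymp(N_nN_{n+1})^{-1}$ with constants independent of $n$. The argument above handles it by bounding $g(x,x)$ through a single parallel-resistance comparison. This uses both $b_n\ge2$, which gives $S_n\asymp N_{n+1}^{-1}$, and the parent edge having resistance exactly one.
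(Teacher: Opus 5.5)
Your proposal is correct and takes essentially the paper's route. The paper also obtains $\rho_n\asymp(N_n\sqrt{b_n})^{-1}$ uniformly in $n\ge1$, via \eqref{eq:current} with $\theta(x)=N_n^{-1}$, which is the same computation as your direct evaluation of $g(o,x)$ and $g(x,x)$; it then applies the generation formula \eqref{eq:spherical-cutsets}, proved by the same uniform-flow argument, together with Theorem~\ref{thm:classification}. One cosmetic slip: $g(o,o)=S_0\asymp b_0^{-1}$ rather than $\asymp1$, but since it is a fixed constant this does not affect uniformity in $n$.
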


Here $N_n$ is the number of vertices at depth $n$. Thus equality
$\ac(T)=1/2$ holds precisely when
$\log b_n=o(\log N_n)$: the relevant comparison is between the
number of children of one vertex and the size of its generation.
By contrast, the ordinary branching number in this setting is
$\liminf_n N_n^{1/n}$. Sparse generations with many children can
change $r$ while leaving this lower exponential growth rate fixed.
This gives the following consequence.

\begin{corollary}
\label{cor:fixed-branching}
For every $a\in[1/2,1]$, there is a spherically symmetric tree
with unit conductances and minimum degree three such that
\begin{equation}
 \operatorname{br}(T)=2,\qquad \ac(T)=a.
 \label{eq:fixed-branching}
\end{equation}
\end{corollary}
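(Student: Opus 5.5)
We derive this from Corollary~\ref{cor:symmetric}. It suffices to build, for each $a\in[1/2,1]$, a sequence $(b_n)$ with $b_0\ge3$, $b_n\ge2$, $\liminf_n N_n^{1/n}=2$ and $r=\limsup_n \log b_n/\log N_n$ equal to $r_a:=(2a-1)/(1-a)\in[0,\infty]$. Solving $(1+r)/(2+r)=a$ gives exactly this $r_a$, with $r_a=\infty$ for $a=1$. All vertices other than the root then have degree $b_n+1\ge3$ and the root has degree $b_0\ge3$, so the minimum degree is three. We also use the fact, recalled after Corollary~\ref{cor:symmetric}, that for spherically symmetric trees $\operatorname{br}(T)=\liminf_n N_n^{1/n}$.

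The construction takes $b_n=2$ except at a sparse set of \emph{burst} times $n_1<n_2<\cdots$, chosen growing very fast, say $n_{k+1}\ge 2^{n_k}$ or faster. At a burst time we set $b_{n_k}=\lceil N_{n_k}^{r_a}\rceil$ when $r_a<\infty$. When $r_a=\infty$ we set $b_{n_k}=N_{n_k}^{k}$. We take $b_0=3$ (no burst at $0$).

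\begin{itemize}
\item \emph{The value of $r$.} At burst times, $\log b_{n_k}/\log N_{n_k}\to r_a$, or $\ge k\to\infty$ in the case $r_a=\infty$. At all other times the ratio is $\log 2/\log N_n\to0$. Hence $r=r_a$, including $r_a=0$, where all ratios tend to zero. For $a=1/2$ one can simply take the binary-like tree $b_0=3$, $b_n=2$.
\item \emph{The value of $\operatorname{br}(T)$.} Since $b_n\ge2$ for $n\ge1$, we have $N_n\ge 2^{n-1}\cdot 3/2\cdots$, so $\liminf N_n^{1/n}\ge2$. For the matching upper bound, evaluate just before the burst $n_{k+1}$. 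There $N_{n_{k+1}}=N_{n_k+1}\,2^{\,n_{k+1}-n_k-1}$, and $\log N_{n_k+1}\le (1+r_a)\log N_{n_k}+O(1)$, or $\le(k+1)\log N_{n_k}$ in the infinite case.
\end{itemize}

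By induction, $\log N_{n_k}$ is bounded by a function $F(n_k)$ that depends only on the earlier choices. Choosing $n_{k+1}$ so large that $F(n_k)(k+2)/n_{k+1}\to0$ gives $\frac1{n_{k+1}}\log N_{n_{k+1}}\to\log2$. Thus the liminf equals $2$.

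The main obstacle is this bookkeeping: the bursts must be large enough to force $\limsup\log b_n/\log N_n=r_a$, yet sparse enough that the exponential growth rate along the subsequence just before each burst stays $2$. Recursive selection of $n_{k+1}$ after fixing everything up to $n_k$ handles both requirements simultaneously, since $r$ is insensitive to long stretches of $b_n=2$.

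Corollary~\ref{cor:symmetric} then gives $\ac(T)=(1+r_a)/(2+r_a)=a$, with the case $r=\infty$ giving $a=1$.
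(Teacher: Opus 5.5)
Your proposal is correct and follows essentially the paper's proof: binary branching except at sparse exceptional generations with $b_{n_k}\approx N_{n_k}^{r}$ (or $N_{n_k}^{k}$ when $r=\infty$), each $n_{k+1}$ chosen recursively so that $N_{n_{k+1}}^{1/n_{k+1}}\to2$ just before the next burst, and then Corollary~\ref{cor:symmetric}. The only difference is cosmetic: the paper sets $b_{n_k}=\max\{2,\lceil N_{n_k}^{r_k}\rceil\}$, which absorbs the case $r=0$; you instead handle that case separately with the tree $b_0=3$, $b_n=2$, which is correct and necessary in your version, since $\lceil N^{0}\rceil=1$ would violate $b_n\ge2$.
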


All these trees have Bernoulli critical probability $1/2$, while
their Green boundary dimensions range over $[0,1]$.

The quantitative bound \eqref{eq:degree-conclusion} and the
criterion \eqref{eq:ray-assumption} give further consequences for
trees without spherical symmetry. In particular,
\eqref{eq:subexponential-degrees} gives a sufficient condition in
terms of maximal degrees in each generation.

Varying the conductances permits thresholds below $1/2$, even with
bounded degrees. Pr\'evost~\cite[Section~7]{Prevost} uses regular
trees with geometric edge weights to construct unbounded Gaussian
sign clusters. Chang--Du--Li~\cite[Remark~9]{CDL} ask whether the
loop-soup threshold can be strictly below $1/2$ on such examples.
We compute the threshold for this family throughout its transient
range with proper cable metric and no root killing.

\begin{proposition}
\label{prop:weighted-example}
Fix an integer $b\ge2$. Give the root $b+1$ children and every
other vertex $b$ children. For $1/b<a\le1$, put conductance $a^n$
on each edge from generation $n$ to generation $n+1$. Then
\eqref{eq:weighted-setting} holds and
\begin{equation}
 D_G(T)=\frac{2\log b}{\log(b^2a)},\qquad
 \ac(T)=\frac{\log(ba)}{\log(b^2a)}.
 \label{eq:weighted-example}
\end{equation}
\end{proposition}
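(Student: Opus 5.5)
Since the tree is spherically symmetric and every edge from generation $n$ to $n+1$ has conductance $a^n$, the plan is to compute the Green function explicitly, check \eqref{eq:weighted-setting}, and then evaluate $D_G$ from \eqref{eq:DG}. Theorem~\ref{thm:classification} then gives $\ac(T)=1-D_G/2$. Let $N_n$ be the number of vertices at depth $n$, so $N_n=(b+1)b^{n-1}$ for $n\ge1$.

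For \eqref{eq:weighted-setting}, the cable length of a generation-$n$ edge is $\ell=(2a^n)^{-1}$, which is bounded below by $1/2$ because $a\le1$. Hence balls of finite radius meet only finitely many generations. Local finiteness then gives properness. The resistance from $o$ to infinity along levels is
\[
\sum_n\frac{1}{N_{n+1}a^n}\asymp\sum_n(ba)^{-n}<\infty,
\]
since $ba>1$. By spherical symmetry this is the effective resistance, so the tree is transient.

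Next compute $g(o,x)$ and $g(x,x)$ for $|x|=n$. By symmetry, the unit current from $o$ splits equally, so the voltage is
\[
g(o,x)=\sum_{k\ge n}\frac{1}{N_{k+1}a^k}\asymp(ba)^{-n}.
\]
For $g(x,x)$, use the series decomposition of $x$'s resistance to infinity. The subtree $T_x$ contributes
\[
(C_x^+)^{-1}=\sum_{k\ge n}b^{-(k-n+1)}a^{-k}\asymp (ba)^{-n}b^{n}\cdot b^{-n}\asymp a^{-n}.
\]
Indeed, the first term is $a^{-n}/b$, with geometric ratio $1/(ba)$. The route back through the parent has resistance $a^{-(n-1)}$ plus the remaining network, which is at least of order $a^{-n}$. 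Therefore $g(x,x)\asymp a^{-n}$, with constants depending only on $a$ and $b$. For the root, $g(o,o)\asymp1$. This gives
\[
\rho_o(x)\asymp(ba)^{-n}a^{n/2}=\bigl(b\sqrt a\bigr)^{-n}.
\]

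The main obstacle is to confirm that $g(x,x)$ is comparable to $a^{-n}$ uniformly. The worry is that escaping through the parent might lower it further; but parallel resistances give $g(x,x)\ge$ a constant times $a^{-n}$, since both branches have resistance of order at least $a^{-n}$. The upper bound $g(x,x)\le (C_x^+)^{-1}\asymp a^{-n}$ is immediate, so this step should close.

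With $\rho_o(x)\asymp\lambda^{-|x|}$ for $\lambda=b\sqrt a$, the sum $\sum_{x\in\Pi}\rho_o(x)^s$ is comparable to $\sum_{x\in\Pi}\lambda^{-s|x|}$. Hence $D_G=\log\operatorname{br}(T)/\log\lambda$, using definition \eqref{eq:branching-number} with $\lambda^s$ in place of $\lambda$. Since $\operatorname{br}(T)=b$ for this spherically symmetric tree,
\[
D_G=\frac{\log b}{\log(b\sqrt a)}=\frac{2\log b}{\log(b^2a)}.
\]
Then $\ac=1-D_G/2=\log(ba)/\log(b^2a)$, which is \eqref{eq:weighted-example}. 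As a sanity check, the condition $ba>1$ gives $\log(b^2a)>\log b>0$, so $D_G\in(0,2)$, and $a=1$ recovers $D_G=1$.
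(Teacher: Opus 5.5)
Your proof is correct. It reaches $\rho_o(x)\asymp(b\sqrt a)^{-|x|}$ and the value of $D_G$ by somewhat different means than the paper.

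\textbf{The correlation estimate.} The paper feeds the equal-splitting potential $h(x)=(ba)^{-n}$ into the transformed-network identity \eqref{eq:green-resistance}. This yields the exact formula \eqref{eq:geometric-correlation} for $\rho_n^{-2}$. You instead bound the diagonal Green function directly through $g(x,x)=(C_x^++a_x)^{-1}$, as in \eqref{eq:general-current}. With $C_x^+=(ba-1)a^{n-1}$ and $a_x\le a^{n-1}$ this gives $a^{1-n}/(ba)\le g(x,x)\le a^{1-n}/(ba-1)$. Your route is more elementary and avoids Lemma~\ref{lem:green-transform}. It only gives two-sided bounds, but that is all the argument needs.

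\textbf{The dimension.} The paper uses the spherical formula \eqref{eq:spherical-cutsets}, $D_G=\liminf_n\log N_n/(-\log\rho_n)$. That formula also handles non-geometric correlation profiles, such as those in Corollary~\ref{cor:symmetric} and Proposition~\ref{prop:zero-threshold}. You instead compare the cutset sums with those in \eqref{eq:branching-number}, using $\lambda^s$ in place of $\lambda$. This is neat, but it relies on $\rho_o$ being comparable to an exact exponential in depth. It also requires $\operatorname{br}(T)=b$, which you assert without proof. You should justify it in a line:
\begin{itemize}
\item generation cutsets give $N_n\mu^{-n}\to0$ for $\mu>b$;
\item the uniform flow gives $\sum_{x\in\Pi}\mu^{-|x|}\ge\inf_nN_n\mu^{-n}\ge1$ for $\mu\le b$.
\end{itemize}

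\textbf{Two small slips.} First, the middle expression $(ba)^{-n}b^n\cdot b^{-n}$ in your display for $(C_x^+)^{-1}$ equals $(ba)^{-n}$, not $a^{-n}$; the sentence after it computes the sum correctly. Second, splitting $g(x,x)$ into the branch through $T_x$ and the branch through the parent is a parallel decomposition, not a series one.
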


For each fixed $b$, \eqref{eq:weighted-example} realizes every
critical intensity in $(0,1/2]$. For example, $b=2$ and
$a=2^{-1/2}$ give $D_G(T)=4/3$ and $\ac(T)=1/3$.
There are also unit-conductance examples below one-half.
Pr\'evost~\cite[Remark~7.2(2)]{Prevost} uses subdivision of weighted
cables to obtain unit conductances and zero killing. To compute an
explicit loop-soup threshold, take $b=4$ and $a=1/2$, and replace
each generation-$n$ edge by $2^n$ unit
edges in series. The cable tree is unchanged. The resulting tree
has degrees two and five, and
\begin{equation}
 D_G(T)=\frac43,\qquad \ac(T)=\frac13.
 \label{eq:subdivided-example}
\end{equation}
Here \eqref{eq:analytic-ray-condition} holds, since the degrees
are bounded, but the off-ray conductances along each ray are
summable and \eqref{eq:capacity-condition} fails. Conversely, the
unit-conductance example with threshold one in Corollary~\ref{cor:fixed-branching}
satisfies \eqref{eq:capacity-condition} but fails
\eqref{eq:half-condition}. The capacity and flow assumptions thus
serve separate purposes.

The endpoint zero can also be attained with unit conductances and
bounded degrees.
\begin{proposition}
\label{prop:zero-threshold}
Start from the rooted binary tree and replace every edge between
generations $n$ and $n+1$ by
\begin{equation}
 L_n=\left\lceil\frac{2^n}{(n+1)^2}\right\rceil,
 \qquad n\ge0,
 \label{eq:zero-threshold-lengths}
\end{equation}
unit-conductance edges in series. The resulting tree $T$ has maximum
degree three, satisfies \eqref{eq:weighted-setting}, and has
\begin{equation}
 D_G(T)=2,\qquad \ac(T)=0.
 \label{eq:zero-threshold}
\end{equation}
\end{proposition}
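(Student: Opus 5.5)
The plan is to verify \eqref{eq:weighted-setting} directly, then compute $\rho_o$ at the branch vertices up to bounded factors using \eqref{eq:current-factor}, and finally bound cutset sums from below with the uniform flow. Since Theorem~\ref{thm:classification} already gives $D_G\le2$, proving $D_G(T)\ge2$ suffices, and then $\ac(T)=1-D_G/2=0$.

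\emph{Setting.} Every vertex has degree two (subdivision vertices), three (branch vertices other than the root), or two (the root). So the maximum degree is three. All edges have length $1/2$ and the tree is locally finite, hence $\wt T$ is proper. Call the images of the binary-tree vertices \emph{branch vertices}, and write $R_n=L_n$ for the series resistance of a subdivided generation-$n$ edge. For a branch vertex at binary depth $n$, let $r_n$ be the resistance to infinity through its descendants. It satisfies $r_n=(R_n+r_{n+1})/2$, so that
\[
 r_n=\sum_{k\ge n}2^{-(k-n+1)}R_k\asymp 2^{n}\sum_{k\ge n}(k+1)^{-2}\asymp\frac{2^n}{n+1}.
\]
This is finite, so the tree is transient.

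\emph{Green correlations.} For a branch vertex $x$ at binary depth $n\ge1$, symmetry of the harmonic measure gives $\theta(x)=2^{-n}$. By the computation above, $C_x^+=1/r_n\asymp(n+1)2^{-n}$. Next, $g(x,x)$ is the parallel combination of $r_n$ and the upward resistance $R^\uparrow_n$. The upward route first crosses the parent edge of resistance $R_{n-1}$, and beyond the parent the resistance is at most of order $R_{n-2}+r_{n-1}$. This yields $R^\uparrow_n\asymp 2^n/n^2$, which is much smaller than $r_n$. Hence $g(x,x)\asymp 2^n/(n+1)^2$. Then $\Lambda(x)=(C_x^+)^2g(x,x)\asymp (n+1)^22^{-2n}\cdot2^n/(n+1)^2=2^{-n}$, and \eqref{eq:current-factor} gives
\[
 \rho_o(x)^2=\frac{\theta(x)^2}{g(o,o)\Lambda(x)}\asymp 2^{-n},
\]
with constants independent of $n$. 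I expect this two-sided control of $g(x,x)$, in particular the upward part, to be the main obstacle. The polynomial factors must cancel exactly between $(C_x^+)^2$ and $g(x,x)$; for example, an upper bound on $R^\uparrow_n$ must hold with only a constant-factor loss. I would handle this with an explicit recursion for the upward resistance, combined with the monotone bounds $R_{n-1}\le R^\uparrow_n\le R_{n-1}+R_{n-2}+r_{n-1}$.

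\emph{Cutsets.} Let $\Pi\in\mathcal C_o$. Replace each $x\in\Pi$ by the nearest branch vertex $x'$ that is $x$ itself or a descendant of $x$. Then $\partial T_{x'}=\partial T_x$, and \eqref{eq:multiplicative} gives $\rho_o(x)\ge\rho_o(x')$. The resulting set $\Pi'$ is a cutset of the binary tree, because the cylinders are unchanged. For $0<s<2$, the bound $\rho_o(x')^s\ge c\,2^{-ns/2}\ge c\,2^{-n}$ holds, where $n$ is the binary depth of $x'$. Therefore
\[
 \sum_{x\in\Pi}\rho_o(x)^s\ge c\sum_{x'\in\Pi'}2^{-n(x')}=c\sum_{x'\in\Pi'}\theta(x')=c,
\]
using \eqref{eq:flow-conservation} for the uniform flow $\theta$. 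Thus $D_G(T,o)\ge s$ for every $s<2$, so $D_G(T)=2$, and \eqref{eq:classification} yields $\ac(T)=0$.
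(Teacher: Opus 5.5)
Your route is valid and differs from the paper's, but one inference in it is wrong. The bound $R^\uparrow_n\le R_{n-1}+R_{n-2}+r_{n-1}$ cannot yield $R^\uparrow_n\asymp 2^n/n^2$. By your own computation $r_{n-1}\asymp 2^n/n$, so this bound only gives $R^\uparrow_n\le C\,2^n/n$, which is a factor $n$ too large.

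The claimed order is still true. Apply Rayleigh monotonicity to the route that climbs $[x,o]$ and then enters the other subtree of the root. This gives $R^\uparrow_n\le\sum_{k<n}L_k+L_0+r_1\le C\,2^n/n^2$.

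More to the point, the step is unnecessary, and your remark that the polynomial factors must cancel exactly is mistaken. You only use a \emph{lower} bound on $\rho_o(x')$, and only for exponents $s<2$. Since $g(x,x)\le r_n=1/C_x^+$, \eqref{eq:general-current} gives $\Lambda(x)\le C_x^+$. Hence $\rho_o(x)^2\ge\theta(x)^2r_n/g(o,o)\ge c\,2^{-n}/(n+1)$. Moreover $2^{-ns/2}(n+1)^{-s/2}\ge c_s2^{-n}$ for $s<2$, so no information about the upward resistance is needed at all.

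With this repair, compare with the paper. The paper works on the unsplit binary tree with conductances $L_n^{-1}$. It computes the potentials $V_n\asymp1/(n+1)$ and the transformed conductances $\widehat c_n\asymp2^{-n}$, and reads off the two-sided estimate $\rho_n^{-2}\asymp2^n$ from the recurrent-network identity \eqref{eq:green-resistance}. The spherical formula \eqref{eq:spherical-cutsets} and skeleton invariance then finish.

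Your reduction of cutsets to branch vertices is a direct instance of Lemma~\ref{lem:point-cutsets}. Your uniform-flow bound is the half of \eqref{eq:spherical-cutsets} that is actually needed, since $D_G\le2$ is already known. The paper's route gives the sharp order of $\rho_n$ from a single sum. Yours, once corrected, is more elementary: it uses only $\theta(x)=2^{-n}$ and the downward resistances $r_n$, at the cost of a polynomial loss that the strict inequality $s<2$ absorbs.
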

In particular, bounded degrees and unit conductances alone give no
positive universal lower bound on the critical intensity.

There is also a distinction between varying the soup intensity and
varying the level of a fixed Gaussian field. For the discrete
Gaussian free field, Sznitman~\cite{Sznitman} proves positivity of
the critical level on regular trees. Ab\"acherli and
Sznitman~\cite{AS} give positivity criteria on transient trees, and
Drewitz, Gallo and Pr\'evost~\cite{DGP} establish positivity on
Galton--Watson trees with random conductances. For the cable field,
define
\begin{equation}
 \wt h_*(T)=\inf\left\{h\in\mathbb R:
   \Pp^{\mathrm G,T}\bigl(\{z\in\wt T:\varphi_z\ge h\}
       \text{ has an unbounded component}\bigr)=0\right\}.
 \label{eq:gff-critical}
\end{equation}

\begin{corollary}
\label{cor:gff-separation}
Under \eqref{eq:weighted-setting} and \eqref{eq:capacity-condition},
$\wt h_*(T)=0$.
\end{corollary}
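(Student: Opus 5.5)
We prove $\wt h_*(T)\le0$ and $\wt h_*(T)\ge0$ separately.

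\emph{Upper bound.} Fix $h>0$. Every component of $\{\varphi\ge h\}$ is a connected subset of $\{\varphi\ne0\}$, so it lies inside a single sign cluster. Under \eqref{eq:weighted-setting} and \eqref{eq:capacity-condition}, Theorem~\ref{thm:half-analytic} shows that all sign clusters are bounded almost surely. This follows from \cite[Theorem~3.2]{DPR}, which gives finite capacity of sign clusters, together with \eqref{eq:capacity-condition}. Hence $\{\varphi\ge h\}$ has no unbounded component almost surely, and $\wt h_*(T)\le h$. Letting $h\downarrow0$ gives $\wt h_*(T)\le0$.

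\emph{Lower bound.} Fix $h<0$; we must show that $\{\varphi\ge h\}$ has an unbounded component with positive probability. The plan is to reach infinity through $\{\varphi>0\}$ and to bridge its bounded gaps using level $h$.

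\begin{enumerate}
\item By the symmetry $\varphi\mapsto-\varphi$, it is equivalent to show that $\{\varphi\le|h|\}$ percolates.
\item The set $\{\varphi\le|h|\}$ contains $\{\varphi\le0\}$, so it contains every negative sign cluster, together with the parts of positive sign clusters where the field stays below $|h|$.
\item Its complement $\{\varphi>|h|\}$ lies inside the positive sign clusters, which are bounded by the upper bound. So $\{\varphi>|h|\}$ is a union of bounded pieces, each strictly inside some bounded positive cluster.
\item On a tree, removing a bounded connected piece disconnects only finitely many directions locally. The natural argument is that the complement of a locally finite union of bounded open pieces of $\wt T$ has an unbounded component.
\end{enumerate}

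Step~4 is the main obstacle, because on a tree bounded pieces can still cut off every ray. Each piece $K$ of $\{\varphi>|h|\}$ meets a ray $\xi$ in a segment. A ray is blocked if it meets infinitely many such pieces, and this could happen on every ray.

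The cleanest route avoids this: show that the component of $o$ in $\{\varphi\le|h|\}$ is infinite with positive probability via a second-moment or flow argument.

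A better approach is to use the isomorphism and the strict positivity of $|h|$, conditioning on the absolute value field as in Lupu's coupling. By the sign-independence in Lupu's coupling, each sign cluster of $|\varphi|$ independently receives a sign $\pm$. Hence $\{\varphi\le|h|\}$ contains the union of all clusters assigned sign $-$, together with the low parts of the clusters assigned $+$. Along any ray, the pieces where $|\varphi|>|h|$ are separated by zeros. Blocking therefore requires crossing a $+$-cluster whose core exceeds $|h|$ on a cutting set.

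I would first try a comparison with a Bernoulli-type percolation on the tree of sign clusters. Each $+$-cluster blocks a ray only if its high part $\{\varphi>|h|\}$ separates. If that fails, I would instead prove $\wt h_*\ge0$ in the standard way, by citing \cite{DPR}, who show that $\wt h_*\ge0$ on any transient graph for which Lupu's cable coupling holds. This holds here because $\{\varphi\ge h\}\supseteq\{|\varphi|\text{-cluster structure}\}$, whose complement $\{\varphi<h\}$ has bounded components by the upper bound applied to $-\varphi$. I expect that, with properness, this last fact reduces to finiteness of the components of the complement. Establishing this via the capacity bound, and checking that $\{\varphi\ge h\}$ itself has an unbounded component, is the step I expect to require the most care.
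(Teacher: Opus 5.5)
Your upper bound $\wt h_*(T)\le0$ is correct and rests on the same input the paper uses. For $h>0$, every component of $\{\varphi\ge h\}$ lies in a single sign cluster. Sign clusters are bounded by \cite[Theorem~3.2]{DPR} together with \eqref{eq:capacity-condition}, which is \cite[Theorem~1.1(1)]{DPR}.

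The lower bound $\wt h_*(T)\ge0$, however, is not proved. Steps~1--4 need the claim that the complement of a union of bounded pieces has an unbounded component. You correctly flag this as false on trees. Your final justification then reintroduces the same claim: that $\{\varphi<h\}$ has only bounded components does not imply that $\{\varphi\ge h\}$ has an unbounded one. For a concrete counterexample, take small disjoint open balls around all vertices of generations $n_1<n_2<\cdots$. Their union has only bounded components, and so does its complement, since every unbounded connected set contains a vertex of each generation $n_k$ beyond its starting depth.

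The second-moment and Bernoulli-comparison ideas are only named, not carried out. None of your arguments uses the absence of killing, which is exactly where this half of the statement has to come from. Lupu's coupling at intensity $1/2$ concerns only the level zero and says nothing by itself about $\{\varphi\ge h\}$ for $h<0$. The attribution ``\cite{DPR} show $\wt h_*\ge0$ on any transient graph for which Lupu's cable coupling holds'' therefore misidentifies the hypothesis that matters.

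The missing ingredient is the isomorphism between the cable free field and random interlacements, which is how the lower bound arises in \cite{DPR}. For each $u>0$, a field with the law of $\varphi$ can be coupled with the interlacement set at level $u$ so that this set lies in $\{\varphi\ge-\sqrt{2u}\}$. Without killing, every interlacement trajectory escapes to infinity. By properness, its trace is then a connected set containing infinitely many vertices, hence unbounded. Consequently $\{\varphi\ge h\}$ has an unbounded component almost surely for every $h<0$.

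The paper simply invokes \cite[Theorem~1.1(2)]{DPR}, which gives $\wt h_*=0$ under \eqref{eq:capacity-condition} and the absence of killing. To close the gap, either cite that result with both hypotheses made explicit, or reproduce the interlacement argument. The topological reduction via bounded complements cannot supply the lower bound by itself.
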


The Gaussian assertion follows from~\cite[Theorem~1.1(2)]{DPR};
Corollary~\ref{cor:fixed-branching} gives every loop-soup threshold
in $[1/2,1]$ even at branching number two. Thus a cable Gaussian critical level of zero
does not characterize a loop-soup threshold of $1/2$. This answers
negatively the final question in Chang--Du--Li~\cite[Remark~9]{CDL}.
The coupling at intensity $1/2$ does not identify changes in
intensity with changes in the Gaussian level.

\subsection{Galton--Watson trees}

The examples in Corollary~\ref{cor:fixed-branching} show that even
a fixed branching number and a minimum degree assumption do not
determine the threshold. In contrast,
independent offspring with the same distribution give threshold
$1/2$, even for arbitrarily heavy tails.

Let $\nu$ be a probability distribution on $\{0,1,2,\ldots\}$ and
let $Z$ have law $\nu$, with mean in $(1,\infty]$. An ordinary
Galton--Watson tree with offspring law $\nu$ gives every vertex an
independent number of children with this law. We allow a different
root law: give $o$ an almost surely finite number $K\ge0$ of children
and attach independent ordinary Galton--Watson trees at these
children. All conductances are one.
The root variable $K$ is independent of these trees and may be
deterministic or random, with positive probability of being nonzero.
This includes the ordinary convention $K\sim\nu$ and the augmented convention
$K=Z_0+1$ with $Z_0\sim\nu$ independent of the other offspring.
Denote the tree law by $\Pt$ and its expectation by $\Et$; we also
use these symbols for probabilities and expectations of the generic
offspring variable $Z$. We write $\nu(k)=\nu(\{k\})$ and
$\mathsf S=\{|T|=\infty\}$ for the survival event.

\begin{theorem}
\label{thm:GW}
For $\Pt(\,\cdot\mid\mathsf S)$-almost every $T$, the tree is
transient and
\begin{equation}
 D_G(T)=1,\qquad \ac(T)=\frac12.
 \label{eq:GW}
\end{equation}
On each such tree there is no percolation at intensity $1/2$, and
the root has positive probability of percolating at every
$\alpha>1/2$.
\end{theorem}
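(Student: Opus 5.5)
The plan is to reduce everything to Theorem~\ref{thm:half-analytic}. Its hypotheses are \eqref{eq:weighted-setting}, \eqref{eq:capacity-condition} and the sufficient ray condition \eqref{eq:analytic-ray-condition}. Once these hold almost surely on $\mathsf S$, that theorem gives $D_G(T)\le1$, $\ac(T)=1/2$ and boundedness of all clusters at $1/2$. The reverse inequality $D_G\ge1$ follows from \eqref{eq:classification}. Positive probability of $o\leftrightarrow\infty$ for $\alpha>1/2$ is the supercritical assertion contained in the classification.

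Leaves and degree-two vertices are harmless. Finite bushes do not affect $g$ on the infinite part, nor rays, cutset sums over infinite rays, or unboundedness of clusters. Long chains of degree-two vertices just lengthen cables. So we may restrict to the reduced tree of vertices with infinite line of descent. With unit conductances each edge has length $1/2$, so $\wt T$ is proper.

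\textbf{Step 1 (transience and properness).} On $\mathsf S$ the tree is transient. This is a known result (Collevecchio; Chen--Peres; see \cite{LP}), valid without moment assumptions when the mean exceeds one. It can also be seen directly: the reduced tree is a Galton--Watson tree with mean $>1$ and no zero offspring. Its chains of single children have geometric lengths, and the resulting series--parallel network has positive conductance by a standard recursive-distribution argument for $C^+$. The same argument shows $C_x^+>0$ for every $x$ with infinite line of descent, which we need below.

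\textbf{Step 2 (capacity condition).} We verify the ray test \eqref{eq:ray-capacity-condition} for unit conductances. Along every ray, the sum of the off-ray conductances to infinity must diverge. The difficulty is that this is required for \emph{every} ray, which is an uncountable family. First I would try an ergodic/Borel--Cantelli argument in the reduced tree. Fix $\epsilon>0$ and call a vertex good if it has at least two children with infinite descent and some such child $y$ has $C_y^+\ge\epsilon$. Choose $\epsilon$ so that goodness has probability $q>0$.

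The possible bad event is a ray with only finitely many good vertices. Each vertex is bad with probability $1-q<1$, but the number of rays explodes. So we should count rays along which every vertex is bad. The subtree of bad descendants is itself a Galton--Watson tree, whose mean offspring equals the mean number of children $y$ of a vertex, conditioned on the vertex being bad.

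A cleaner route is a large-deviation choice of $\epsilon$. Letting $\epsilon\to0$ makes the event ``some off-ray sibling has $C^+\ge\epsilon$'' almost certain whenever there is branching. I would instead consider vertices whose reduced offspring is at least $2$. Along any ray these occur infinitely often, since otherwise the ray would eventually be a single chain, contradicting survival of the other branches. A compactness/König-type argument then extracts, from a single bad ray, a subtree in which the total off-ray conductance is summable. Such a ray would be a ray of the sub-Galton--Watson tree in which each branching edge has $C^+<\epsilon_k$ at the $k$-th branching. The expected number of these rays is bounded by $\prod_k m\,\Pt(C^+<\epsilon_k)$ with $\epsilon_k$ summable. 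This product tends to $0$ because $\Pt(C^+<\epsilon)\to0$ as $\epsilon\to0$.

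Heavy tails only increase branching, and this can be handled by truncating the offspring (monotonicity of capacity under taking subtrees). I expect this step, making the union bound over rays rigorous with $\epsilon_k$ chosen as a function of the branching count, to be the main obstacle.

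\textbf{Step 3 (ray condition).} We need \eqref{eq:analytic-ray-condition} on a set of rays of positive harmonic measure, using $\Lambda(x)\le C_x^+\le\de(x)-1$. Under harmonic measure, the sequence of environments seen along a $\theta$-random ray is stationary. This is the Lyons--Pemantle--Peres harmonic-measure invariance on Galton--Watson trees, which needs no moments. Its harmonic-measure dimension is $\mathsf d>0$, so $\log(1/\theta(\xi_n))\sim \mathsf d\, n$ almost surely.

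It remains to show $\log^+\de(\xi_n)=o(n)$ along $\theta$-typical rays. By stationarity it suffices that $\log^+ C^+$ has finite mean under the invariant measure, or merely that $\sum_n \mathsf P(\log^+\Lambda(\xi_n)>\epsilon n)<\infty$. Since $\Lambda=(C^+)^2g$ and $g(x,x)\le 1/C_x^+$, we have $\Lambda\le C^+$. The harmonic flow entering $x$ splits in proportion to the children's conductances, so the stationary law of $C^+$ is the size-biased law of $C^+$ weighted by conductance. Hence I would bound $\mathsf P(C^+_{\xi_n}>e^{\epsilon n})$ by $e^{-\epsilon n}\Et[(C^+)^2]$-type expressions. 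If that fails under heavy tails, I would use the bound $C^+\le$ number of children, together with a Borel--Cantelli argument on $\log^+$ of a size-biased variable, which is summable in $n$ at any fixed exponential scale.
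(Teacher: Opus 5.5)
Your overall plan is sound in outline: prune to $T^\infty$, verify \eqref{eq:capacity-condition}, obtain $D_G\ge1$, then apply Theorem~\ref{thm:half-analytic} and the classification. In the finite-logarithmic-moment case, your harmonic-ray argument is a reasonable variant of the paper's. There is, however, a genuine gap.

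\textbf{Step 3 fails without a logarithmic moment.} Your Step~3 cannot handle $\Et_\infty\log(Y+1)=\infty$, and the theorem assumes no moments.

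Along a harmonic ray the degrees are not size-biased. The mass is sandwiched as $U_xW_x\le\theta_T(x)\le U_x$, with $U_x$ independent of the subtree at $x$. This gives $\Et_\infty\sum_{|x|=n}\theta_T(x)\ind_{\{Y_x\in A\}}\le\kappa^{-1}\Pt_\infty(Y\in A)$ with $\kappa=\Et_\infty W$, and a comparable lower bound for large degrees. Hence, under $\mathbf Q(dT,d\xi)=\Pt_\infty(dT)\theta_T(d\xi)$, the series $\sum_n\mathbf Q(\log\de(\xi_n)>\varepsilon n)$ converges exactly when the logarithmic moment is finite.

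Your claim that it is ``summable at any fixed exponential scale'' is false. Indeed $\sum_n\Pt_\infty(\log(Y+1)>\varepsilon n)=\infty$ whenever $\Et_\infty\log(Y+1)=\infty$, and your second-moment bound would need $\Et_\infty Y^2<\infty$. Moreover, your $\mathsf d$ is then infinite. So \eqref{eq:analytic-ray-condition} compares the single term $\log Y_{\xi_n}$ with the accumulated sum $\sum_{k<n}\log Y_{\xi_k}$. For i.i.d.\ increments of infinite mean, such ratios have $\limsup=\infty$ almost surely (a theorem of Kesten). One should therefore not expect the ray criterion to hold at all.

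The missing idea is an independent lower bound on $D_G$:
\begin{enumerate}
\item Find a vertex $v$ with $Y_v\le M$ whose retained subtree survives, where $x$ is kept if $Y_z\le M$ for all $z\in[v,x]$. This subtree is a Galton--Watson tree with mean $m_M=\Et_\infty[Y\ind_{\{Y\le M\}}]$.
\item Each retained edge has $\rho_T(x,y)\ge(\de(x)\de(y))^{-1/2}\ge(M+1)^{-1}$, by the immediate-step bound on hitting probabilities.
\item A flow with $\mu(x)\le A(M+1)^{-s|x|_v}$ then yields $D_G\ge\log m_M/\log(M+1)$.
\item An infinite logarithmic moment forces $\limsup_M\log m_M/\log(M+1)=1$, by a dyadic tail estimate.
\end{enumerate}
The reverse inequality $D_G\le1$ comes from the capacity condition.

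\textbf{Step 2 also fails as sketched.} Prescribing a summable sequence $\epsilon_k$ does not capture every ray with $\sum_ns_n(\xi)<\infty$. The factor $m\,\Pt(C^+<\epsilon_k)$ is infinite when the mean is infinite. Truncating offspring cannot repair this: \eqref{eq:capacity-condition} must hold for every ray of the full tree, and passing to a subtree discards rays rather than controlling them.

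Use one fixed threshold $b>0$ instead. If $\sum_ns_n(\xi)<\infty$, then eventually $s_n(\xi)<b$, so eventually every off-ray child $z$ of $\xi_n$ has $W_z<b$. Call such a step bad. The expected number of bad paths of length $n$ from a vertex is $a^n$, with $a=\Et_\infty[Yp^{Y-1}]$ and $p=\Pt_\infty(W<b)$. This $a$ is finite for $p<1$ even with infinite mean, and tends to $\bar\nu(1)<1$ as $b\downarrow0$. So almost surely no vertex starts an infinite bad path. Heavy tails help precisely through the factor $p^{Y-1}$, which your bound discards by testing only one sibling.
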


Earlier work already gives information at intensity $1/2$ and,
for bounded offspring, determines the critical intensity.
Ab\"acherli and Sznitman~\cite[Propositions~2.2 and~5.2]{AS}
give a criterion for bounded cable Gaussian sign clusters and
verify it almost surely for supercritical Galton--Watson trees
with finite mean offspring, conditioned on non-extinction.
Through Lupu's coupling~\cite[Proposition~2.1]{Lupu16}, this implies
non-percolation of the Brownian loop soup at intensity $1/2$ and
hence a lower bound of $1/2$ for its critical intensity.
When $\nu$ has bounded support, Chang, Du and
Li~\cite[Remark~9(1)]{CDL} give percolation at every
$\alpha>1/2$, since each realization has bounded degrees and unit
conductances. Combined with the lower bound, this already yields
$\ac(T)=1/2$ in that case. To our knowledge, \eqref{eq:GW} is the
first determination of the critical intensity for arbitrary
supercritical offspring laws with finite offspring almost surely,
without any moment assumption.

In particular, the result allows both infinite mean and infinite
logarithmic moment. The law in \eqref{eq:heavy-tail-example} has
infinite logarithmic moment and no finite positive power moment.

\subsection{An intrinsic formulation for real trees}

A \emph{real tree} is a metric space in which any two points are
joined by a unique simple arc, and this arc is isometric to an
interval. Let $(\mathcal T,o)$ be a rooted real tree. It is \emph{spanned by
its infinite rays} if every point belongs to an isometric image of
$[0,\infty)$ starting at $o$. A \emph{point cutset} is a finite set
meeting every such ray, with no distinct points $x,y$ satisfying
$x\in[o,y]$. For Brownian motion on $\mathcal T$ with length
measure and the energy normalization \eqref{eq:brownian-energy},
define $D_G(\mathcal T,o)$ by \eqref{eq:rho} and \eqref{eq:DG},
using its Green function and point cutsets. Define
$\ac(\mathcal T)$ by \eqref{eq:critical} for its Brownian loop soup.

\begin{corollary}
\label{cor:real-tree}
Suppose $\mathcal T$ is proper, is spanned by its infinite rays,
and its Brownian motion with length measure is transient, with no
killing. Then $D_G(\mathcal T,o)$ is independent of the root and
\begin{equation}
 0\le D_G(\mathcal T,o)\le2,\qquad
 \ac(\mathcal T)=1-\frac{D_G(\mathcal T,o)}2.
 \label{eq:real-tree-classification}
\end{equation}
At intensity one the whole tree is almost surely a single cluster.
\end{corollary}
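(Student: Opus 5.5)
The plan is to reduce Corollary~\ref{cor:real-tree} to Theorem~\ref{thm:classification} by finding a locally finite weighted skeleton of $\mathcal T$. Because $\mathcal T$ is proper and spanned by its infinite rays, the branch points form a closed discrete set. Indeed, any bounded ball is compact, and a compact subtree of a real tree all of whose points lie on rays from $o$ can contain only finitely many branch points of bounded depth with a positive amount of branching. I will check that accumulation of branch points inside a compact ball would force infinitely many disjoint ray segments leaving a compact set. Properness forbids this: picking one point at distance $1$ beyond the ball along each such ray gives infinitely many points at pairwise distance $\ge 2$ in a larger ball, which has no convergent subsequence.

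The vertex set $V$ will consist of $o$, the branch points, and additional points placed along each segment so that every edge has length at most $1$. Since $\mathcal T$ is spanned by rays, there are no leaves other than possibly $o$, so each point other than $o$ has degree at least two. Each edge of length $\ell_e$ gets conductance $c_e=(2\ell_e)^{-1}$. This produces a locally finite tree $T$ whose cable system is isometric to $\mathcal T$. The Brownian motion with length measure and energy \eqref{eq:brownian-energy} then coincides with the cable diffusion, so transience and zero killing transfer and \eqref{eq:weighted-setting} holds. The Green function, the loop measure \eqref{eq:loop-measure} and hence $\ac$ are intrinsic to the metric tree, so $\ac(\mathcal T)=\ac(T)$.

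The main remaining point is to identify $D_G(\mathcal T,o)$, defined with point cutsets, with $D_G(T,o)$ defined with vertex cutsets. If $o$ is not a vertex, I first add it, which is allowed by skeleton independence in Theorem~\ref{thm:classification}. Vertex cutsets are point cutsets, so the point-cutset infimum is at most the vertex one. For the reverse, replace each point $x$ of a point cutset by the vertex $v$ at the lower endpoint of its cable, or by $x$ itself if it is a vertex. Collisions only decrease the count, and after removing descendants we get a vertex cutset.

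The hard step is comparing the sums term by term. Since $\rho_o$ is multiplicative along geodesics by \eqref{eq:multiplicative}, $\rho_o(x)\le\rho_o(v)$, which goes the wrong direction. So I would instead map $x$ to the children vertices $w$ above it. Then $\rho_o(w)=\rho_o(x)\rho(x,w)\le\rho_o(x)$, but $x$ may have several children in the subtree, and the factor $\rho(x,w)$ must be bounded below. When $x$ lies in the interior of an edge there is exactly one child, and $\rho(x,w)\ge\rho(v,w)$. Thus $\rho_o(x)^s\ge\rho_o(w)^s$, and $\sum_{\text{points}}\rho_o^s\ge\sum_{\text{vertex cutset}}\rho_o^s$ after removing redundancies. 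Points lying at vertices are already vertices. This yields equality of the two infima being positive, and hence equality of the dimensions.

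Root independence, the bounds $0\le D_G\le 2$, and the single-cluster statement at intensity one then follow directly from Theorem~\ref{thm:classification}.
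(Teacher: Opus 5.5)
Your proposal is correct and follows essentially the paper's route. You build a locally finite skeleton containing $o$ with $c_e=(2\ell_e)^{-1}$, so that the diffusion, Green function and loop soup coincide; you then match the point-cutset and vertex-cutset infima by pushing each interior point to the endpoint of its edge farther from $o$, exactly as in Lemma~\ref{lem:point-cutsets}; Theorem~\ref{thm:classification} then applies. The differences are minor: you derive local finiteness of the branch points directly from properness (rays diverging inside a ball give infinitely many points at mutual distance at least two in a larger ball) where the paper cites Athreya--Eckhoff--Winter, and your worry about bounding $\rho(x,w)$ from below is unnecessary, since $\rho_o(w)=\rho_o(x)\rho(x,w)\le\rho_o(x)$ is all the comparison needs.
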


Under these assumptions, the skeleton result of
Athreya--Eckhoff--Winter~\cite[Lemma~7.1 and Remark~7.2]{AEW}
reduces the real-tree statement to \eqref{eq:classification}.
The equality of the vertex and point cutset infima is recorded in
\eqref{eq:point-cutsets}.

All proofs are collected in Section~\ref{sec:proofs}.
Subsection~\ref{sec:electrical} proves the Green-function identities.
Subsections~\ref{sec:occupation} and~\ref{sec:cutsets} establish the
connection estimates and prove Theorem~\ref{thm:classification}.
Subsection~\ref{sec:metric} proves Corollary~\ref{cor:real-tree} and
Propositions~\ref{prop:weighted-example} and~\ref{prop:zero-threshold}.
Subsection~\ref{sec:unit} proves Theorem~\ref{thm:half-analytic},
the pruning and ray-capacity criteria, and
Corollaries~\ref{cor:symmetric}, \ref{cor:fixed-branching}
and~\ref{cor:gff-separation}.
Subsection~\ref{sec:GW-proof} proves Theorem~\ref{thm:GW}.

\section{Proofs}
\label{sec:proofs}

The probabilistic input to the classification consists of the
two-point formula \eqref{eq:beta} and the branching-point comparison
\eqref{eq:quasi}. To obtain them, we use the occupation field, which
records the total local time of the loops at each point. After
normalization by the diagonal Green function, its restriction to a
geodesic is a diffusion related to a squared-Bessel process.
The interval description is classical~\cite{Lupu18}, and
Ray--Knight identities at general intensities on trees are studied
by Li and Zheng~\cite{LZ}. We give the finite-dimensional calculation
that identifies the normalized field and its conditional independence
across branches. The squared-Bessel hitting law then gives
\eqref{eq:beta}; concavity of the conditional connection probabilities
gives \eqref{eq:quasi}, with a constant depending only on the intensity.

These estimates allow us to apply the classical boundary-capacity
method on trees. The two-point function is comparable to a power
of the normalized Green function, so first and second moments give
cutset and finite-energy criteria in the metric
\eqref{eq:green-boundary-metric}. Lemma~\ref{lem:frostman} identifies
the cutset exponent with Hausdorff dimension and constructs a
finite-energy flow below this dimension by max-flow/min-cut.
We include the energy estimate because it
applies without uniform degree or conductance bounds. Finally,
\eqref{eq:green-resistance} relates the normalized Green function
to a recurrent network and gives the universal bound $D_G\le2$.

\subsection{Green distance and electrical resistance}
\label{sec:electrical}

We begin with identities relating the Green function to hitting
probabilities and electrical resistance. The recurrence statement in
Lemma~\ref{lem:green-transform} will give the bound $D_G\le2$ through
\eqref{eq:energy-two}.

For the hitting probabilities $H(u,v)$ introduced above, the Green
hitting identity and symmetry give
\begin{equation}
 g(u,v)=H(u,v)g(v,v),\qquad
 \rho(u,v)^2=H(u,v)H(v,u).
 \label{eq:hitting}
\end{equation}
If $v$ lies on the path from $u$ to $w$, all trajectories from $u$ to
$w$ hit $v$, so
\begin{equation}
 \rho(u,w)=\rho(u,v)\rho(v,w).
 \label{eq:multiplicative}
\end{equation}
The same identity holds for cable points. For $u\ne v$, the Green
matrix on $\{u,v\}$ is strictly positive definite, so
$0<\rho(u,v)<1$. Consequently $\rho_o$ decreases strictly along
each geodesic away from $o$.

For distinct rays $\xi,\eta$, set $v=\xi\wedge\eta$ and take
$x\in\xi$, $y\in\eta$ beyond $v$. Applying
\eqref{eq:multiplicative} to the three Green functions in the ratio
gives
$g(x,y)/(g(o,x)g(o,y))=g(v,v)/g(o,v)^2$.
The ratio is already constant once $x,y$ lie beyond $v$, proving
\eqref{eq:green-naim}.

Recall that $C_x^+$ is the effective conductance from $x$ to
infinity through $T_x$, which may vanish.

\Needspace{12\baselineskip}
\begin{lemma}
\label{lem:green-transform}
Set $g_o=g(o,o)$ and
\begin{equation}
 h(x)=\frac{g(o,x)}{g_o}=H(x,o),\qquad
 \widehat c_{xy}=c_{xy}h(x)h(y).
 \label{eq:green-transform}
\end{equation}
The network $(T,\widehat c)$ is recurrent. For every vertex $x$,
\begin{equation}
 \rho_o(x)^{-2}
 =1+\frac1{g_o}\sum_{e\in[o,x]}\frac1{\widehat c_e}.
 \label{eq:green-resistance}
\end{equation}
In particular,
\begin{equation}
 \rho_o(\xi_n)\downarrow0\qquad\text{for every }\xi\in\partial T.
 \label{eq:ray-correlation-decay}
\end{equation}
\end{lemma}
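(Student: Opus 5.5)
The plan is to treat $h$ as a Doob $h$-transform and check the three assertions in turn: first the resistance formula \eqref{eq:green-resistance} by telescoping along $[o,x]$, then recurrence by exhibiting a bounded harmonic function for $(T,\widehat c)$ that is not constant, and finally the ray limit from recurrence. Throughout I use that $h(x)=H(x,o)$ is harmonic off $o$ for the original walk. I also use that $h>0$, and that for a tree $h(x)=\prod_{e\in[o,x]}H(\cdot)$ along the path. The latter follows from \eqref{eq:multiplicative} applied to $g(o,\cdot)$.

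For \eqref{eq:green-resistance}, take an edge $x^-x$ with $x\ne o$. From \eqref{eq:hitting} we have $\rho_o(x)^2=H(o,x)H(x,o)=g(o,x)^2/(g_og(x,x))$, so it suffices to show
\[
\frac{g(x,x)}{g(o,x)^2}-\frac{g(x^-,x^-)}{g(o,x^-)^2}=\frac1{c_{x^-x}\,h(x^-)h(x)\,g_o}.
\]
The sum then telescopes from $\rho_o(o)^{-2}=1$. To get this one-step identity, I would use the electrical description. Write $R=g(x,x)$ and $R^-=g(x^-,x^-)$. Let $a$ be the effective resistance to infinity from $x^-$ avoiding the edge to $x$, and let $b=1/C_x^+$. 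Then $R^-=(1/a+1/(r+b))^{-1}$ with $r=1/c_{x^-x}$, and $R=(1/b+1/(r+a))^{-1}$. The hitting probabilities are $H(x,x^-)=a/(r+a)$ and $H(x^-,x)=b/(r+b)$, so that $g(o,x)=g(o,x^-)H(x^-,x)$. Substituting these, the identity reduces to an algebraic check, which I expect to be routine: both sides should equal $r(r+a+b)/(\dots)$ after clearing denominators. The case $C_x^+=0$ is harmless, since then $b=\infty$ and the formula degenerates consistently.

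For recurrence, the transformed walk is the original walk conditioned via $h$, so its voltages correspond to $u/h$ for original voltages $u$. I would argue directly with effective resistance. The effective resistance from $o$ to infinity in $\widehat c$ should be infinite. Consider the function $f=1/h-1$ on vertices. By harmonicity of $g(\cdot,o)$ off $o$, $f$ is $\widehat c$-harmonic off $o$, and we have $f(o)=0$. Its $\widehat c$-energy on a finite ball equals the boundary flux, computed as $\sum\widehat c_{xy}(f(y)-f(x))$ across edges at $o$, which is $g_o^{-1}$ times the unit current. If the $\widehat c$ network were transient, the equilibrium potential would be bounded. I would instead show unboundedness: $f\to\infty$ along every ray, because $h(\xi_n)=H(\xi_n,o)\to0$ by transience along rays of positive escape. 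This has to hold along \emph{every} ray, including rays where $T_x$ is recurrent. A cleaner route is the standard fact that the $h$-transform of a walk by the function $H(\cdot,o)$ returns to $o$ with probability one. Indeed, the transformed walk is the original walk conditioned to hit $o$, and from $o$ it is conditioned to return, so it is recurrent. I would make this rigorous through the Green function of the transformed chain, $\widehat g(x,y)=g(x,y)h(y)/h(x)$ up to normalization, and verify $\widehat g(o,o)=\infty$. Alternatively, $1/\widehat c$-resistance along any path equals $g_o(\rho_o^{-2}-1)$ by \eqref{eq:green-resistance}, and one shows the cutset resistances diverge.

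The ray statement \eqref{eq:ray-correlation-decay} is where I expect the main obstacle. Monotonicity is immediate from \eqref{eq:multiplicative} and $\rho<1$. For the limit zero, I need $\sum_{e\in\xi}1/\widehat c_e=\infty$ along each single ray, which recurrence alone gives only for cutsets, not for individual rays. I would therefore argue directly. Suppose the series converged along $\xi$, so that $\rho_o(\xi_n)\to\rho_\infty>0$. By \eqref{eq:hitting} this forces both $H(\xi_n,o)$ and $H(o,\xi_n)$ to stay bounded below. Then $g(\xi_n,\xi_n)=g(o,\xi_n)/H(o,\xi_n)\le g_o/c$ is bounded. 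Properness means the cable lengths along $\xi$ sum to infinity, so the resistance along the ray diverges. Meanwhile, returning to $o$ from $\xi_n$ with probability bounded below, while escaping through the tail is cheap, contradicts $H(o,\xi_n)\ge c$. Concretely, $H(o,\xi_n)\le g(o,o)/(\text{resistance from }o\text{ to }\xi_n)$ tends to zero once the path resistance diverges, since all current to $\xi_n$ flows through $[o,\xi_n]$. I would make this last inequality precise as the key step.
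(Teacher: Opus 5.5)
Your derivation of \eqref{eq:green-resistance} follows the paper: a one-step identity on the edge $x^-x$ from the series and parallel laws, then telescoping from $\rho_o(o)=1$. Your recurrence argument, in its ``cleaner route'' (the transformed walk is the original walk conditioned to hit $o$), is also the paper's.

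There are two slips in the first part. In the paper's convention ($H(u,v)$ is the probability of visiting $v$ from $u$), $H(x,x^-)=b/(r+b)$ and $H(x^-,x)=a/(r+a)$, the reverse of what you wrote. The relation you need is $g(o,x)=g(o,x^-)H(x,x^-)$; your values happen to make the relation you use numerically correct. More importantly, the target identity is
\[
 \frac{g(x,x)}{g(o,x)^2}-\frac{g(x^-,x^-)}{g(o,x^-)^2}
 =\frac{r(r+b)}{b\,g(o,x^-)^2}
 =\frac{r}{g(o,x^-)g(o,x)}
 =\frac1{g_o^2\,c_{x^-x}h(x^-)h(x)},
\]
with $g_o^2$ rather than $g_o$. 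As you stated it, the check would fail.

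For recurrence, make the conditioning rigorous by summing over paths. For $x\ne o$, the $\widehat P$-weight of a path from $x$ that first reaches $o$ at its last step is its $P$-weight divided by $h(x)$. So the transformed walk from $x$ hits $o$ with probability $H(x,o)/h(x)=1$. Drop your other two ideas:
\begin{itemize}
\item An unbounded nonnegative $\widehat c$-harmonic function off $o$ does not exclude transience. Also, $1/h-1$ need not tend to infinity along rays: $h\equiv1$ on a recurrent half-line attached at $o$.
\item $g(x,y)h(y)/h(x)$ is the Green function of the unnormalized kernel $P(x,y)h(y)/h(x)$. At $o$ this kernel has total mass equal to the return probability, since $h$ is strictly superharmonic there. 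It therefore gives $\widehat g(o,o)=g_o<\infty$, which proves nothing.
\end{itemize}

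For \eqref{eq:ray-correlation-decay} you take a different route, resting on a false premise. Recurrence does give divergence along every single ray: if $\sum_{e\in\xi}\widehat c_e^{-1}<\infty$, the unit flow carried by $\xi$ has finite $\widehat c$-energy, so $(T,\widehat c)$ would be transient by the finite-energy criterion. That is the paper's whole argument.

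Your replacement is nevertheless correct. Take the unit current from $o$ into the grounded set $\{\xi_n,\infty\}$. The current entering $\xi_n$ is $H(o,\xi_n)$, and it bounds the current in every edge of $[o,\xi_n]$ from below, because side branches only drain current. The potential at $o$ is at most $g_o$ by Rayleigh monotonicity. Hence $H(o,\xi_n)\sum_{e\in[o,\xi_n]}c_e^{-1}\le g_o$, and properness makes the path resistance diverge.

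It is simpler to read this off \eqref{eq:green-resistance} directly. Since $h\le1$, we have $\widehat c_e\le c_e$, so $\rho_o(\xi_n)^{-2}\ge1+g_o^{-1}\sum_{e\in[o,\xi_n]}c_e^{-1}\to\infty$.

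The trade-off is that your route needs properness but not recurrence, while the paper's needs no properness. For example, a single ray with $c_{n,n+1}=2^n$ has finite cable length, yet $\rho_o(n)^2=2^{-n}$.
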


\begin{proof}
The function $h$ is positive, equals one at $o$, and is harmonic
off $o$. For $x\ne o$, the transition probabilities of the
transformed network satisfy
\begin{equation}
 \widehat P(x,y)=P(x,y)\frac{h(y)}{h(x)}\qquad(y\sim x).
 \label{eq:transformed-transition}
\end{equation}
Up to its first visit to $o$, this is the original walk conditioned
to hit $o$. Every departure from $o$ therefore returns almost surely,
which proves recurrence.

Fix an edge from $p=x^-$ to $x$. The voltage
formula and the Green function killed on hitting $p$ give
\begin{equation}
 H(x,p)=\frac{c_{px}}{c_{px}+C_x^+}=\frac{h(x)}{h(p)},\qquad
 g(x,x)-\frac{g(p,x)^2}{g(p,p)}=\frac1{c_{px}+C_x^+}.
 \label{eq:edge-hitting-green}
\end{equation}
Both identities in \eqref{eq:edge-hitting-green} follow by finite
network exhaustion, including when $C_x^+=0$. Since
$g(p,x)=H(x,p)g(p,p)$, division yields
\begin{equation}
 \frac{g(x,x)}{h(x)^2}-\frac{g(p,p)}{h(p)^2}
 =\frac1{c_{px}h(p)h(x)}=\frac1{\widehat c_{px}}.
 \label{eq:green-increment}
\end{equation}
Summing \eqref{eq:green-increment} along $[o,x]$ and using
$g(x,x)/h(x)^2=g_o\rho_o(x)^{-2}$ proves
\eqref{eq:green-resistance}.
Every infinite ray has infinite total resistance in a recurrent
network: otherwise the unit flow supported on that ray would have
finite energy, contradicting the electrical transience criterion
\cite[Theorem~2.11]{LP}.
Equation~\eqref{eq:green-resistance} therefore gives
\eqref{eq:ray-correlation-decay}.
\end{proof}

\subsection{Occupation fields and connection probabilities}
\label{sec:occupation}

We first describe the occupation field along geodesics and its
conditional independence across branches. These properties will give
the connection formula \eqref{eq:beta} and the comparison
\eqref{eq:quasi}.

Let $L_t^z$ be Brownian local time at $z$, normalized with respect to
the length measure $m$: for the Brownian diffusion $W$ and every
nonnegative measurable $f$,
\begin{equation}
 \int_0^t f(W_s)\,ds=\int_{\wt T}f(z)L_t^z\,m(dz)
 \label{eq:local-time-normalization}
\end{equation}
up to its lifetime. If $t_\gamma$ is the duration of a loop $\gamma$
and $\mathcal L_\alpha$ is the Poisson soup, its occupation field is
\begin{equation}
 \ell_\alpha(z)=\sum_{\gamma\in\mathcal L_\alpha}
                       L_{t_\gamma}^z(\gamma),\qquad z\in\wt T.
 \label{eq:occupation}
\end{equation}
By~\cite[Lemmas~2.2 and~2.3]{Lupu16}, this field has a continuous
version whose positive components are precisely the loop clusters.
We use that version throughout. In particular, on a tree,
\begin{equation}
 \{u\leftrightarrow v\}
 =\{\ell_\alpha(z)>0\text{ for all }z\in[u,v]\}
 \quad\text{almost surely}.
 \label{eq:positive-path}
\end{equation}
For distinct cable points $z_1,\ldots,z_k$ and numbers
$\lambda_1,\ldots,\lambda_k\ge0$, set
$G=(g(z_i,z_j))_{1\le i,j\le k}$ and
$D=\operatorname{diag}(\lambda_1,\ldots,\lambda_k)$.
The occupation-field identity~\cite[Chapter~4]{LeJan} gives
\begin{equation}
 \E_\alpha^T\exp\left\{-\sum_{i=1}^k\lambda_i\ell_\alpha(z_i)\right\}
 =\det(I+GD)^{-\alpha},
 \label{eq:laplace}
\end{equation}
where $I$ is the identity matrix of the indicated size. In this
normalization the occupation isomorphism at intensity $1/2$ is
$\ell_{1/2}\overset d=\varphi^2/2$.

For a fixed intensity $\alpha>0$, write
\begin{equation}
 X(z)=\frac{\ell_\alpha(z)}{g(z,z)},\qquad z\in\wt T.
 \label{eq:normalized-occupation}
\end{equation}
Each $X(z)$ has law $\Ga(\alpha,1)$, the Gamma distribution with
density $r^{\alpha-1}e^{-r}/\Gamma(\alpha)$ on $(0,\infty)$.
Let $R=(\rho(z_i,z_j))_{1\le i,j\le k}$. Rescaling the variables in
\eqref{eq:laplace} and conjugating by the diagonal matrix with
entries $\sqrt{g(z_i,z_i)}$ gives
\begin{equation}
 \E_\alpha^T\exp\left\{-\sum_{i=1}^k\lambda_iX(z_i)\right\}
 =\det(I+RD)^{-\alpha}.
 \label{eq:normalized-laplace}
\end{equation}

\paragraph*{The occupation field on a tree.}

For the corresponding occupation-field diffusion on an interval, see
Lupu~\cite[Proposition~4.6 and equation~(4.2.4)]{Lupu18}.
Conditional independence across branches follows from the excursion
decomposition in Li and Zheng~\cite[Proposition~2.2 and Section~3.1.1]{LZ}.
We give a direct proof in our normalization.

\begin{lemma}
\label{lem:branching}
For any cable point $a$, conditional on $X(a)$, the fields in the components of
$\wt T\setminus\{a\}$ are independent. Along a geodesic directed
away from $a$, with parameter $t=-\log\rho(a,\cdot)^2$, the field
has the law of the stationary diffusion, restricted to the range of
this parameter,
\begin{equation}
 dX_t=\sqrt{2X_t}\,dB_t+(\alpha-X_t)\,dt,\qquad
 X_0\sim\Ga(\alpha,1).
 \label{eq:CIR}
\end{equation}
Here $B$ is a standard one-dimensional Brownian motion, independent
of $X_0$.
\end{lemma}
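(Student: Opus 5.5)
We identify the finite-dimensional distributions of $X$ through the normalized Laplace formula \eqref{eq:normalized-laplace}, and compare them with those of an explicitly constructed tree-indexed Markov field. Since the tree is fixed, we may enlarge any finite set of cable points $z_1,\dots,z_k$ by adding $a$ and all branch points of the subtree they span. The added points receive $\lambda_i=0$, so the left side of \eqref{eq:normalized-laplace} is unchanged.

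\textbf{Construction of the candidate field.} On the finite subtree spanned by the enlarged set, rooted at $a$, we build a field $Y$ as follows.
\begin{itemize}
\item Set $Y(a)\sim\Ga(\alpha,1)$.
\item Along each edge $[w,w']$ of the spanned subtree, directed away from $a$, let $Y$ evolve as the diffusion \eqref{eq:CIR} for time $t(w,w')=-\log\rho(w,w')^2$.
\item At each branch point, run the different outgoing directions independently, conditionally on the value at that point.
\end{itemize}
By the multiplicativity \eqref{eq:multiplicative}, these times add along geodesics. Hence the time parameter along any geodesic from $a$ is exactly $-\log\rho(a,\cdot)^2$. Consistency under adding points is the Markov property of \eqref{eq:CIR}. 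Stationarity of $\Ga(\alpha,1)$ for \eqref{eq:CIR} makes the choice of root irrelevant. The CIR process is reversible, so reversing an edge preserves the law.

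Thus the lemma reduces to showing that the joint law of $(Y(z_i))$ equals that of $(X(z_i))$. Laplace transforms determine laws on $[0,\infty)^k$, so it suffices to show that $Y$ satisfies \eqref{eq:normalized-laplace}. Continuity of both fields then upgrades equality of finite-dimensional laws to equality of the fields along geodesics. It also gives the conditional independence across components of $\wt T\setminus\{a\}$, which holds for $Y$ by construction.

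\textbf{Step 1: the Laplace transform of $Y$ has the form $A^{-\alpha}$.} We use the explicit CIR transform, valid because \eqref{eq:CIR} is an affine process:
\[
\E\bigl[e^{-\lambda X_t}\mid X_0=x\bigr]=(1+\lambda(1-e^{-t}))^{-\alpha}\exp\Bigl\{-\frac{x\lambda e^{-t}}{1+\lambda(1-e^{-t})}\Bigr\}.
\]
We compute the joint Laplace transform of $Y$ by integrating out the vertices of the spanned tree, leaves first. Each integration produces:
\begin{itemize}
\item a factor $(1+c)^{-\alpha}$, where $c$ depends only on the $\lambda_i$ and the $\rho$'s, and
\item an exponential $e^{-c'Y(w)}$ at the parent, where $c'$ is likewise independent of $\alpha$.
\end{itemize}
The final integration against $\Ga(\alpha,1)$ at $a$ also gives a factor of the form $(1+c)^{-\alpha}$. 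Hence the transform equals $A^{-\alpha}$, where $A=A(\rho,\lambda)>0$ does not depend on $\alpha$. The right side of \eqref{eq:normalized-laplace} is also of the form $B^{-\alpha}$ with $B=\det(I+RD)$ independent of $\alpha$. So it suffices to check $A=B$ at the single value $\alpha=1/2$.

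\textbf{Step 2: the check at $\alpha=1/2$.} Let $\psi$ be the tree-indexed Gaussian Markov field with $\psi(a)\sim N(0,1)$ and
\[
\psi(w')=\rho(w,w')\psi(w)+\sqrt{1-\rho(w,w')^2}\,\xi_{w'},
\]
with independent standard normals $\xi$. This is an Ornstein--Uhlenbeck process in the time $t$ above. By \eqref{eq:multiplicative} its covariance matrix is $R$. Moreover, $\psi^2/2$ is exactly the CIR diffusion \eqref{eq:CIR} with $\alpha=1/2$. The Gaussian integral gives
\[
\E\exp\Bigl\{-\sum_i\lambda_i\psi(z_i)^2/2\Bigr\}=\det(I+RD)^{-1/2}.
\]
This proves $A=B$.

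\textbf{Main obstacle.} I expect the main obstacle to be the bookkeeping in Step 1. One must verify inductively that each integration leaves an expression that is still exponential-affine in the parent value, with coefficients independent of $\alpha$. At branch points, several children's exponents add before the parent is integrated, and this has to be tracked carefully. If the induction proves awkward, an alternative is to verify $A=B$ at all half-integers $\alpha=n/2$ using sums of $n$ independent squared Ornstein--Uhlenbeck fields. That route avoids the structural claim but needs an analytic-continuation argument in $\alpha$.
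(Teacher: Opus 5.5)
Your proposal is correct and shares most of the paper's skeleton. Both use the same candidate field: $\Ga(\alpha,1)$ at $a$, with the kernels of \eqref{eq:CIR} run independently along the edges of the spanned tree after its branch points are added. Both integrate out leaves one at a time using the explicit CIR transform, and both pass from finite-dimensional laws to the continuous field and to conditional independence in the same way.

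The real difference is how you match the result with $\det(I+RD)^{-\alpha}$. The paper shows that the determinant obeys the same leaf recursion by a row operation, namely \eqref{eq:leaf}, and induction then settles every $\alpha$ at once. You note instead that the recursion produces $A^{-\alpha}$ with $A$ independent of $\alpha$. This leaves the single identity $A=\det(I+RD)$, which you check at $\alpha=1/2$ using the Ornstein--Uhlenbeck Markov field with covariance $R$ (by \eqref{eq:multiplicative}) and the Gaussian integral. This is a finite-dimensional form of $\ell_{1/2}\overset d=\varphi^2/2$ and explains where the determinant comes from.

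It costs one identification you state only loosely: that the tree-indexed field $\psi^2/2$ has the law of your candidate field at $\alpha=1/2$. Along a single edge this is the standard fact that half the square of the Ornstein--Uhlenbeck process solves \eqref{eq:CIR} with $\alpha=1/2$. At branch points you also need that the conditional law of the squared branches given $\psi(w)$ depends only on $\psi(w)^2$. The simplest fix is to run the leaf recursion directly on $\psi$. There, $\E[e^{-\lambda\psi(l)^2/2}\mid\psi(p)]$ and $\E[e^{-\lambda\psi(a)^2/2}]$ give exactly the factors of \eqref{eq:CIR-LT} and of $\Ga(1/2,1)$. So the same $A$ appears and no path-level identification is needed.

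The paper's route is purely algebraic and needs no Gaussian input; yours replaces the matrix manipulation with a probabilistic reduction. The step you flag as the main obstacle is not one: the bookkeeping follows immediately from the transform, and the half-integer fallback is unnecessary.
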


\begin{proof}
For $0<q\le1$, let $Q_q(z,\cdot)$ be the transition law of
\eqref{eq:CIR} after time $-\log q$, started at $z\ge0$, and write
$Q_qf(z)=\int f(y)Q_q(z,dy)$. Its Laplace transform is
\begin{equation}
 Q_q(e^{-\lambda\,\cdot})(z)
 =[1+(1-q)\lambda]^{-\alpha}
   \exp\left\{-\frac{qz\lambda}{1+(1-q)\lambda}\right\},\qquad\lambda\ge0.
 \label{eq:CIR-LT}
\end{equation}
Indeed, if $Y$ solves the stochastic differential equation
\begin{equation}
 dY_s=\sqrt{2Y_s}\,dB_s+\alpha\,ds,
 \label{eq:BESQ}
\end{equation}
driven by a standard Brownian motion, then $2Y$ is a squared-Bessel
process of dimension $2\alpha$ and the time change
\begin{equation}
 X_t=e^{-t}Y_{e^t-1}
 \label{eq:bessel-time-change}
\end{equation}
solves \eqref{eq:CIR} in law, with the same initial value.
Formula~\eqref{eq:CIR-LT} follows from the squared-Bessel
transition transform; see~\cite[Sections~4.1--4.2]{Lupu18}.
Integration against $\Ga(\alpha,1)$ shows that this law is invariant.

Fix finitely many points, including $a$. Their spanning tree is
the union of the geodesics joining these points, and we add its branch
points to the finite set. By \eqref{eq:multiplicative}, the normalized
Green matrix $R$ on this enlarged finite set has entries given by
products of correlations across the edges of its spanning tree.
Starting at $a$ with law $\Ga(\alpha,1)$, use the kernels $Q_q$
independently on the branches, where $q$ is the squared correlation
across an edge. We verify that the resulting field has transform
$\det(I+RD)^{-\alpha}$.

Let $l$ be a leaf with parent $p$, and put $q=R_{pl}^2$ and
$A=1+(1-q)\lambda_l$. Let $R_0$ and $D_0$ be the matrices
with the coordinate $l$ deleted, and let $e_p$ be the coordinate
vector at $p$ in the remaining space. By \eqref{eq:CIR-LT},
integrating out $X(l)$ multiplies the transform by $A^{-\alpha}$
and replaces $\lambda_p$ by $\lambda_p+q\lambda_l/A$. The same
recursion holds for the determinant: subtracting $\sqrt q$ times
row $p$ from row $l$ and eliminating the leaf coordinate gives
\begin{equation}
 \det(I+RD)
 =A\det\left(I+R_0\left[
        D_0+\frac{q\lambda_l}{A}e_pe_p^{\mathsf T}\right]\right).
 \label{eq:leaf}
\end{equation}
Induction in \eqref{eq:leaf} proves the transform identity, and
\eqref{eq:normalized-laplace} identifies the finite-dimensional laws.
Continuity on the cables then identifies the fields on countable
dense subsets, and hence their laws as continuous functions.
The independent construction on the branches proves the conditional
independence assertion.
\end{proof}

\paragraph*{Connections along a geodesic.}

For $b,c>0$ and $q\in[0,1]$, define the beta function and the
regularized incomplete beta function by
\begin{equation}
 B(b,c)=\int_0^1 t^{b-1}(1-t)^{c-1}\,dt,\qquad
 I_q(b,c)=\frac1{B(b,c)}\int_0^q t^{b-1}(1-t)^{c-1}\,dt.
 \label{eq:beta-functions}
\end{equation}

\begin{proposition}
\label{prop:two-point}
Let $u\ne v$ be cable points and $0<\alpha<1$. Then
\begin{equation}
 p_\alpha(u,v)=I_{\rho(u,v)^2}(1-\alpha,\alpha).
 \label{eq:beta}
\end{equation}
More precisely, for $q=\rho(u,v)^2$ and $z>0$,
\begin{equation}
 \Pp_\alpha^T(u\leftrightarrow v\mid X(u)=z)
 =F_{\alpha,q}(z):=\frac1{\Gamma(1-\alpha)}
       \int_0^{zq/(1-q)}r^{-\alpha}e^{-r}\,dr.
 \label{eq:conditional-connection}
\end{equation}
With $F_{\alpha,q}(0)=0$, this function is nonnegative, increasing
and concave on $[0,\infty)$. If
$\beta=2(1-\alpha)$, there are constants
$0<c_\alpha\le C_\alpha<\infty$, depending only on $\alpha$, such that
\begin{equation}
 c_\alpha\rho(u,v)^\beta\le p_\alpha(u,v)
 \le C_\alpha\rho(u,v)^\beta.
 \label{eq:power}
\end{equation}
The bounds in \eqref{eq:power} also hold for $u=v$. For $\alpha\ge1$,
\begin{equation}
 \Pp_\alpha^T(\wt T\text{ is a single loop cluster})=1.
 \label{eq:full-connection}
\end{equation}
\end{proposition}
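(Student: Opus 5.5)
By \eqref{eq:positive-path} and Lemma~\ref{lem:branching}, the event $\{u\leftrightarrow v\}$ is the event that the stationary diffusion \eqref{eq:CIR}, run along $[u,v]$ with parameter $t=-\log\rho(u,\cdot)^2$, stays positive on $[0,T_*]$ with $T_*=-\log q$ and $q=\rho(u,v)^2$. The plan is to compute this probability through the squared-Bessel representation \eqref{eq:bessel-time-change}. Conditionally on $X(u)=z$, we have $X_t=e^{-t}Y_{e^t-1}$, where $Y$ solves \eqref{eq:BESQ} with $Y_0=z$. So $X$ stays positive on $[0,T_*]$ exactly when $Y$ does not hit zero before time $s_*=e^{T_*}-1=(1-q)/q$.

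Since $2Y$ is a squared-Bessel process of dimension $2\alpha<2$, it hits zero almost surely. The hitting time $\tau_0$ is classical: time reversal (or the explicit last-exit/first-hitting law) gives $\tau_0\overset d=z/\Gamma$ with $\Gamma\sim\Ga(1-\alpha,1)$. This uses the standard fact that the first hitting time of zero for a squared-Bessel process of dimension $\delta<2$ started at $x$ is distributed as $x/(2\,\Ga(1-\delta/2,1))$, which in our normalization becomes $z/\Gamma$. Then
\[
\Pp(\tau_0>s_*)=\Pp\bigl(\Gamma<z/s_*\bigr)=\Pp\bigl(\Gamma<zq/(1-q)\bigr),
\]
which is \eqref{eq:conditional-connection}. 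The main obstacle I expect is the normalization check: getting the factor $2$ in the squared-Bessel hitting law right, and justifying that positivity of the continuous occupation field on the closed segment is the same as non-hitting of zero. The latter follows from path continuity together with the fact that $Y$ does not touch zero without hitting it.

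The properties of $F_{\alpha,q}$ follow directly: $F_{\alpha,q}$ is a Gamma distribution function composed with a linear map, and its density $r^{-\alpha}e^{-r}$ is decreasing, which gives concavity.

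For \eqref{eq:beta}, integrate \eqref{eq:conditional-connection} against the $\Ga(\alpha,1)$ law of $X(u)$. Write $p=\Pp(\Gamma'<c\,Z)$ with $\Gamma'\sim\Ga(1-\alpha,1)$, $Z\sim\Ga(\alpha,1)$ independent, and $c=q/(1-q)$. The ratio $\Gamma'/(\Gamma'+Z)$ is $\mathrm{Beta}(1-\alpha,\alpha)$, and the event $\Gamma'<cZ$ is equivalent to $\Gamma'/(\Gamma'+Z)<c/(1+c)=q$. This yields $I_q(1-\alpha,\alpha)$.

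For \eqref{eq:power}, use $I_q(1-\alpha,\alpha)=B^{-1}\int_0^q t^{-\alpha}(1-t)^{\alpha-1}dt$. For $q\le1/2$ this is comparable to $q^{1-\alpha}/((1-\alpha)B)$. For $q\in[1/2,1]$ both $I_q$ and $q^{1-\alpha}$ lie in fixed positive intervals. Since $q^{1-\alpha}=\rho^{\beta}$, the bounds hold with constants depending only on $\alpha$, and the case $u=v$ is trivial.

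For $\alpha\ge1$, the dimension $2\alpha\ge2$ means $Y$ started at $z>0$ never hits zero. Since $X(u)>0$ almost surely, the field is positive on every geodesic segment from a fixed point, so $u\leftrightarrow v$ almost surely for all $u,v$. By continuity and a countable dense set, using Lemma~\ref{lem:branching} on each segment from a fixed root, $\ell_\alpha>0$ everywhere almost surely. Hence $\wt T$ is one cluster by \eqref{eq:positive-path}, which gives \eqref{eq:full-connection}.
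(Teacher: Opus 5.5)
Your proposal is correct and follows essentially the same route as the paper. You reduce connectivity along $[u,v]$, via Lemma~\ref{lem:branching} and the time change \eqref{eq:bessel-time-change}, to $Y$ avoiding zero up to time $(1-q)/q$, use the hitting law $T_0\overset{d}{=}z/G$ with $G\sim\Ga(1-\alpha,1)$, integrate against $\Ga(\alpha,1)$ to obtain the $\operatorname{Beta}(1-\alpha,\alpha)$ ratio, bound the incomplete beta integral, and use non-hitting of zero for $\alpha\ge1$. Your normalization check is right, since $2Y$ is a squared-Bessel process of dimension $2\alpha$ started at $2z$. The only cosmetic difference is that for $\alpha\ge1$ you cover $\wt T$ by the countably many segments from a fixed root instead of by the edges, which is equivalent.
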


\begin{proof}
By Lemma~\ref{lem:branching} and \eqref{eq:positive-path}, the
conditional connection probability is the probability that the
diffusion \eqref{eq:CIR}, started at $z$, avoids zero up to time
$-\log q$. By \eqref{eq:bessel-time-change}, this is the probability
that $Y$ avoids zero up to $(1-q)/q$. For $Y_0=z>0$, let
$T_0=\inf\{s\ge0:Y_s=0\}$. The squared-Bessel hitting-time formula gives
\begin{equation}
 T_0\overset d=\frac{z}{G_{1-\alpha}},
 \qquad G_{1-\alpha}\sim\Ga(1-\alpha,1);
 \label{eq:hit-zero}
\end{equation}
see~\cite[Proposition~2.9]{LawlerBessel}, applied to $\sqrt{2Y}$
with Bessel parameter $\alpha-1/2$. This proves
\eqref{eq:conditional-connection}. For $z>0$, the derivative of
$F_{\alpha,q}$ is a positive constant times
$z^{-\alpha}e^{-zq/(1-q)}$, which is decreasing. Together with
$F_{\alpha,q}(0)=0$, this proves concavity.

Unconditionally, $X(u)\sim\Ga(\alpha,1)$. Let
$G_\alpha\sim\Ga(\alpha,1)$ be independent of $G_{1-\alpha}$ from
\eqref{eq:hit-zero}. Integrating \eqref{eq:conditional-connection} gives
\begin{equation}
 p_\alpha(u,v)
 =\Pp\left(G_{1-\alpha}<\frac{qG_\alpha}{1-q}\right)
 =\Pp\left(\frac{G_{1-\alpha}}{G_\alpha+G_{1-\alpha}}<q\right).
 \label{eq:gamma-ratio}
\end{equation}
The ratio in \eqref{eq:gamma-ratio} has the
$\operatorname{Beta}(1-\alpha,\alpha)$ law, which proves \eqref{eq:beta}.

In the beta integral, $(1-t)^{\alpha-1}\ge1$. For $q\le1/2$ it
is also at most $2^{1-\alpha}$ on $[0,q]$. For $q>1/2$, use
$I_q(1-\alpha,\alpha)\le1\le2^{1-\alpha}q^{1-\alpha}$. These estimates give
\eqref{eq:power}, with constants adjusted to include $u=v$.

If $\alpha\ge1$, the squared-Bessel process started above zero
never hits zero; see~\cite[Proposition~2.1]{LawlerBessel}.
Every neighboring pair of vertices is therefore connected almost
surely. There are countably many edges, and their connecting
geodesics cover $\wt T$. This proves \eqref{eq:full-connection}.
\end{proof}

At $\alpha=1/2$, \eqref{eq:beta} reduces to
\eqref{eq:lupu-two-point}.

\paragraph*{A comparison at the branching point.}

The following elementary inequality controls the dependence produced
by the occupation at a common branching point.

\begin{lemma}
\label{lem:concavity}
Let $\alpha>0$ and let $k\ge1$ be an integer. If
$Z\sim\Ga(\alpha,1)$ and $f_1,\ldots,f_k$ are bounded
nonnegative increasing concave functions on $[0,\infty)$, then
\begin{equation}
 \E\prod_{i=1}^k f_i(Z)
 \le\frac{\E Z^k}{(\E Z)^k}\prod_{i=1}^k\E f_i(Z).
 \label{eq:concavity}
\end{equation}
\end{lemma}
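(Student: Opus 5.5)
The plan is to prove a slightly stronger statement by induction: for every integer $j\ge0$ and every $m\ge0$,
\begin{equation*}
 \E\Bigl[Z^j\prod_{i=1}^m f_i(Z)\Bigr]\le\frac{\E Z^{j+m}}{(\E Z)^m}\prod_{i=1}^m\E f_i(Z).
\end{equation*}
Taking $j=0$ and $m=k$ gives \eqref{eq:concavity}. The case $m=0$ is an identity. All expectations are finite, since the $f_i$ are bounded and $\Ga(\alpha,1)$ has all moments. The Gamma law itself should play no role beyond $Z>0$ almost surely and finite moments.

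The structural input is the following observation about a bounded nonnegative increasing concave $f$. Concavity together with $f(0)\ge0$ implies that $\varphi(z)=f(z)/z$ is nonincreasing on $(0,\infty)$: for $0<z<w$ we have $f(z)\ge (z/w)f(w)+(1-z/w)f(0)\ge (z/w)f(w)$. So we may write $f(z)=z\varphi(z)$ with $\varphi\ge0$ nonincreasing. Consequently,
\begin{equation*}
 \E f(Z)=\E Z\cdot\E_{Q_1}\varphi(Z),
\end{equation*}
where $Q_1$ is the size-biased law $dQ_1=(Z/\E Z)\,d\Pp$.

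For the induction step, put $F=\prod_{i<m}f_i$ and $f_m=Z\varphi_m$. Write
\begin{equation*}
 \E[Z^jFf_m]=\E[Z^{j+1}F]\cdot\E_\mu\varphi_m(Z),
\end{equation*}
where $d\mu\propto Z^{j+1}F(Z)\,d\Pp$. If $\E[Z^{j+1}F]=0$, there is nothing to prove. Otherwise the density of $\mu$ with respect to $Q_1$ is proportional to $Z^jF(Z)$, which is nondecreasing because each $f_i$ is increasing and nonnegative. A monotone likelihood ratio implies stochastic domination, so $\mu$ dominates $Q_1$. Since $\varphi_m$ is nonincreasing, this gives
\begin{equation*}
 \E_\mu\varphi_m\le\E_{Q_1}\varphi_m=\frac{\E f_m(Z)}{\E Z}.
\end{equation*}
Applying the induction hypothesis with $(j+1,m-1)$ to $\E[Z^{j+1}F]$ then yields the claim with exponent $(j+1)+(m-1)+\dots$, which equals $j+m$ in the moment and produces $(\E Z)^{m}$ in the denominator. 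This closes the induction.

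The only delicate point is the domination step, because the naive approach fails. If we rewrite everything under a single size-biased measure, we are left needing $\E\prod\varphi_i\le\prod\E\varphi_i$ for decreasing functions, which is the wrong direction by Chebyshev/Harris. The inductive peeling avoids this: it handles one factor at a time and uses only that the remaining product is increasing, through the monotone likelihood ratio. To make this step rigorous, I would verify the standard fact that a nondecreasing density $h$ with respect to $Q_1$ gives $\E_{Q_1}[h\psi]\le\E_{Q_1}[h]\,\E_{Q_1}[\psi]$ for nonincreasing $\psi$. This is the one-dimensional Chebyshev inequality. I would also check that $\varphi_m$ is integrable under $\mu$, which holds because $\varphi_m\le f_m(1)/z$ near zero is not needed: we only use $Z\varphi_m=f_m$ being bounded against the weight $Z^{j+1}F$.
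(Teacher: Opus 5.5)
Your argument is correct and is essentially the paper's proof. Both use that $f(z)/z$ is nonincreasing, apply the one-dimensional Chebyshev covariance inequality under the size-biased law $Z\,d\Pp/\E Z$ against the increasing weight formed by the remaining factors times a power of $Z$, and peel off one factor at a time; your $(j,m)$ induction just makes this explicit, and the garbled integrability remark at the end is harmless since $\E_\mu\varphi_m=\E[Z^jFf_m]/\E[Z^{j+1}F]$ is finite by boundedness of $f_m$ and finiteness of Gamma moments.
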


\begin{proof}
For any $f=f_i$, concavity and nonnegativity at zero imply that $f(z)/z$ is
decreasing for $z>0$. Under the size-biased law
$dQ=Z\,d\Pp/\E Z$, it has nonpositive covariance with every
increasing function $\psi$ for which the terms are integrable.
Equivalently,
\begin{equation}
 \E[f(Z)\psi(Z)]\le\frac{\E f(Z)}{\E Z}\E[Z\psi(Z)].
 \label{eq:size-biased-comparison}
\end{equation}
Apply \eqref{eq:size-biased-comparison} successively to
$f_1,\ldots,f_k$. At the $i$th step,
$\psi$ is a product of the remaining increasing nonnegative factors
and $Z^{i-1}$, so it is increasing. Gamma moments ensure
integrability. This yields \eqref{eq:concavity}.
\end{proof}

\begin{proposition}
\label{prop:quasi}
Let $0<\alpha<1$ and write $p(x)=p_\alpha(o,x)$ for $x\in V$.
For $x,y\in V$ and $a=x\wedge y$,
\begin{equation}
 \Pp_\alpha^T(o\leftrightarrow x,\ o\leftrightarrow y)
 \le K_\alpha\frac{p(x)p(y)}{p(a)},\qquad
 K_\alpha=\frac{(\alpha+1)(\alpha+2)}{\alpha^2}.
 \label{eq:quasi}
\end{equation}
\end{proposition}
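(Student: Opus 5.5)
The plan is to condition on the normalized occupation $X(a)$ at the branching vertex $a=x\wedge y$ and use the conditional independence from Lemma~\ref{lem:branching}. By \eqref{eq:positive-path}, the event $\{o\leftrightarrow x,\ o\leftrightarrow y\}$ is the event that $\ell_\alpha$ is positive on $[o,a]\cup[a,x]\cup[a,y]$. If $a=x$ (or $a=y$), the claim is the trivial bound $\Pp(o\leftrightarrow x,o\leftrightarrow y)\le p(y)=p(x)p(y)/p(a)$, since $K_\alpha\ge1$. So assume $a\ne x,y$. Then $[o,a]$, $[a,x]\setminus\{a\}$ and $[a,y]\setminus\{a\}$ lie in distinct components of $\wt T\setminus\{a\}$, or $[o,a]$ reduces to the point $a$ when $a=o$. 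Conditionally on $X(a)=z$, the three positivity events are therefore independent. By \eqref{eq:conditional-connection} and time reversal of the stationary diffusion \eqref{eq:CIR}, their conditional probabilities are $f_0(z)=F_{\alpha,q_0}(z)$, $f_1(z)=F_{\alpha,q_1}(z)$ and $f_2(z)=F_{\alpha,q_2}(z)$, with $q_0=\rho(o,a)^2$, $q_1=\rho(a,x)^2$ and $q_2=\rho(a,y)^2$. When $a=o$ we take $f_0\equiv1$.

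This gives
\[
\Pp_\alpha^T(o\leftrightarrow x,\ o\leftrightarrow y)=\E\bigl[f_0(Z)f_1(Z)f_2(Z)\bigr],\qquad Z\sim\Ga(\alpha,1).
\]
Each $f_i$ is bounded, nonnegative, increasing and concave by Proposition~\ref{prop:two-point}. Lemma~\ref{lem:concavity} with $k=3$ then bounds this by
\[
\frac{\E Z^3}{(\E Z)^3}\,\E f_0\,\E f_1\,\E f_2=\frac{(\alpha+1)(\alpha+2)}{\alpha^2}\,p(a)\,p_\alpha(a,x)\,p_\alpha(a,y).
\]
When $a=o$ we use $k=2$ instead, which gives the smaller constant $(\alpha+1)/\alpha\le K_\alpha$.

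It remains to convert $p(a)p_\alpha(a,x)p_\alpha(a,y)$ into $p(x)p(y)/p(a)$, that is, to show $p(a)p_\alpha(a,x)\le p(x)$ and the same for $y$. This is the delicate point, because $p(x)=\E[f_0(Z)f_1(Z)]$ and we need a lower bound, namely positive association of $f_0(Z)$ and $f_1(Z)$. Both are increasing functions of the single variable $Z$, so Chebyshev's (Harris') inequality gives $\E[f_0f_1]\ge\E f_0\,\E f_1$, which is $p(x)\ge p(a)p_\alpha(a,x)$. Substituting this for both $x$ and $y$ gives
\[
p(a)\,p_\alpha(a,x)\,p_\alpha(a,y)\le \frac{p(x)\,p(y)}{p(a)},
\]
which is \eqref{eq:quasi}.

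The main thing to check carefully is the conditional-independence step. We must confirm that the conditional law of $X$ on $[o,a]$ given $X(a)$ is the reversed diffusion, so that the conditional probability of positivity on $[o,a]$ is indeed $F_{\alpha,q_0}(X(a))$. This holds because \eqref{eq:CIR} is stationary and reversible with respect to $\Ga(\alpha,1)$. Alternatively, Lemma~\ref{lem:branching} can be applied directly with the geodesic directed from $a$ toward $o$.
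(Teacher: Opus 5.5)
Your proposal is correct and follows the paper's proof essentially step for step. The shared steps are conditioning on $X(a)$ and using conditional independence across the components of $\wt T\setminus\{a\}$, then applying Lemma~\ref{lem:concavity} with $k=3$ (or $k=2$ when $a=o$). You then use the nonnegative covariance of two increasing functions of $Z$ to obtain $p(x)\ge p(a)p_\alpha(a,x)$, and treat the nested cases $a=x$ or $a=y$ trivially.
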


\begin{proof}
Suppose first that $o,x,y$ lie in three different components of
$\wt T\setminus\{a\}$. For $v\in\{o,x,y\}$, set
$f_v(z)=F_{\alpha,\rho(a,v)^2}(z)$. Conditional on $Z=X(a)$,
the three events $\{a\leftrightarrow v\}$ are independent with
increasing concave probabilities $f_v(Z)$ by
\eqref{eq:conditional-connection}. Their simultaneous
occurrence is precisely the joint event in \eqref{eq:quasi}.
Equation~\eqref{eq:concavity}, with
$\E Z^3/(\E Z)^3=K_\alpha$, gives
\begin{equation}
 \Pp_\alpha^T(o\leftrightarrow x,\ o\leftrightarrow y)
 \le K_\alpha p_\alpha(o,a)p_\alpha(a,x)p_\alpha(a,y).
 \label{eq:three-connections}
\end{equation}
Two increasing functions of the same scalar variable have
nonnegative covariance. Therefore
\begin{equation}
 p(x)=\E[f_o(Z)f_x(Z)]
 \ge p_\alpha(o,a)p_\alpha(a,x),
 \label{eq:pair-positive-correlation}
\end{equation}
and the same inequality holds with $y$ in place of $x$. Substitution
in \eqref{eq:three-connections} proves \eqref{eq:quasi}. If $a=o$,
apply \eqref{eq:concavity} to $f_x$ and $f_y$; the moment constant
$(\alpha+1)/\alpha$ is at most $K_\alpha$.
If $a=x$ or $a=y$, the connection events are nested and the bound
holds with constant one.
\end{proof}

Thus cable connectivity is quasi-independent in the sense
of~\cite[Section~5.4]{LP}. A related three-point estimate for
discrete loop clusters on regular trees is proved
in~\cite[Lemma~5.1]{MS}. The constant in \eqref{eq:quasi} depends
only on the soup intensity.

\subsection{Boundary dimension and finite-energy flows}
\label{sec:cutsets}

For a unit flow $\mu$ and $\beta>0$, define its boundary energy by
\begin{equation}
 \mathcal E_\beta(\mu)=
 \iint_{\partial T\times\partial T}
      \delta_G(\xi,\eta)^{-\beta}\,d\mu(\xi)d\mu(\eta),
 \label{eq:energy}
\end{equation}
with integrand $+\infty$ when $\xi=\eta$. By
\eqref{eq:green-boundary-metric}, the kernel is
$\rho_o(\xi\wedge\eta)^{-\beta}$ off the diagonal, or equivalently
$g(o,o)^{\beta/2}\Theta_o(\xi,\eta)^{\beta/2}$ by
\eqref{eq:green-naim}. Thus this is the usual inverse-distance
energy for the Green boundary. By
\eqref{eq:ray-correlation-decay}, telescoping along the common
ancestors and then applying monotone convergence gives
\begin{equation}
 \mathcal E_\beta(\mu)
 =1+\sum_{x\ne o}
    [\rho_o(x)^{-\beta}-\rho_o(x^-)^{-\beta}]\mu(x)^2.
 \label{eq:energy-series}
\end{equation}

\begin{lemma}
\label{lem:capacity-criterion}
Let $0<\alpha<1$ and $\beta=2(1-\alpha)$. If
\begin{equation}
 \inf_\Pi\sum_{x\in\Pi}\rho_o(x)^\beta=0,
 \label{eq:zero-cutset-sum}
\end{equation}
there is almost surely no unbounded cluster.
If $\mathcal E_\beta(\mu)<\infty$ for some unit
flow, then
\begin{equation}
 \Pp_\alpha^T(o\leftrightarrow\infty)
 \ge \frac{c_\alpha}{K_\alpha\mathcal E_\beta(\mu)}>0.
 \label{eq:survival}
\end{equation}
\end{lemma}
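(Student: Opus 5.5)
The argument is the classical first/second moment method on trees (as in \cite[Chapter~5]{LP}), using \eqref{eq:power} for the one-point bounds and \eqref{eq:quasi} for the correlations.

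\emph{First part: no unbounded cluster.} Fix a vertex $u$ and suppose first $u=o$. If the cluster of $o$ is unbounded, it contains infinitely many vertices. It is connected and the tree has unique geodesics, so by \eqref{eq:positive-path} it contains an infinite ray from $o$. That ray meets every cutset $\Pi$, so $o\leftrightarrow x$ for some $x\in\Pi$. A union bound and \eqref{eq:power} give
\[
 \Pp_\alpha^T(o\leftrightarrow\infty)\le\sum_{x\in\Pi}p_\alpha(o,x)\le C_\alpha\sum_{x\in\Pi}\rho_o(x)^\beta .
\]
Taking the infimum over $\Pi$ and using \eqref{eq:zero-cutset-sum} gives probability zero.

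For a general vertex $u$ I would avoid re-rooting, which would need root independence not yet proved. Instead, an unbounded cluster containing any point contains an infinite ray. Hence it contains all but finitely many vertices of some ray from $o$, and so meets each cutset $\Pi$ lying beyond a fixed finite ball. I would therefore work in the subtrees $T_y$: cutsets of $T$ whose vertices are all descendants of a given $y$ (together with other branches) still have vanishing sum along a subsequence. I would bound the event that some $y$ at depth $n$ is joined to infinity within $T_y$ by $\sum_{x\in\Pi\cap T_y}p_\alpha(y,x)$. Multiplicativity \eqref{eq:multiplicative} gives $p_\alpha(y,x)\le C_\alpha\rho_o(y)^{-\beta}\rho_o(x)^\beta$, and the finitely many factors $\rho_o(y)^{-\beta}$ at depth $\le n$ are harmless. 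Letting $\Pi$ range over cutsets beyond depth $n$ (the infimum over such cutsets is still zero, by refining) and taking a countable union over $n$ and $y$ finishes this part.

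\emph{Second part: survival bound.} Fix a unit flow $\mu$ with $\mathcal E_\beta(\mu)<\infty$ and truncate at depth $n$. Consider the level cutset $\Pi_n=\{|x|=n\}$, or more carefully the set of vertices at depth $n$ together with the finite-tree leaves, which I handle by noting that $\mu$ vanishes on finite subtrees. Define
\[
 S_n=\sum_{|x|=n}\mu(x)\,\frac{\ind\{o\leftrightarrow x\}}{p(x)} .
\]
Then $\E S_n=1$. By \eqref{eq:quasi},
\[
 \E S_n^2\le K_\alpha\sum_{|x|=|y|=n}\frac{\mu(x)\mu(y)}{p(x\wedge y)} ,
\]
and \eqref{eq:power} bounds $1/p(a)\le c_\alpha^{-1}\rho_o(a)^{-\beta}$. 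Regrouping by $a=x\wedge y$ (with $x=y$ counted via $a=x$), the double sum becomes the truncated energy: it is at most
\[
 \iint\rho_o\bigl((\xi\wedge\eta)_{\le n}\bigr)^{-\beta}\,d\mu\,d\mu\le\mathcal E_\beta(\mu),
\]
where $(\xi\wedge\eta)_{\le n}$ is the confluence capped at depth $n$, using the monotonicity of $\rho_o$. Cauchy--Schwarz then gives
\[
 \Pp_\alpha^T(S_n>0)\ge\frac{(\E S_n)^2}{\E S_n^2}\ge\frac{c_\alpha}{K_\alpha\mathcal E_\beta(\mu)} .
\]
The events $\{o\leftrightarrow\text{depth }n\}$ decrease in $n$. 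Their intersection is the event that the cluster of $o$ contains infinitely many vertices, which by properness means it is unbounded. This gives \eqref{eq:survival}.

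\emph{Main obstacle.} The main step is the energy comparison: checking that the second-moment sum is dominated by \eqref{eq:energy} with the correct constant and without degree bounds. This works because \eqref{eq:quasi} has a constant depending only on $\alpha$, and the diagonal terms $x=y$ are bounded by $\mu(x)^2/p(x)\le\mu(x)^2\rho_o(x)^{-\beta}/c_\alpha$, which is part of the capped kernel. Minor care is needed for rays through vertices with $\mu(x)=0$, and for the first part's reduction from a general starting vertex to cutsets from $o$.
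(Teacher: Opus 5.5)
Your proof is correct. The second part is the paper's argument: the same weighted count $\sum_{|x|=n}\mu(x)\ind_{\{o\leftrightarrow x\}}/p_\alpha(o,x)$, the bound via \eqref{eq:quasi} and \eqref{eq:power}, the domination by $\mathcal E_\beta(\mu)$, Cauchy--Schwarz, and the decreasing events "the root cluster meets depth $n$". Your capped confluence $\xi_n\wedge\eta_n$ is the paper's observation $\delta_G(\xi,\eta)\le\rho_o(x\wedge y)$, with the diagonal included.

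The first part takes a genuinely different route for unbounded clusters that do not contain $o$. The paper fixes a vertex $u$ that lies in an unbounded cluster with positive probability. It then applies the Harris inequality for the Poisson soup to get $\Pp_\alpha^T(o\leftrightarrow\infty)\ge p_\alpha(o,u)\Pp_\alpha^T(u\leftrightarrow\infty)$, which reduces everything to the root case. You instead run the first-moment bound from every vertex $y$ into its own subtree $T_y$. You use $p_\alpha(y,x)\le C_\alpha\rho_o(y)^{-\beta}\rho_o(x)^\beta$, which follows from \eqref{eq:multiplicative} and \eqref{eq:power}, and then take a countable union over $y$.

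This works and needs no positive association. Your claim that the infimum over cutsets beyond depth $n$ is still zero needs no refinement. Any cutset with sum below $\min_{|x|\le n}\rho_o(x)^\beta$, which is positive by local finiteness, cannot contain a vertex of depth at most $n$.

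What the paper's route gives in addition is the inequality \eqref{eq:root-transfer} itself. It is reused in the proof of Theorem~\ref{thm:classification} to show that positive percolation probability does not depend on the chosen vertex. Your argument proves the lemma but does not supply that tool.
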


\begin{proof}
An infinite root cluster contains a ray and so meets every cutset.
The union bound and \eqref{eq:power} imply
\begin{equation}
 \Pp_\alpha^T(o\leftrightarrow\infty)
 \le C_\alpha\sum_{x\in\Pi}\rho_o(x)^\beta.
 \label{eq:cutset-first-moment}
\end{equation}
Under \eqref{eq:zero-cutset-sum}, taking the infimum in
\eqref{eq:cutset-first-moment} proves that the root cluster is bounded
almost surely.
Every unbounded cluster contains a vertex. If one occurred with
positive probability, countability would give a fixed vertex $u$
with positive probability of belonging to one. The events
$\{o\leftrightarrow u\}$ and $\{u\leftrightarrow\infty\}$ are
increasing in the Poisson soup, and are positively associated.
The Poisson Harris inequality gives
\begin{equation}
 \Pp_\alpha^T(o\leftrightarrow\infty)
 \ge p_\alpha(o,u)\Pp_\alpha^T(u\leftrightarrow\infty).
 \label{eq:root-transfer}
\end{equation}
Since $p_\alpha(o,u)>0$, \eqref{eq:root-transfer} rules out an
unbounded cluster at any fixed vertex, and hence anywhere in $\wt T$.

To prove \eqref{eq:survival}, we give the weighted second-moment
argument directly; see~\cite[Section~5.3 and Theorem~5.19]{LP}.
Fix a unit flow $\mu$ with $\mathcal E_\beta(\mu)<\infty$ and set
\begin{equation}
 Z_n=\sum_{|x|=n}\frac{\mu(x)}{p_\alpha(o,x)}
             \ind_{\{o\leftrightarrow x\}}.
 \label{eq:weighted-connection-count}
\end{equation}
Flow conservation gives $\E_\alpha^T Z_n=1$. By
\eqref{eq:quasi} and \eqref{eq:power},
\begin{equation}
 \begin{aligned}
 \E_\alpha^T[Z_n^2]
 &\le K_\alpha\sum_{|x|=|y|=n}
       \frac{\mu(x)\mu(y)}{p_\alpha(o,x\wedge y)}\\
 &\le\frac{K_\alpha}{c_\alpha}
       \sum_{|x|=|y|=n}\mu(x)\mu(y)\rho_o(x\wedge y)^{-\beta}
 \le\frac{K_\alpha}{c_\alpha}\mathcal E_\beta(\mu).
 \end{aligned}
 \label{eq:connection-second-moment}
\end{equation}
For the last inequality, if $\xi\in\partial T_x$ and
$\eta\in\partial T_y$, then
$\delta_G(\xi,\eta)\le\rho_o(x\wedge y)$. This also holds when
$x=y$, so the diagonal terms are included in the energy bound.
Let $A_n$ be the event that the root cluster meets generation $n$.
Since $\{Z_n>0\}\subseteq A_n$, Cauchy--Schwarz gives
\begin{equation}
 \Pp_\alpha^T(A_n)\ge\Pp_\alpha^T(Z_n>0)
 \ge\frac{(\E_\alpha^T Z_n)^2}{\E_\alpha^T[Z_n^2]}
 \ge\frac{c_\alpha}{K_\alpha\mathcal E_\beta(\mu)}.
 \label{eq:generation-second-moment}
\end{equation}
By connectedness, local finiteness and properness, the events $A_n$
decrease to $\{o\leftrightarrow\infty\}$. Letting $n\to\infty$
proves \eqref{eq:survival}.
\end{proof}

\begin{lemma}
\label{lem:frostman}
Under \eqref{eq:weighted-setting}, $\delta_G$ is an ultrametric
inducing the compact cylinder topology on $\partial T$, and
\eqref{eq:green-boundary-dimension} holds. If $0<\beta<s<D_G$,
there is a unit flow $\mu$ such that
\begin{equation}
 \mathcal E_\beta(\mu)\le1+\delta^{-1}\frac{\beta}{s-\beta},
 \qquad \delta=\inf_\Pi\sum_{x\in\Pi}\rho_o(x)^s>0.
 \label{eq:frostman}
\end{equation}
\end{lemma}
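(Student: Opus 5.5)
The argument has three parts: the topology, the dimension identity, and the flow with the energy bound \eqref{eq:frostman}.

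\emph{Topology.} The ultrametric inequality is immediate. For three rays, the two smallest of the three confluence depths coincide, and $\rho_o$ decreases strictly along rays by \eqref{eq:multiplicative}, so two of the three distances are equal and the third is no larger. To see that $\delta_G$ induces the cylinder topology, observe that the $\delta_G$-ball of radius $\rho_o(x)$ around a ray through $x$ contains $\partial T_x$. Conversely, by \eqref{eq:ray-correlation-decay} and local finiteness, every $\delta_G$-ball around $\xi$ contains a cylinder $\partial T_{\xi_n}$. Indeed, $\rho_o(\xi_n)\downarrow 0$, and a ray $\eta$ with $\delta_G(\xi,\eta)<\rho_o(\xi_n)$ must branch from $\xi$ strictly beyond $\xi_n$. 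Compactness then follows from K\"onig's lemma, since the cylinder topology of a locally finite tree is compact.

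\emph{Dimension identity.} The diameter of $\partial T_x$ is at most $\rho_o(x)$. A cutset therefore gives a cover by sets of diameter at most $\max_\Pi\rho_o$, and cutsets with arbitrarily small maximum exist: by compactness and the decay along rays, we can stop each ray at the first vertex with $\rho_o<\varepsilon$. Hence $\dim_{\mathrm H}\le D_G$. Conversely, any set $A$ of positive diameter can be replaced by the cylinder $\partial T_{v}$, where $v$ is the last common vertex of the rays in $A$. Then $\operatorname{diam}A=\rho_o(v)$, by ultrametricity and because the diameter is attained at branching at $v$ (or approached there). Any cover of the compact boundary by such cylinders reduces to a finite subcover forming a cutset after discarding nested members. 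Thus small covers give cutset sums that are bounded below when $s<D_G$, which yields $\dim_{\mathrm H}\ge D_G$. One also has to handle sets of diameter zero, which are single rays; these are covered by small cylinders at negligible cost.

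\emph{Flow.} Fix $s<D_G$, so that $\delta>0$. Put capacity $\rho_o(x)^s$ on the edge into $x$, and $\rho_o(o)^s=1$ at the root. By max-flow/min-cut on the truncated trees up to depth $n$, and then compactness (a diagonal limit of flows, with conservation preserved by local finiteness), there is a flow of strength $\delta$ with $\mu'(x)\le\rho_o(x)^s$ for all $x$. Set $\mu=\mu'/\delta$, so that $\mu(x)\le\delta^{-1}\rho_o(x)^s$. In \eqref{eq:energy-series}, bound one factor $\mu(x)$ by this estimate:
\[
\mathcal E_\beta(\mu)\le 1+\delta^{-1}\sum_{x\ne o}\bigl[\rho_o(x)^{-\beta}-\rho_o(x^-)^{-\beta}\bigr]\rho_o(x)^s\mu(x).
\]
Write $\mu(x)=\int\ind\{\xi\in\partial T_x\}\,d\mu(\xi)$ and exchange sum and integral. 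For each ray it then suffices to show that
\[
\sum_{n\ge1}\bigl[\rho_n^{-\beta}-\rho_{n-1}^{-\beta}\bigr]\rho_n^s\le \frac{\beta}{s-\beta},
\qquad \rho_n=\rho_o(\xi_n).
\]
This follows from a Riemann--Stieltjes comparison. With $t=\rho^{-1}$ increasing from $1$ to $\infty$, each term equals $(t_n^\beta-t_{n-1}^\beta)t_n^{-s}\le\int_{t_{n-1}}^{t_n}\beta t^{\beta-1}t^{-s}\,dt$, because $t^{-s}$ is decreasing. Summing gives at most $\int_1^\infty\beta t^{\beta-s-1}\,dt=\beta/(s-\beta)$. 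Integrating against the probability measure $\mu$ gives \eqref{eq:frostman}.

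The main obstacle is the max-flow/min-cut step on an infinite tree with possibly unbounded degrees. One has to check that the minimum cut over finite truncations is at least $\delta$; a truncated cut that uses the depth-$n$ leaves is itself a cutset. One also has to check that a subsequential limit of the finite-depth flows is a genuine unit flow of strength $\delta$. This uses local finiteness for the coordinatewise limit and preservation of the root value.
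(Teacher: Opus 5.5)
Your proposal is correct. The flow construction and the energy estimate agree with the paper's. The paper simply cites max-flow/min-cut for vertex capacities $\rho_o(x)^s$, and your truncation-and-compactness argument is what lies behind that citation. Its Stieltjes comparison uses $t_n=\rho_o(\xi_n)^{-\beta}$ and $\int_1^\infty t^{-s/\beta}\,dt$ rather than your variable $t=\rho^{-1}$, but it is the same computation.

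The genuine difference is the lower bound $\dim_{\mathrm H}(\partial T,\delta_G)\ge D_G$.
\begin{itemize}
\item You reduce an arbitrary cover directly to a cutset. A set containing at least two rays lies in the cylinder at its last common vertex $v$ and has diameter exactly $\rho_o(v)$. Singletons are absorbed by small cylinders. Compactness, together with the nested-or-disjoint structure of cylinders, then yields a finite antichain, i.e.\ a cutset.
\item The paper instead builds the flow first and uses it as a Frostman measure. The closed ball $B_{\delta_G}(\xi,t)$ is the cylinder $\partial T_{\xi_n}$ for the first $n$ with $\rho_o(\xi_n)\le t$, so $\mu(B_{\delta_G}(\xi,t))\le\delta^{-1}t^s$. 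Every set of positive diameter lies in a closed ball of that diameter. By the mass distribution principle, every cover by sets of diameter less than one has $s$-sum at least $\delta$.
\end{itemize}
The paper's route is more economical, because one flow serves both the dimension identity and the energy bound \eqref{eq:frostman}. Yours proves the dimension identity using only compactness of the cylinder topology, with no appeal to max-flow/min-cut.

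Two small repairs are needed.
\begin{itemize}
\item In the upper bound, the fineness of the covers must come from the small-sum cutsets themselves, via $\max_{x\in\Pi}\rho_o(x)^s\le\sum_{x\in\Pi}\rho_o(x)^s$. Exhibiting, separately, cutsets with small maximum does not combine with cutsets of small sum.
\item In the topology paragraph, the claim that every ball about $\xi$ contains some $\partial T_{\xi_n}$ follows from $\operatorname{diam}\partial T_{\xi_n}\le\rho_o(\xi_n)\to0$. Your observation that $\eta$ must ``branch strictly beyond $\xi_n$'' proves the other inclusion: the open ball of radius $\rho_o(\xi_n)$ about $\xi$ lies inside $\partial T_{\xi_n}$, so cylinders are open. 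Both inclusions are needed and both are present in your write-up, but each justification is attached to the wrong claim.
\end{itemize}
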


\begin{proof}
The strict decrease of $\rho_o$ along paths and
\eqref{eq:ray-correlation-decay} show that $\delta_G$ is an
ultrametric. For a ray $\xi$ and $0<t<1$, the closed ball
$B_{\delta_G}(\xi,t)$ is $\partial T_{\xi_n}$, where $n$ is the first index
with $\rho_o(\xi_n)\le t$. Hence its topology is the cylinder
topology, which is compact by local finiteness.
If $s>D_G$, choose cutsets whose sums of $\rho_o(x)^s$ tend to
zero. Their cylinders cover $\partial T$ and have diameter at
most $\rho_o(x)$. The largest diameter also tends to zero, so the
$s$-dimensional Hausdorff measure is zero. If $D_G=0$, this already
proves \eqref{eq:green-boundary-dimension}.

Now fix $0<\beta<s<D_G$ and let $\delta$ be as in
\eqref{eq:frostman}.
Since $0<\rho_o(x)\le1$, the set of exponents with positive cutset
infimum is downward closed, so $\delta>0$. The max-flow/min-cut
theorem~\cite[Theorem~3.1]{LP}, with vertex capacities
$\rho_o(x)^s$, gives a unit flow satisfying
\begin{equation}
 \mu(x)\le\delta^{-1}\rho_o(x)^s\qquad(x\in V).
 \label{eq:flow-correlation-bound}
\end{equation}
The description of balls above gives
$\mu(B_{\delta_G}(\xi,t))\le\delta^{-1}t^s$ for $0<t<1$.
Every set of positive diameter is contained in a closed ball of
that diameter, and singletons have zero mass by
\eqref{eq:ray-correlation-decay}. Thus every countable cover of
the boundary by sets of diameter less than one has sum of
$s$th powers of diameters at least $\delta$. This proves
$\dim_{\mathrm H}(\partial T,\delta_G)\ge s$, and establishes
\eqref{eq:green-boundary-dimension} by letting $s\uparrow D_G$.

For $\xi\in\partial T$, put $t_n=\rho_o(\xi_n)^{-\beta}$, so
$t_0=1$ and $t_n\uparrow\infty$ by \eqref{eq:ray-correlation-decay}.
Using \eqref{eq:flow-correlation-bound} in \eqref{eq:energy-series}
and applying Tonelli's theorem gives
\begin{equation}
\begin{aligned}
 \mathcal E_\beta(\mu)-1
 &\le\delta^{-1}\sum_{x\ne o}\mu(x)\rho_o(x)^s
        [\rho_o(x)^{-\beta}-\rho_o(x^-)^{-\beta}]\\
 &=\delta^{-1}\int_{\partial T}
      \sum_{n\ge1}t_n^{-s/\beta}(t_n-t_{n-1})\,d\mu(\xi),
\end{aligned}
 \label{eq:energy-along-rays}
\end{equation}
where the sequence $(t_n)$ in the integral depends on $\xi$.
Since $t\mapsto t^{-s/\beta}$
is decreasing,
\begin{equation}
 \sum_{n\ge1}t_n^{-s/\beta}(t_n-t_{n-1})
 \le\int_1^\infty t^{-s/\beta}\,dt=\frac{\beta}{s-\beta}.
 \label{eq:energy-integral-bound}
\end{equation}
Substituting \eqref{eq:energy-integral-bound} into
\eqref{eq:energy-along-rays} proves \eqref{eq:frostman}.
\end{proof}

\begin{proof}[Proof of Theorem~\ref{thm:classification}]
Definition~\eqref{eq:DG} gives $D_G\ge0$. To prove the upper bound, combine
\eqref{eq:green-resistance} and \eqref{eq:energy-series}:
\begin{equation}
 \mathcal E_2(\mu)
 =1+\frac1{g_o}\sum_{x\ne o}
              \frac{\mu(x)^2}{\widehat c_{x^-x}}.
 \label{eq:energy-two}
\end{equation}
If $D_G>2$, \eqref{eq:frostman} with $\beta=2$ would give
a finite-energy unit flow in the recurrent network
$(T,\widehat c)$, a contradiction. Thus $D_G\le2$.

For $0<\alpha<1$, put $\beta=2(1-\alpha)$.
If $\beta<D_G$, choose $\beta<s<D_G$ and apply
\eqref{eq:frostman} and \eqref{eq:survival} to get
positive probability of root percolation. If $\beta>D_G$, the
cutset infimum at exponent $\beta$ is zero, so
\eqref{eq:cutset-first-moment} and \eqref{eq:root-transfer} give
non-percolation.
Equation~\eqref{eq:full-connection} handles $D_G=0$ and all
intensities at least one. If $D_G=2$, the implication from
$\beta<D_G$ applies at every
$0<\alpha<1$. These statements prove \eqref{eq:classification}
throughout $0\le D_G\le2$.

The proof applies with any chosen root. Since the critical intensity
in \eqref{eq:critical} is defined without a root,
\eqref{eq:classification} proves that $D_G(T,o)$ is independent of $o$.
Equation~\eqref{eq:root-transfer}, applied in both directions, also
makes positive percolation probability equivalent at all fixed
vertices. Independence of the skeleton follows from
\eqref{eq:point-cutsets}.
\end{proof}

\Needspace{12\baselineskip}
\subsection{Metric formulation and weighted examples}
\label{sec:metric}

\paragraph*{Point cutsets and the real-tree formulation.}

\begin{lemma}
\label{lem:point-cutsets}
On a cable tree satisfying \eqref{eq:weighted-setting}, let
$\mathcal C_o^{\mathrm{pt}}$ be the finite point cutsets of its
infinite rays. For every $s\ge0$,
\begin{equation}
 \inf_{\Pi\in\mathcal C_o^{\mathrm{pt}}}\sum_{x\in\Pi}\rho_o(x)^s
 =\inf_{\Pi\in\mathcal C_o}\sum_{x\in\Pi}\rho_o(x)^s.
 \label{eq:point-cutsets}
\end{equation}
Consequently $D_G$ is independent of the weighted skeleton.
\end{lemma}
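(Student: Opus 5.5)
Since every vertex cutset is in particular a point cutset (vertices are cable points, and a finite vertex set meeting every vertex ray meets every infinite isometric ray from $o$ in the cable tree), the infimum over $\mathcal C_o^{\mathrm{pt}}$ is at most the infimum over $\mathcal C_o$. For the reverse inequality I will map an arbitrary point cutset $\Pi$ to a vertex cutset with no larger sum. For each $z\in\Pi$, either $z$ is a vertex, or $z$ lies in the interior of a cable $[x^-,x]$ with $x$ the endpoint farther from $o$. In the latter case I replace $z$ by $x$. By \eqref{eq:multiplicative} for cable points and the strict decrease of $\rho_o$ along geodesics away from $o$, we get $\rho_o(x)\le\rho_o(z)$. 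Hence $\rho_o(x)^s\le\rho_o(z)^s$ for all $s\ge0$.

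Two checks remain for the resulting set $\Pi'$. First, $\Pi'$ meets every vertex ray. An infinite isometric ray from $o$ in the cable tree passes through exactly the vertices of some $\xi\in\partial T$. Properness and the fact that vertices are closed and discrete ensure that every infinite ray from $o$ is the concatenation of the cables along some $\xi\in\partial T$: a ray visits infinitely many vertices and cannot terminate inside a cable. If the ray meets $\Pi$ at a point $z$ inside $[\xi_{n-1},\xi_n]$, then $\xi_n\in\Pi'$. Second, $\Pi'$ is finite, but it may contain comparable pairs or repeated vertices. Repetitions only lower the sum when merged. For comparable pairs, I prune $\Pi'$ to its set of minimal elements, those with no proper ancestor in $\Pi'$. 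This set still meets every ray, since whichever element a ray hits, the ray also passes through that element's minimal ancestor in $\Pi'$. It is an antichain, and since all terms are nonnegative, dropping elements lowers the sum. We obtain a cutset in $\mathcal C_o$ with sum at most $\sum_{z\in\Pi}\rho_o(z)^s$, which gives equality of the infima. The edge cases are $\Pi=\{o\}$, which is already a vertex cutset, and points equal to vertices, which are kept unchanged.

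For skeleton independence, note that $\rho_o$ on cable points depends only on the Green function of the Brownian motion on the metric tree. That Green function is determined by the metric space and length measure through the energy \eqref{eq:brownian-energy}, and not by the choice of skeleton. The point-cutset family is likewise defined intrinsically. The left side of \eqref{eq:point-cutsets} is therefore skeleton-free, so $D_G(T,o)$ is the same for any two weighted skeletons that share the root $o$ as a vertex. When the roots differ, we use the root-independence already proved in Theorem~\ref{thm:classification}. Alternatively, a common vertex root is available because any cable point can be made a vertex by subdivision, and subdividing an edge in series preserves the cable system. The main obstacle I expect is justifying that infinite rays in the cable metric correspond exactly to elements of $\partial T$. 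I would handle this with properness: a ray has unbounded length, bounded balls contain finitely many vertices, so the ray crosses infinitely many cables, and as an injective geodesic it must follow a single vertex ray.
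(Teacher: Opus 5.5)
Your argument is correct and matches the paper's proof: you push each interior point of a point cutset to the far endpoint of its cable, use \eqref{eq:multiplicative} to see that the weights can only decrease, prune to an antichain, and observe that the point-cutset infimum is intrinsic to the metric tree. One correction of emphasis: the step ``$\Pi'$ meets every $\xi\in\partial T$'' needs the cable path along each $\xi$ to be unbounded, and this is where properness enters, since the infinitely many vertices $\xi_n$ cannot lie in one bounded ball; the direction you actually justify, from isometric rays to elements of $\partial T$, already follows from the finiteness of cable lengths.
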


\begin{proof}
Every vertex cutset is a point cutset, so the point infimum is no
larger. Conversely, start with a finite point cutset and discard
points belonging to no infinite ray. Move each point in the interior
of an edge to its endpoint farther from $o$. All rays previously
met are still met. By \eqref{eq:multiplicative}, each weight
$\rho_o(x)^s$ can only decrease. If necessary, discard duplicates
and descendants of other selected vertices so that no two remaining
vertices are ancestors of one another;
the remaining set still meets every ray. Thus the vertex infimum is
no larger than the point infimum. This proves
\eqref{eq:point-cutsets}. The left-hand side of
\eqref{eq:point-cutsets} uses only the metric tree and its Brownian
Green function. Both the length measure and the energy
\eqref{eq:brownian-energy} are unchanged when degree-two vertices
are inserted or removed. The exponent is therefore the same for
every weighted skeleton of this metric tree.
\end{proof}

\begin{proof}[Proof of Corollary~\ref{cor:real-tree}]
By the skeleton theorem of
Athreya--Eckhoff--Winter~\cite[Lemma~7.1 and Remark~7.2]{AEW},
every point of $\mathcal T$ has finite degree, meaning that its removal
leaves finitely many connected components. Each bounded ball contains
only finitely many branch points, that is, points of degree at least
three. Include $o$ among these vertices
and insert degree-two vertices along any unbounded arcs. We obtain
a locally finite skeleton with finite positive edge lengths.

Assign conductance $c_e=(2\ell_e)^{-1}$ to an edge of length
$\ell_e$. The cable energy $\frac12\int|f'|^2$ is exactly the
Brownian energy on $\mathcal T$. For fixed values at the endpoints
$x,y$ of $e$, the least energy on that edge is
$c_e(f(x)-f(y))^2$, attained by linear interpolation.
Thus the cable diffusion, its Green
function and its loop soup agree with those on $\mathcal T$.
Equations~\eqref{eq:classification}, \eqref{eq:full-connection} and
\eqref{eq:point-cutsets} give \eqref{eq:real-tree-classification}
and full connection at intensity one.
\end{proof}

The assumption that every point lies on an infinite ray is used
only in this skeleton reduction.

\paragraph*{Spherical cutsets and thresholds below one-half.}

For a spherically symmetric tree with no leaves, let $N_n$ be the size of generation
$n$ and let $\rho_n$ be the common value of $\rho_o(x)$ for
$|x|=n$. The probability measure that chooses a child uniformly at
each step assigns mass $N_n^{-1}$ to each cylinder at depth $n$.
Thus for every $s>0$,
\begin{equation}
 \inf_\Pi\sum_{x\in\Pi}\rho_o(x)^s
 =\inf_{n\ge0}N_n\rho_n^s,\qquad
 D_G=\liminf_{n\to\infty}\frac{\log N_n}{-\log\rho_n}.
 \label{eq:spherical-cutsets}
\end{equation}
Indeed, write each cutset sum as
$\sum_{x\in\Pi}N_{|x|}^{-1}[N_{|x|}\rho_{|x|}^s]$ and apply
\eqref{eq:flow-conservation}. Taking a generation as cutset gives
the reverse inequality. By \eqref{eq:ray-correlation-decay},
$-\log\rho_n\to\infty$. Hence exponents smaller than the liminf in
\eqref{eq:spherical-cutsets} have positive cutset infimum, whereas
exponents larger than that liminf have infimum zero. This proves
the formula for $D_G$ in \eqref{eq:spherical-cutsets}.

\begin{proof}[Proof of Proposition~\ref{prop:weighted-example}]
Every cable has length at least $1/2$, so the metric is proper.
The electrical unit flow splits equally between descendants, and
its potential at generation $n$ is
\begin{equation}
 V_n=\frac1{b+1}\sum_{k=n}^\infty(ba)^{-k}
     =g_o(ba)^{-n},\qquad g_o=\frac{ba}{(b+1)(ba-1)}<\infty.
 \label{eq:geometric-potential}
\end{equation}
The convergent series of level resistances in \eqref{eq:geometric-potential}
also proves transience.
Consequently $h(x)=(ba)^{-n}$ for $|x|=n$. Equation
\eqref{eq:green-resistance} gives the exact identity
\begin{equation}
 \rho_n^{-2}
 =1+(b+1)(ba-1)\frac{(b^2a)^n-1}{b^2a-1}.
 \label{eq:geometric-correlation}
\end{equation}
Hence $\rho_n\asymp(b\sqrt a)^{-n}$. Since
$N_n=(b+1)b^{n-1}$ for $n\ge1$, equation
\eqref{eq:spherical-cutsets} proves the formula for $D_G$ in
\eqref{eq:weighted-example}; \eqref{eq:classification} gives the
critical intensity.
\end{proof}

\begin{proof}[Proof of Proposition~\ref{prop:zero-threshold}]
We work with the unsplit binary tree, giving conductance
$c_n=L_n^{-1}$ to each edge between generations $n$ and $n+1$.
Its cable tree is proper since every cable has length at least
$1/2$, and subdivision leaves the cable tree unchanged.
The equally splitting unit flow has energy $V_0<\infty$ and
potential at generation $n$ given by
\begin{equation}
 V_n=\sum_{k=n}^\infty\frac{L_k}{2^{k+1}}
     \asymp\frac1{n+1}.
 \label{eq:zero-threshold-potential}
\end{equation}
Indeed, the ceiling in \eqref{eq:zero-threshold-lengths} changes this
tail by at most $2^{-n}$. In particular, the network is transient.
Writing $h_n=V_n/V_0$, the transformed conductances satisfy
\begin{equation}
 \widehat c_n=L_n^{-1}h_nh_{n+1}\asymp2^{-n}.
 \label{eq:zero-threshold-transform}
\end{equation}
Equation~\eqref{eq:green-resistance} therefore gives
\begin{equation}
 \rho_n^{-2}=1+\frac1{V_0}\sum_{k=0}^{n-1}\widehat c_k^{-1}
       \asymp2^n.
 \label{eq:zero-threshold-correlation}
\end{equation}
Since $N_n=2^n$, the spherical formula \eqref{eq:spherical-cutsets}
yields $D_G=2$. The classification and skeleton invariance in
Theorem~\ref{thm:classification} prove \eqref{eq:zero-threshold}.
\end{proof}

\subsection{Capacity, harmonic measure and deterministic criteria}
\label{sec:unit}

Throughout this subsection we assume \eqref{eq:weighted-setting}.
Additional unit-conductance or degree assumptions are stated where
they are used. We first give the reductions and analytic criteria
needed for Theorem~\ref{thm:half-analytic}.

\paragraph*{Capacity along a ray.}

For adjacent vertices $u,v$, let $C_{u,v}$ be the conductance from
$u$ to infinity through the edge $uv$ and the component beyond $v$.
For a child $y$ of $x$, the series law gives
\begin{equation}
 C_{x,y}=\frac{c_{xy}C_y^+}{c_{xy}+C_y^+},\qquad
 C_x^+=\sum_{y:y^-=x}C_{x,y}.
 \label{eq:forward-conductances}
\end{equation}
For a ray $\xi$, write
\begin{equation}
 s_n(\xi)=\sum_{\substack{y:y^-=\xi_n\\y\ne\xi_{n+1}}}
                        C_{\xi_n,y}
 \label{eq:side-conductance}
\end{equation}
for the total conductance of its off-ray branches at $\xi_n$.

\begin{proposition}
\label{prop:ray-capacity}
For every $\xi\in\partial T$ and $n\ge1$,
\begin{equation}
 \capacity([o,\xi_n])=\sum_{j=0}^{n-1}s_j(\xi)+C_{\xi_n}^+.
 \label{eq:segment-capacity}
\end{equation}
Consequently \eqref{eq:capacity-condition} is equivalent to divergence
of the right-hand side of \eqref{eq:segment-capacity} along every
infinite ray. If all conductances are one, this is equivalent to
\begin{equation}
 \sum_{n\ge0}s_n(\xi)=\infty\qquad(\xi\in\partial T).
 \label{eq:ray-capacity-condition}
\end{equation}
\end{proposition}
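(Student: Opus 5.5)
The plan is to compute the capacity of the geodesic segment $[o,\xi_n]$ through its vertex description. By linear interpolation in \eqref{eq:capacity-definition}, this capacity equals the capacity of the vertex set $\{\xi_0,\ldots,\xi_n\}$. That is the effective conductance to infinity after wiring these vertices into one vertex $w$. An optimal $f$ equals one on the segment, so the edges inside the segment carry no energy.

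\textbf{Step 1: identify what hangs off $w$.} After wiring, $w$ is attached to the remaining tree only through edges leaving the path. These are the edges $\xi_jy$ with $y^-=\xi_j$ and $y\ne\xi_{j+1}$ for $j<n$, the edges from $\xi_n$ to its children, and the edges from $\xi_j$ to its parent. The last kind occur only on the path itself, since $\xi_j^-=\xi_{j-1}$ for $j\ge1$ and $\xi_0=o$ has no parent. The branches beyond these edges are vertex-disjoint subtrees $T_y$, and they meet only at $w$.

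\textbf{Step 2: apply the parallel law.} Effective conductances of disjoint branches meeting at $w$ add. I would justify this by a finite exhaustion: on each finite truncation with wired complement the parallel law is exact, and the limit passes through the finite sum. The branch through $\xi_jy$ contributes $C_{\xi_j,y}$, so the branches off $\xi_j$ together give $s_j(\xi)$ by \eqref{eq:side-conductance}. The children of $\xi_n$ together give $C_{\xi_n}^+$ by \eqref{eq:forward-conductances}. This proves \eqref{eq:segment-capacity}.

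\textbf{Step 3: pass from segments to all closed connected unbounded sets.} Capacity is monotone in the set and equals the supremum over compact subsets. Hence $\capacity([o,\xi))=\lim_n$ of the right-hand side of \eqref{eq:segment-capacity}; monotonicity in $n$ is automatic from inclusion of the segments. Now take any closed connected unbounded $A$. By properness, $A$ contains infinitely many vertices and therefore contains a set of the form $[v,\xi)$ for some vertex $v$ and ray $\xi$. I would obtain this ray by König's lemma on the infinite connected vertex set.

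\textbf{Step 4: handle rays not started at $o$ (main obstacle).} The capacity of $[v,\xi)$ should diverge if and only if the capacity of $[o,\xi)$ diverges, because adding the compact segment $[o,v]$ changes capacity by at most a finite amount. Subadditivity of capacity gives $\capacity([o,\xi))\le\capacity([o,v])+\capacity([v,\xi))$, and $\capacity([o,v])<\infty$ since the set is compact. Conversely $[o,\xi)\supseteq[v,\xi)$. Hence \eqref{eq:capacity-condition} is equivalent to divergence along every ray from $o$. I expect this reduction, together with checking subadditivity of the Dirichlet capacity (for example via $\max(f_1,f_2)$), to be the delicate bookkeeping.

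\textbf{Step 5: the unit-conductance case.} Here $C_{\xi_n}^+\le\de(\xi_n)-1$ is finite but need not be bounded along the ray, so I cannot simply drop it. Instead I use \eqref{eq:forward-conductances}:
\[
C_{\xi_n}^+=s_n+C_{\xi_n,\xi_{n+1}}
\quad\text{and}\quad
C_{\xi_n,\xi_{n+1}}=\frac{C_{\xi_{n+1}}^+}{1+C_{\xi_{n+1}}^+}<1.
\]
Therefore the right-hand side of \eqref{eq:segment-capacity} equals $\sum_{j=0}^{n}s_j(\xi)+C_{\xi_n,\xi_{n+1}}$. It differs from the partial sum $\sum_{j\le n}s_j(\xi)$ by less than one, which gives the equivalence with \eqref{eq:ray-capacity-condition}.
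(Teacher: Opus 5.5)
Your proposal is correct and follows the paper's proof essentially step for step. Like the paper, you wire the segment and apply the parallel law to the disjoint exterior branches, exhaust to the full ray, and reduce closed connected unbounded sets to ray tails using the finite capacity of a compact initial segment; in the unit case you likewise write $C_{\xi_n}^+=s_n(\xi)+C_{\xi_n,\xi_{n+1}}$ with $0\le C_{\xi_n,\xi_{n+1}}\le 1$. Your Step 4 only spells out, via subadditivity of capacity, the reduction the paper states in one line.
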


\begin{figure}[htbp]
 \centering
 \input{figures/cable_ray_capacity.tex}
 \caption{Wiring a ray segment. The off-ray branches at $\xi_j$, $j<n$,
 have total conductance $s_j(\xi)$. The shaded forward subtree at
 $\xi_n$ includes the continuation of the ray and contributes
 $C_{\xi_n}^+$. Wiring the bold segment places these conductances
 in parallel to infinity.}
 \label{fig:ray-capacity}
\end{figure}

\begin{proof}
Wire the finite path $[o,\xi_n]$ to one vertex, as in
Figure~\ref{fig:ray-capacity}. Its exterior
branches are disjoint and in parallel. The branches leaving before
$\xi_n$ contribute the sum in \eqref{eq:segment-capacity}, and
those leaving the endpoint contribute $C_{\xi_n}^+$. Requiring
potential one at the path vertices or on its whole cable gives the
same capacity, since the minimizing function is constant along
the intervening edges. This proves \eqref{eq:segment-capacity}.
Exhaustion gives the capacity of the full ray. Every unbounded
closed connected subset contains a ray tail, and a finite initial
segment has finite capacity, so this tests
\eqref{eq:capacity-condition}. In the unit case,
\begin{equation}
 C_{\xi_n}^+=s_n(\xi)+C_{\xi_n,\xi_{n+1}},\qquad
                         0\le C_{\xi_n,\xi_{n+1}}\le1.
 \label{eq:unit-ray-remainder}
\end{equation}
Combining \eqref{eq:segment-capacity} and
\eqref{eq:unit-ray-remainder} proves the equivalence with
\eqref{eq:ray-capacity-condition}.
\end{proof}

\paragraph*{Finite branches.}

Let
\begin{equation}
 T^\infty=\{x\in V:|T_x|=\infty\}
 \label{eq:infinite-descendants}
\end{equation}
be the subtree of vertices with infinite descent. Each component
removed from $T$ is finite and has one attachment to $T^\infty$.

\begin{lemma}
\label{lem:pruning}
The tree $T^\infty$, with its inherited conductances, is transient,
its cable metric is proper, and
\begin{equation}
 g_T(x,y)=g_{T^\infty}(x,y)\quad(x,y\in T^\infty),\qquad
 D_G(T)=D_G(T^\infty).
 \label{eq:pruning}
\end{equation}
Capacities of finite subsets of $T^\infty$ agree in both networks.
Condition~\eqref{eq:capacity-condition} holds on $T$ if and only if
it holds on $T^\infty$.
\end{lemma}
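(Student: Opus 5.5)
The approach is to show that removing finite branches does not change the electrical quantities seen from $T^\infty$. Each removed component $F$ is finite and attached to $T^\infty$ at a single vertex. Take the exhaustion definition of effective resistance and Green function at vertices. A unit current entering at $x\in T^\infty$ and exiting at infinity carries no current into a finite dead-end branch: the voltage in a finite subtree hanging off one vertex is harmonic with no outlet, so it is constant and equal to the attachment voltage. The same holds for every finite exhaustion that contains the branch. Hence the voltages on $T^\infty$ coincide in both networks, and $g_T(x,y)=g_{T^\infty}(x,y)$ for vertices $x,y\in T^\infty$. In particular $g_{T^\infty}(o,o)=g_T(o,o)<\infty$, so $T^\infty$ is transient; note $o\in T^\infty$, since $T$ is infinite and locally finite. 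We also need the Green function at cable points, but by \eqref{eq:multiplicative} and the hitting identities the vertex values determine $\rho_o$ on the vertices of $T^\infty$, and that is all \eqref{eq:DG} uses. Properness is immediate, because $\widetilde{T^\infty}$ is a closed subset of $\wt T$ with the same path metric; geodesics between its points stay inside it.

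For $D_G$, I would first note that $\partial T=\partial T^\infty$: every vertex of a ray has infinite descent. Cutsets of $T^\infty$ are cutsets of $T$, so the $T$-infimum is at most the $T^\infty$-infimum, with the same $\rho_o$ values by the Green identity. For the converse, take a cutset $\Pi$ of $T$. Discard its vertices outside $T^\infty$; these lie in finite branches and meet no ray. What remains still meets every ray and is a cutset of $T^\infty$. Removing points only lowers the sum, so the two infima agree for every $s$, and $D_G(T)=D_G(T^\infty)$.

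For capacities of a finite set $A\subseteq T^\infty$, I would wire $A$ and compute the effective conductance to infinity. A dead-end branch attached at a vertex of $T^\infty$ again carries no current, because the optimal potential is constant on it. So the minimizing energy is unchanged. Equivalently, in \eqref{eq:capacity-definition} the restriction of a competitor on $\wt T$ to $\widetilde{T^\infty}$ has no larger energy. Conversely, a competitor on $\widetilde{T^\infty}$ extends by a constant on each finite branch, which adds no energy and keeps compact support.

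Finally, for the equivalence of \eqref{eq:capacity-condition}, I would use the vertex form \eqref{eq:vertex-capacity-condition}. Every infinite connected $B\subseteq V$ contains a ray tail, which lies in $T^\infty$. Capacity is monotone and, as just shown, agrees on finite subsets, hence on their increasing unions by the supremum definition. Thus infinite capacity of all infinite connected subsets in one network transfers to the other. The main point needing care is the exhaustion argument at the start, specifically the claim that finite dead ends carry zero current; I would handle it by noting that in each finite exhaustion network the voltage on a dead-end branch is harmonic with a single boundary vertex, and the maximum principle then makes it constant.
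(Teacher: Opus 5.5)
Your proof is correct. Its core is the observation the paper also uses: a finite branch hanging from a single vertex of $T^\infty$ carries no current and costs no energy.

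The paper states this once, variationally. Extending a finitely supported function on $T^\infty$ by constants over the finite branches adds no energy and keeps finite support, and restriction from $T$ cannot increase energy. The Dirichlet principle then gives transience, equality of Green functions and equality of capacities at once, with no exhaustions to manage. You instead obtain the Green functions from finite exhaustions and the maximum principle, and use the variational argument only for capacities.

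Your route needs one point made explicit. The finite-stage networks agree up to dead ends only if each $V_n$ contains every finite branch attached at a vertex of $V_n\cap T^\infty$. For graph balls of $T$, this fails: a branch attached at a vertex of the outer sphere lies in the wired complement. That vertex then gets an extra edge to the sink, so the finite-stage voltages really differ. A compatible exhaustion exists by local finiteness and finiteness of the branches. Since the limit does not depend on the exhaustion, this costs nothing, but your sentence about exhaustions ``that contain the branch'' does not quite say it.

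For the final equivalence, the paper reduces to finite ray segments through Proposition~\ref{prop:ray-capacity}. You argue directly from the vertex form \eqref{eq:vertex-capacity-condition}, using the supremum over finite subsets, monotonicity, and the fact that every infinite connected vertex set contains a ray tail lying in $T^\infty$. This is equally valid and slightly more self-contained.
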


\begin{proof}
Extend a finitely supported function on $T^\infty$ constantly over
every attached finite branch. Only finitely many attachment vertices
have nonzero values, and each has finitely many finite branches, so
the extension still has finite support. It adds zero energy.
Conversely, restriction from $T$ to $T^\infty$ cannot increase
energy. Thus minimizing the energy over extensions of a function
on $T^\infty$ gives exactly its energy in the pruned tree.
The variational characterizations
give transience, equality of Green functions and equality of
capacities. The same argument proves transience of $T$ whenever
$T^\infty$ is transient. This is the finite-branch reduction used
also in~\cite[Section~1, (1.31)--(1.40)]{AS}. The inherited cable
tree is closed in $\wt T$, so it is proper.

The infinite rays are unchanged. Removing vertices outside
$T^\infty$ from a cutset leaves a cutset and cannot increase its
weight; conversely every cutset of $T^\infty$ is one of $T$.
Equality of Green functions proves equality of the infima in
\eqref{eq:DG}. The rays and their finite-segment capacities agree
in both trees, so Proposition~\ref{prop:ray-capacity} proves the
equivalence of \eqref{eq:capacity-condition}.
\end{proof}

\paragraph*{Currents and the analytic criterion.}

For $x\ne o$, let $a_x=C_{x,x^-}$ be the conductance from $x$ to
infinity through its parent side. The parallel law and Ohm's law give
\begin{equation}
 g(x,x)=\frac1{C_x^++a_x},\qquad
 \theta(x)=C_x^+g(o,x),\qquad
 \Lambda(x)=\frac{(C_x^+)^2}{C_x^++a_x}\le C_x^+
                       \quad(\theta(x)>0).
 \label{eq:general-current}
\end{equation}
These prove \eqref{eq:electrical-flow} and
\eqref{eq:current-factor}; when $C_x^+=0$, the current is zero.
The flow has finite electrical energy $g(o,o)$. Every infinite
ray has infinite total resistance, since its cable length is
infinite by properness. If its current had a positive limiting
value, the energy along that ray would be infinite. Hence
\begin{equation}
 \theta(\xi_n)\longrightarrow0\qquad(\xi\in\partial T).
 \label{eq:harmonic-nonatomic}
\end{equation}
In particular, the denominator of
\eqref{eq:analytic-ray-condition} tends to infinity on every ray
whose vertices have positive current.

\begin{proof}[Proof of Theorem~\ref{thm:half-analytic}]
Theorem~1.1(1) of~\cite{DPR} and
\eqref{eq:capacity-condition} imply that cable Gaussian sign
clusters are bounded. By Lupu's coupling~\cite[Proposition~2.1]{Lupu16},
the same holds for loop clusters at intensity $1/2$. Monotonicity
gives $\ac\ge1/2$, and \eqref{eq:classification} gives $D_G\le1$.
Now \eqref{eq:half-condition} follows from the definition of $D_G$
and \eqref{eq:classification}.

By \eqref{eq:energy-series}, the electrical energy of a unit flow
in the network \eqref{eq:analytic-resistances} is
$\mathcal E_\beta(\mu)-1$. If the cutset condition in
\eqref{eq:half-condition} holds, then $D_G\ge1$, and
Lemma~\ref{lem:frostman} gives a finite-energy unit flow for every
$\beta<1$. Conversely, for any unit flow with finite
$\mathcal E_\beta(\mu)$ and any cutset $\Pi$, the pairs of rays
passing through the same $x\in\Pi$ give, by Cauchy--Schwarz,
\begin{equation}
 \mathcal E_\beta(\mu)\ge
 \sum_{x\in\Pi}\frac{\mu(x)^2}{\rho_o(x)^\beta}
 \ge\left(\sum_{x\in\Pi}\rho_o(x)^\beta\right)^{-1}.
 \label{eq:energy-cutset-converse}
\end{equation}
Electrical transience is equivalent to existence of a finite-energy
unit flow~\cite[Theorem~2.11]{LP}. This proves the claimed equivalence.

Suppose now that \eqref{eq:analytic-ray-condition} holds on a set
of positive harmonic measure. Fix $0<s<1$ and choose $\eta>0$
with $s(1+\eta/2)\le1$. By \eqref{eq:harmonic-nonatomic}, for
some finite $A\ge1$, the set
\begin{equation}
 B_A=\{\xi:\theta(\xi_n)>0,\quad
       \Lambda(\xi_n)\le A\theta(\xi_n)^{-\eta}
                         \text{ for every }n\ge1\}
 \label{eq:analytic-good-rays}
\end{equation}
has positive harmonic measure. Restrict $\theta$ to $B_A$ and
normalize to obtain a unit flow $\mu$. At every vertex with
$\mu(x)>0$, \eqref{eq:current-factor} implies
\begin{equation}
 \rho_o(x)^s\ge (Ag(o,o))^{-s/2}\theta(x)^{s(1+\eta/2)}
 \ge (Ag(o,o))^{-s/2}\theta(B_A)\mu(x).
 \label{eq:analytic-flow-bound}
\end{equation}
The root cutset has weight one. All other cutsets have weight at
least $(Ag(o,o))^{-s/2}\theta(B_A)>0$ by conservation of $\mu$.
This proves \eqref{eq:half-condition} for every $s<1$.
\end{proof}

\begin{proposition}
\label{prop:degree}
Assume \eqref{eq:capacity-condition}. If there are $\kappa\ge0$
and $0<A<\infty$ such that
\begin{equation}
 \Lambda(x)\le A\theta(x)^{-\kappa}
             \qquad(x\ne o,\ \theta(x)>0),
 \label{eq:conductance-assumption}
\end{equation}
then
\begin{equation}
 D_G(T)\ge\frac2{2+\kappa},\qquad
 \frac12\le\ac(T)\le\frac{1+\kappa}{2+\kappa}.
 \label{eq:degree-conclusion}
\end{equation}
For unit conductances it suffices that
\begin{equation}
 \de(x)\le A\theta(x)^{-\kappa}
             \qquad(x\ne o,\ \theta(x)>0).
 \label{eq:degree-assumption}
\end{equation}
If \eqref{eq:conductance-assumption} holds for every $\kappa>0$,
with $A$ depending on $\kappa$, then $\ac(T)=1/2$.
\end{proposition}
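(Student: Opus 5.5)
The argument follows the last part of the proof of Theorem~\ref{thm:half-analytic}, with harmonic measure itself as the flow. Since $\theta$ is a unit flow with $\sum_{x\in\Pi}\theta(x)=1$ for every cutset $\Pi$, a lower bound $\rho_o(x)^s\ge c\,\theta(x)$ at all vertices with $\theta(x)>0$ gives $\sum_{x\in\Pi}\rho_o(x)^s\ge c$. Vertices with zero current contribute nonnegatively, so the bound need not hold there.

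By \eqref{eq:current-factor} and \eqref{eq:conductance-assumption}, for $x\ne o$ with $\theta(x)>0$,
\[
 \rho_o(x)^2=\frac{\theta(x)^2}{g(o,o)\Lambda(x)}\ge\frac{\theta(x)^{2+\kappa}}{A\,g(o,o)},
\]
so $\rho_o(x)^s\ge(Ag(o,o))^{-s/2}\theta(x)^{s(2+\kappa)/2}$. With $s=2/(2+\kappa)$ the exponent of $\theta(x)$ is one. Every cutset other than $\{o\}$ therefore has weight at least $(Ag(o,o))^{-1/(2+\kappa)}$, and $\{o\}$ has weight one. Hence the cutset infimum is positive at $s=2/(2+\kappa)$, and $D_G(T)\ge 2/(2+\kappa)$.

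By Theorem~\ref{thm:classification}, $\ac(T)=1-D_G/2\le 1-1/(2+\kappa)=(1+\kappa)/(2+\kappa)$. The lower bound $\ac\ge1/2$ comes from \eqref{eq:capacity-condition} via Theorem~\ref{thm:half-analytic}. If \eqref{eq:conductance-assumption} holds for every $\kappa>0$, then letting $\kappa\downarrow0$ gives $\ac(T)\le1/2$, hence $\ac(T)=1/2$. Under \eqref{eq:capacity-condition} this also follows directly from the $\kappa\to0$ limit of the displayed bound together with \eqref{eq:half-condition}.

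In the unit-conductance case, \eqref{eq:general-current} gives $\Lambda(x)\le C_x^+$. Each child branch contributes $C_{x,y}\le c_{xy}=1$ by \eqref{eq:forward-conductances}, so $C_x^+\le\de(x)-1\le\de(x)$. Thus \eqref{eq:degree-assumption} implies \eqref{eq:conductance-assumption} with the same $A$ and $\kappa$.

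None of the steps presents a real obstacle. The only care needed is in the cutset bound: vertices with $\theta(x)=0$ must be excluded from the comparison, and the harmonic measure of $\Pi$ counted only over vertices of positive current must still total one. This holds because zero-current vertices carry no flow in \eqref{eq:flow-conservation}.
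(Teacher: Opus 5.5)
Your proof is correct and follows the paper's argument. Both derive $\rho_o(x)^s\ge (Ag(o,o))^{-s/2}\theta(x)$ at $s=2/(2+\kappa)$ from \eqref{eq:current-factor}, sum it against harmonic measure over cutsets other than $\{o\}$, obtain the intensity bounds from Theorem~\ref{thm:half-analytic} and \eqref{eq:classification}, and handle unit conductances through $\Lambda(x)\le C_x^+\le\de(x)-1$.
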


\begin{proof}
Equation~\eqref{eq:current-factor} gives
$\rho_o(x)\ge c\theta(x)^{1+\kappa/2}$ on the support of
$\theta$, where $c=(Ag(o,o))^{-1/2}$. At
$s=2/(2+\kappa)$, summing over any cutset distinct from $\{o\}$
gives weight at least $c^s\sum_{x\in\Pi}\theta(x)=c^s$.
The root cutset has weight one. This proves the lower bound for
$D_G$; Theorem~\ref{thm:half-analytic} and
\eqref{eq:classification} prove the intensity bounds. For unit
conductances, \eqref{eq:general-current} gives
$\Lambda(x)\le C_x^+\le\de(x)-1$, proving the degree assertion.
\end{proof}

For unit conductances, Theorem~\ref{thm:half-analytic} also gives
$D_G(T)=1$ and $\ac(T)=1/2$ under
\eqref{eq:capacity-condition} whenever a set of rays of positive
harmonic measure satisfies
\begin{equation}
 \frac{\log\de(\xi_n)}{\log(1/\theta(\xi_n))}
                        \longrightarrow0.
 \label{eq:ray-assumption}
\end{equation}
\paragraph*{Unit conductances and minimum degree three.}

Under \eqref{eq:standing}, every forward branch contains a binary
unit tree, which implies transience; fixed cable lengths give properness.

\begin{lemma}
\label{lem:electrical}
Under \eqref{eq:standing}, write $d_x=\de(x)$. For $x\ne o$,
and for adjacent vertices $u,v$, we have
\begin{gather}
 \frac12\le C_{u,v}\le1\qquad(u\sim v),
                                                    \label{eq:branch-conductance}\\
 \frac{d_x-1}{2}\le C_x^+\le d_x-1,\qquad
 \frac12\le a_x\le1,                               \label{eq:C}\\
 \frac{\theta(x)^2}{g(o,o)d_x}\le\rho_o(x)^2
       \le\frac{6\theta(x)^2}{g(o,o)d_x}\quad(x\ne o),\label{eq:current}\\
 \rho(u,v)\le\frac12\quad(u\sim v),\qquad
 \rho_o(x)\le2^{-|x|},\qquad
 \theta(x)\le(2/3)^{|x|}.                            \label{eq:decays}
\end{gather}
\end{lemma}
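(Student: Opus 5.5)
The proof will be a chain of elementary electrical estimates, each feeding the next: first the branch conductances, then $C_x^+$ and $a_x$, then \eqref{eq:current}, and finally the decay bounds.

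\textbf{Branch conductances.} For adjacent $u,v$, the series law \eqref{eq:forward-conductances} with unit conductance gives
\[
C_{u,v}=\frac{C'}{1+C'},
\]
where $C'$ is the conductance from $v$ to infinity in the component beyond $v$. Every forward branch contains a binary unit tree, and the binary unit tree rooted at its root has conductance to infinity equal to one: the equal-splitting flow has energy $\sum_k 2^{-k-1}=1$, and this is optimal by symmetry. Rayleigh monotonicity therefore gives $C'\ge1$. Since $t\mapsto t/(1+t)$ increases from $1/2$ at $t=1$ towards $1$, we obtain $\tfrac12\le C_{u,v}\le1$, which is \eqref{eq:branch-conductance}.

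\textbf{The bounds \eqref{eq:C}.} For $x\ne o$, $C_x^+$ is a sum of $d_x-1$ child terms, each in $[1/2,1]$. The quantity $a_x=C_{x,x^-}$ is itself a branch conductance. Both bounds in \eqref{eq:C} follow at once.

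\textbf{The two-sided bound \eqref{eq:current}.} This comes from \eqref{eq:current-factor} and \eqref{eq:general-current}, which give
\[
\rho_o(x)^2=\frac{\theta(x)^2}{g(o,o)\Lambda(x)},\qquad \Lambda(x)=\frac{(C_x^+)^2}{C_x^++a_x}.
\]
So it suffices to show $d_x/6\le\Lambda(x)\le d_x$. The upper bound is immediate from $\Lambda\le C_x^+\le d_x-1$. For the lower bound, $t\mapsto t^2/(t+a)$ is increasing and $a_x\le1$, so
\[
\Lambda(x)\ge\frac{((d_x-1)/2)^2}{(d_x-1)/2+1}=\frac{(d_x-1)^2}{2(d_x+1)}.
\]
With $d_x\ge3$ we need $3(d_x-1)^2\ge d_x(d_x+1)$, that is, $2d_x^2-7d_x+3\ge0$. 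This holds since $d_x\ge3$ and the larger root is $3$, giving constant $6$.

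\textbf{The decay bounds \eqref{eq:decays}.} For an edge from $p=x^-$ to $x$, \eqref{eq:hitting} and \eqref{eq:edge-hitting-green} give
\[
\rho(p,x)^2=H(x,p)H(p,x),\qquad H(x,p)=\frac{1}{1+C_x^+}\le\frac13,
\]
since $C_x^+\ge(d_x-1)/2\ge1$.

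The one real point to check is the factor $H(p,x)$, which must not spoil the product. By symmetry, $H(p,x)=1/(1+C)$ with $C$ the conductance from $p$ to infinity avoiding the edge $px$. That conductance is a sum of at least $\de(p)-1\ge2$ branch terms, each at least $1/2$, so $C\ge1$ and $H(p,x)\le1/2$. Hence $\rho^2\le1/6\le1/4$, which gives $\rho(u,v)\le\frac12$; the orientation does not matter since $\rho$ is symmetric. Then \eqref{eq:multiplicative} yields $\rho_o(x)\le2^{-|x|}$.

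For the current, $\theta(x)=\theta(p)\,C_{p,x}/C_p^+$. The sibling branches contribute at least $1/2$ while $C_{p,x}\le1$, so $C_{p,x}/C_p^+\le1/(1+1/2)=2/3$. Iterating along $[o,x]$ gives $\theta(x)\le(2/3)^{|x|}$; at the root, $C_o^+$ has at least three terms, which only helps.
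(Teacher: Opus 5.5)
Your proof is correct and follows the paper's argument essentially step by step. The steps match: a binary unit subtree of conductance one with the series law, the same inequality $2d_x^2-7d_x+3\ge0$ behind the constant $6$ in \eqref{eq:current}, both hitting probabilities bounded by $1/2$, and the $2/3$ current split.

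There is one arithmetic slip to fix. $C_x^+\ge1$ only gives $H(x,p)=1/(1+C_x^+)\le1/2$, not $1/3$. On the $3$-regular tree $C_x^+=1$ and $\rho(p,x)=1/2$ exactly, so your intermediate bound $\rho^2\le1/6$ is false. Still, $H(x,p)H(p,x)\le1/4$ gives $\rho(u,v)\le1/2$ as claimed.
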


\begin{proof}
Each component beyond an oriented edge contains a rooted binary
unit tree, whose conductance is one. In series with the initial
unit edge it has conductance in $[1/2,1]$. This proves
\eqref{eq:branch-conductance}, and summing the appropriate branches
gives \eqref{eq:C}. The ratio $(C_x^++a_x)/(C_x^+)^2$ lies between
$1/d_x$ and $2/(d_x-1)+4/(d_x-1)^2\le6/d_x$.
Equations~\eqref{eq:current-factor} and \eqref{eq:general-current}
therefore give \eqref{eq:current}.
For adjacent $x,y$, the conductance from $x$ through its other
branches is at least one, so $H(x,y)\le1/2$. Equations
\eqref{eq:hitting} and \eqref{eq:multiplicative} give the bounds
on $\rho$. Finally, at least two forward branches share the incoming
current, each with conductance in $[1/2,1]$. No child receives more
than $2/3$ of that current, proving the bound on $\theta$.
\end{proof}

Every vertex of a ray has an off-ray branch with conductance at
least $1/2$ by \eqref{eq:branch-conductance}. Thus
Proposition~\ref{prop:ray-capacity} verifies
\eqref{eq:capacity-condition}, and Theorem~\ref{thm:half-analytic} gives
\begin{equation}
 \begin{gathered}
 D_G(T)\le1,\qquad \ac(T)\ge\frac12,\\
 \Pp_{1/2}^T(\text{there is an unbounded cluster})=0.
 \end{gathered}
 \label{eq:unit-lower-bound}
\end{equation}
Also under \eqref{eq:standing}, one obtains
\begin{equation}
 \log\max_{|x|=n}\de(x)=o(n)
             \quad\Longrightarrow\quad\ac(T)=\frac12.
 \label{eq:subexponential-degrees}
\end{equation}
Indeed, \eqref{eq:decays} gives
$\log(1/\theta(\xi_n))\ge n\log(3/2)$, so
\eqref{eq:ray-assumption} holds on every ray.

\paragraph*{Spherically symmetric trees with unit conductances.}

\begin{proof}[Proof of Corollary~\ref{cor:symmetric}]
Use the notation in \eqref{eq:symmetric-parameters}.
Symmetry gives $\theta(x)=N_n^{-1}$ at depth $n$.
Equation \eqref{eq:current} yields
$\rho_o(x)=\rho_n\asymp(N_n\sqrt{b_n})^{-1}$, uniformly in $n\ge1$.
Equation~\eqref{eq:spherical-cutsets} now gives
\begin{equation}
 D_G=\liminf_n\frac{\log N_n}{-\log\rho_n}
 =\liminf_n\frac{\log N_n}
                  {\log N_n+\frac12\log b_n+O(1)}
 =\frac2{2+r}.
 \label{eq:symmetric-exponent}
\end{equation}
Equation~\eqref{eq:classification} now gives \eqref{eq:symmetric}.
\end{proof}

\begin{proof}[Proof of Corollary~\ref{cor:fixed-branching}]
Fix $r\in[0,\infty]$. Set $r_k=r$ when $r<\infty$, and $r_k=k$
when $r=\infty$. We take $b_0=3$ and choose increasing exceptional
generations $n_k\ge1$, with offspring counts
\begin{equation}
 b_n=
 \begin{cases}
  \max\{2,\lceil N_n^{r_k}\rceil\},&n=n_k,\\
  2,&n\ge1\text{ and }n\notin\{n_k:k\ge1\}.
 \end{cases}
 \label{eq:sparse-branching}
\end{equation}
The indices are chosen recursively, starting after generation zero.
Before inserting the $k$th exceptional generation, continue with
binary branching until a new index $n_k$ satisfies
\begin{equation}
 0\le\frac{\log N_{n_k}}{n_k}-\log2\le\frac1k.
 \label{eq:binary-recovery}
\end{equation}
Such an index exists: if binary branching continues from a fixed
generation $m$, then $N_n=N_m2^{n-m}$ and
$n^{-1}\log N_n\to\log2$. The lower bound in
\eqref{eq:binary-recovery} holds at every generation since
$N_n\ge3\cdot2^{n-1}$. All offspring counts are finite and at least
two, so the tree satisfies \eqref{eq:standing}.

The uniform cylinder flow used in \eqref{eq:spherical-cutsets},
applied to the weights $\lambda^{-n}$, gives
\begin{equation}
 \operatorname{br}(T)=\liminf_{n\to\infty}N_n^{1/n}=2,
 \qquad
 \limsup_{n\to\infty}\frac{\log b_n}{\log N_n}=r.
 \label{eq:sparse-branching-exponents}
\end{equation}
Here the branching number is two by
\eqref{eq:binary-recovery}. For the second assertion, the quotient tends to
zero along ordinary generations; at $n_k$ it equals $r_k+o(1)$.
Corollary~\ref{cor:symmetric} now gives
$\ac=(1+r)/(2+r)$, with value one when $r=\infty$.
These values exhaust $[1/2,1]$.
\end{proof}

For finite $r$, \eqref{eq:sparse-branching} also attains the upper
bound in \eqref{eq:degree-conclusion} with $\kappa=r$, since
$\theta(x)=N_{|x|}^{-1}$ and $\de(x)\le A N_{|x|}^r$ for a
finite constant $A$. At $r=\infty$, all clusters are bounded for
every fixed $\alpha<1$, whereas the full cable tree is connected
at $\alpha=1$. Thus neither finiteness nor the value of the
branching number prevents the obstruction to percolation above
one-half.

\begin{proof}[Proof of Corollary~\ref{cor:gff-separation}]
By \eqref{eq:capacity-condition} and the absence of killing,
Theorem~1.1(2) of~\cite{DPR} gives $\wt h_*(T)=0$.
\end{proof}

\subsection{Galton--Watson trees}
\label{sec:GW-proof}

We prove Theorem~\ref{thm:GW} using the deterministic criteria from
Subsection~\ref{sec:unit}. We first prune finite branches and verify
\eqref{eq:capacity-condition} for the remaining tree. If its
offspring law has finite logarithmic moment, we control degrees
and currents along harmonic rays to verify
\eqref{eq:analytic-ray-condition}. Otherwise, flows on subtrees
with bounded offspring give the cutset condition in
\eqref{eq:half-condition}.

\paragraph*{The tree of infinite descendants.}

Let $f(s)=\Et s^Z$ and let $q<1$ be the extinction probability of
the ordinary Galton--Watson tree with law $\nu$. Conditional on
survival, its infinite-descendant subtree has offspring law
$\bar\nu$, with generating function
\begin{equation}
 \bar f(s)=\frac{f(q+(1-q)s)-q}{1-q}.
 \label{eq:surviving-law}
\end{equation}
Indeed, given $Z$, the number of children with infinite descent has
law $\operatorname{Binomial}(Z,1-q)$, and one conditions it to be
positive. Conditional on which children survive, their descendant
trees are independent ordinary trees conditioned on survival.
Iterating proves \eqref{eq:surviving-law} and the branching property
without a moment assumption; compare~\cite[(5.3)]{AS}.
In particular $\bar\nu(0)=0$ and
$\bar\nu(1)=f'(q)<1$.

For our root convention, the number $J$ of surviving root children
has the law of $\operatorname{Binomial}(K,1-q)$ conditioned to be
positive, where the binomial mixture is taken before conditioning.
The surviving descendant trees are independent of $J$ and have
offspring law $\bar\nu$. Write $\Pt_\infty$ and $\Et_\infty$ for
this pruned tree law and its expectation, and $Y$ for an offspring
variable with law $\bar\nu$. Thus
\begin{equation}
 1\le Y<\infty\quad\Pt_\infty\text{-almost surely},\qquad
 \Pt_\infty(Y\ge2)>0.
 \label{eq:leafless-law}
\end{equation}
The root has an almost surely finite positive number $J$ of children.

Every descendant tree is transient almost surely. Retaining the
first $\min\{Y,2\}$ children at each vertex gives a Galton--Watson
tree without leaves and with bounded offspring mean
$1+\Pt_\infty(Y\ge2)>1$. Its branching number equals this mean
by~\cite[Corollary~5.10]{LP}. A unit flow bounded exponentially
in depth exists by max-flow/min-cut: for some $\lambda>1$ and
$A<\infty$, it satisfies $\mu(x)\le A\lambda^{-|x|}$.
Its energy is at most $A\sum_{n\ge1}\lambda^{-n}<\infty$,
since the flow through each generation is one. Extend it by zero
to the original descendant tree to prove transience there as well.
By countability, this holds simultaneously at every existing vertex.
The same extension proves transience before pruning. Unit cable
lengths give properness, so Lemma~\ref{lem:pruning} applies.

Until the final proof of Theorem~\ref{thm:GW}, write $T$ for the
tree with law $\Pt_\infty$. Green functions and currents refer to
this whole pruned tree, including when smaller subtrees are retained.
For a nonroot vertex $x$, let $Y_x$ be its number of children and put
\begin{equation}
 W_x=\frac{C_x^+}{1+C_x^+}\in(0,1),\qquad
 C_x^+=\sum_{y:y^-=x}W_y\le Y_x.
 \label{eq:GW-conductance}
\end{equation}
Let $(C,W)$ have the law of the descendant conductance and its
transform $W=C/(1+C)$ in an ordinary tree with offspring law
$\bar\nu$.

\paragraph*{The capacity assumption.}

\begin{lemma}
\label{lem:GW-capacity}
For $\Pt_\infty$-almost every tree,
\eqref{eq:ray-capacity-condition}, and hence
\eqref{eq:capacity-condition}, holds.
\end{lemma}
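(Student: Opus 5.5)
We need to show that for $\Pt_\infty$-almost every tree, $\sum_n s_n(\xi)=\infty$ along every ray $\xi$. The difficulty is that there are uncountably many rays, so a per-ray almost sure statement is not enough. The plan is to turn divergence along all rays into the almost sure extinction of an auxiliary Galton--Watson process of ``bad'' vertices.

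The first step uses unit conductances and \eqref{eq:forward-conductances}. For a child $y$ of $x$ we have $C_{x,y}=W_y$, so $s_n(\xi)=\sum_{y^-=\xi_n,\,y\ne\xi_{n+1}}W_y$. Every vertex of the pruned tree has an infinite transient descendant tree, so $W_y>0$ almost surely for every $y$. We fix $\epsilon>0$ with $\Pt_\infty(W\ge\epsilon)>0$, which exists since $W>0$ almost surely. Call a vertex $x$ \emph{good} if it has at least two children $y$ with $W_y\ge\epsilon$. At a good vertex every ray through $x$ has at least one off-ray child with $W_y\ge\epsilon$, so $s_{|x|}(\xi)\ge\epsilon$. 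It therefore suffices to show that almost surely every ray passes through infinitely many good vertices.

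The second step is the reduction to extinction. Suppose some ray has only finitely many good vertices. Then some vertex $v$ has a ray from $v$ avoiding good vertices entirely. By countability of vertices and the branching property (descendant trees are i.i.d.\ with law $\bar\nu$), it is enough to show that in an ordinary tree with offspring law $\bar\nu$, the subtree $\mathcal B$ of non-good vertices connected to the root through non-good vertices is almost surely finite. The event $\{x\text{ good}\}$ depends only on $x$'s children and their descendant trees, so it is not independent of the subtree below. I would handle this with an explicit recursion. Let $\beta$ be the probability that the root's tree contains an infinite ray of non-good vertices starting at the root. Conditioning on $Y$ and on the descendant trees of the children, this ray must continue into a child $y$ whose own tree has such a ray. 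The root is non-good, so at most one child has $W\ge\epsilon$. Hence
\[
\beta\le \Et_\infty\Bigl[\ind_{\{\#\{y:W_y\ge\epsilon\}\le1\}}\sum_{y}\ind_{\{y\text{ starts a bad ray}\}}\Bigr].
\]
I expect this to be the main obstacle, because of the dependence between $W_y$ and the event that $y$ starts a bad ray. The event ``$y$ starts a bad ray'' is an event $E$ about the tree $T_y$, with $\Pt_\infty(E)=\beta$, and the indicator constraint involves the $W$'s of all the children. With $Y=k$, I would bound this term by the union over children. The case where $y$ is the unique child with $W\ge\epsilon$ and the case where $y$ has $W_y<\epsilon$ and all other children have $W<\epsilon$ together give
\[
\beta\le \Et_\infty\bigl[Y\,\Pt(W<\epsilon)^{Y-1}\bigr]\beta+\dots
\]
This is not obviously less than $1$. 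A cleaner route is to show $\beta=0$ by a 0-1 argument: $\beta$ is the largest fixed point of a monotone recursion $\beta=\Phi(\beta)$, and it should be compared with the corresponding quantity for $\epsilon\downarrow0$. As $\epsilon\downarrow0$, non-good means at most one child in total, that is $Y=1$, and the probability of an infinite ray with $Y=1$ everywhere is $\lim \bar\nu(1)^n=0$ since $\bar\nu(1)=f'(q)<1$. I would make this rigorous by letting $\epsilon_j\downarrow0$. The bad-ray events decrease in $j$, and on their intersection there is a ray along which, for every $j$, the vertices are eventually $\epsilon_j$-non-good. This must be quantified carefully; the simpler fallback is to choose $\epsilon$ depending on the vertex.

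The alternative I would actually try first avoids uniform $\epsilon$ altogether. Divergence of $\sum_n s_n(\xi)$ fails only if $\sum_n s_n<\infty$. Since $s_n\ge W_y$ for an off-ray child and $W_y\ge W_y'$, where $W_y'$ is the conductance of the binary sub-branching tree from the previous transience paragraph, the question reduces to a ray-sum in a tree carrying i.i.d.-type positive marks. I would then apply the first-moment/cutset bound: the expected number of vertices at depth $n$ on rays with only $n/2$ ``good'' steps is at most $m^n\Pt(\text{non-good})^{n/2}$-type. This only works with bounded offspring, so I would first truncate to $\min\{Y,M\}$ children chosen so that the truncated non-good probability makes the branching of bad vertices subcritical. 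Truncation does not hurt, because the $s_n$ of the full tree dominate those computed from retained children. This sidesteps infinite means and gives almost sure finiteness of every bad subtree, hence \eqref{eq:ray-capacity-condition}, and by Lemma~\ref{lem:pruning} and Proposition~\ref{prop:ray-capacity} also \eqref{eq:capacity-condition}.
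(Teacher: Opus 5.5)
Your first step is fine: $C_{x,y}=W_y$ for unit conductances, $W_y>0$ almost surely, and a countable union over starting vertices suffices. But neither of your routes closes the argument.

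In the good-vertex route, whether a vertex is non-good depends on the $W$ of the on-ray child. That variable is a function of the very subtree in which the bad ray continues. This is why your recursion only yields $\beta\le\Et_\infty[Yp^{Y-1}]\beta+\dots$ with a remainder you cannot control; it involves $\Pt_\infty(\{W<\epsilon\}\cap E)$ with a factor like $\Et_\infty[Y(Y-1)p^{Y-2}]$.

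Letting $\epsilon\downarrow0$ does not repair this. For rays from a fixed vertex, compactness gives at most $\beta(\epsilon)\to0$. Your reduction, however, needs $\beta(\epsilon)=0$ for one fixed $\epsilon$. If $\beta(\epsilon)>0$, independence over generations whose sizes tend to infinity produces an $\epsilon$-non-good ray from some vertex almost surely. For tails of rays the starting vertex may move with $\epsilon_j$, so the intersection over $j$ does not produce a ray with $Y=1$ throughout.

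The truncation route has a different defect. Keeping $\min\{Y,M\}$ children controls only rays of the truncated tree, while \eqref{eq:ray-capacity-condition} must hold for every ray of $T$. For rays through discarded children, domination of $s_n$ by truncated quantities says nothing. The motivation for truncating is also mistaken, since the first-moment bound below needs no bound on the offspring.

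The missing idea is to attach badness to steps and make it depend only on siblings: call $x\to y$ bad if every child $z\ne y$ of $x$ has $W_z<b$. This event is a function of the sibling subtrees, hence independent of $T_y$. So the number $N_n$ of length-$n$ bad paths from the root of an ordinary tree satisfies exactly $\Et_\infty N_n=a^n$, with $a=\Et_\infty[Yp^{Y-1}]$ and $p=\Pt_\infty(W<b)$. This is the first term of your own recursion, now with no remainder. Since $p<1$, $a$ is finite even when $\Et_\infty Y=\infty$, and $a\to\bar\nu(1)=f'(q)<1$ as $b\downarrow0$. Hence for small $b$ there is almost surely no infinite bad path from any nonroot vertex, so every ray has infinitely many non-bad steps. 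A non-bad step at $\xi_n$ gives $s_n(\xi)\ge b$. Good vertices in your sense are not needed.
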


\begin{proof}
Since $W>0$ almost surely, choose $b>0$ so small that, with
$p=\Pt_\infty(W<b)$,
\begin{equation}
 a=\bar f'(p)=\Et_\infty[Yp^{Y-1}]<1.
 \label{eq:bad-branch-mean}
\end{equation}
This is possible because $p\downarrow0$ as $b\downarrow0$ and
$\bar f'(p)\to\bar\nu(1)<1$. The derivative is finite for
$p<1$, even when the mean is infinite. At $p=0$ the $Y=1$ term
has value one.

Call a downward step $x\to y$ bad if every other child $z$ of
$x$ has $W_z<b$. Let $N_n$ be the number of length-$n$ paths of
bad steps from the root of an ordinary tree with offspring $Y$.
Then
\begin{equation}
 \Et_\infty N_n=a^n.
 \label{eq:bad-path-count}
\end{equation}
Conditional on $Y=k$, sum over the first child. The other $k-1$
descendant trees satisfy $W<b$ independently with probability $p$,
and are independent of the path count in the chosen child's tree.
Thus $\Et_\infty N_n=\Et_\infty[Yp^{Y-1}]\Et_\infty N_{n-1}$,
with $N_0=1$. Tonelli's theorem applies without a finite mean.

An infinite path of bad steps would imply $N_n\ge1$ for every
$n$, an event of probability zero by \eqref{eq:bad-path-count}.
Apply this at every nonroot vertex. Every infinite ray
therefore has infinitely many vertices with an off-ray branch
of conductance at least $b$. This proves
\eqref{eq:ray-capacity-condition}; apply
Proposition~\ref{prop:ray-capacity}.
\end{proof}

\subsubsection{Finite logarithmic moment}
\label{sec:finite-log}

We first obtain a conditional current comparison and an exponential
bound on the masses of cylinders under harmonic measure. Both
estimates hold without a moment assumption.
Write $\theta_T$ for the electrical unit flow in
\eqref{eq:electrical-flow} to emphasize its dependence on the tree.
By \eqref{eq:forward-conductances}, the current at a vertex splits
between its children in proportion to their branch conductances:
\begin{equation}
 \frac{\theta_T(y)}{\theta_T(x)}=\frac{W_y}{C_x^+}
       \quad(y^-=x).
 \label{eq:GW-flow}
\end{equation}

Let $\mathcal U=\bigcup_{n\ge0}\{1,2,\ldots\}^n$ be the set of
Ulam--Harris labels, with $o$ the empty label. Assign independent
offspring $Y_x$ with law $\bar\nu$ to all nonempty labels,
independently of $J$. The root has children $1,\ldots,J$;
an existing nonroot label $x$ with $Y_x=k$ has children $x1,\ldots,xk$.
Juxtaposition denotes concatenation of labels.
For $x\ne o$, put $\mathcal U_x=\{xy:y\in\mathcal U\}$ and define
\begin{equation}
 \mathcal F_x=\sigma(J,Y_y:y\in\mathcal U\setminus\mathcal U_x,
                                      \ y\ne o).
 \label{eq:outside-offspring}
\end{equation}
Existence of $x$ is $\mathcal F_x$-measurable. Put $\theta_T(x)=0$ if
$x$ does not exist; its hypothetical descendant tree and the variable
$W_x$ are still defined by the independent labels. The descendant
variables at and below $x$ are independent of $\mathcal F_x$ and
generate an ordinary Galton--Watson tree with offspring distribution
$\bar\nu$.

\begin{lemma}
\label{lem:terminal}
There is an $\mathcal F_x$-measurable $U_x\in[0,1]$ such that
\begin{equation}
 U_xW_x\le\theta_T(x)\le U_x.
 \label{eq:conditional-flow-comparison}
\end{equation}
In particular, if $\kappa=\Et_\infty W>0$, then, for every
$A\subseteq\{1,2,\ldots\}$ and $n\ge1$,
\begin{equation}
 \Et_\infty\sum_{|x|=n}\theta_T(x)\ind_{\{Y_x\in A\}}
       \le\kappa^{-1}\Pt_\infty(Y\in A).
 \label{eq:harmonic-domination}
\end{equation}
\end{lemma}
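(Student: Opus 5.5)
Since $\theta_T(x)=C_x^+g(o,x)$ by \eqref{eq:electrical-flow}, the plan is to express $g(o,x)$ through the parent-side data, which are $\mathcal F_x$-measurable, and the single descendant quantity $C_x^+$. Assume $x$ exists, with parent $p=x^-$; otherwise set $U_x=0$. Let $\theta_T(x\mid\cdot)$ denote the current entering the branch at $x$. By \eqref{eq:GW-flow}, $\theta_T(x)=\theta_T(p)\,W_x/C_p^+$. Here $C_p^+=W_x+S_x$, where $S_x=\sum_{z^-=p,\,z\ne x}W_z$ is $\mathcal F_x$-measurable (the siblings' descendant trees lie outside $\mathcal U_x$). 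However, $\theta_T(p)$ depends on $W_x$ through the splitting at all ancestors. So I will not iterate; instead I will use a potential description.

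Wire everything outside the branch at $x$. The current into $x$ equals $W_x$ times the voltage at $p$, since the branch beyond the edge $px$ has conductance $W_x$ and potential zero at infinity. By \eqref{eq:edge-hitting-green}, $g(o,x)=H(x,p)g(o,p)=\frac{1}{1+C_x^+}g(o,p)$, so $\theta_T(x)=W_x\,g(o,p)$. The voltage $g(o,p)$ is determined by the network as seen from outside: it is the unit-current potential at $p$ in a network consisting of the $\mathcal F_x$-measurable part of the tree, with the branch at $x$ replaced by a single conductance $W_x\in(0,1)$ from $p$ to infinity. Monotonicity (Rayleigh) shows that $g(o,p)$ is decreasing in $W_x$. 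Hence $g(o,p)\le V_x^{0}$, where $V_x^0$ is the voltage at $p$ when the branch conductance is set to $0$. This is still a transient network, since $p$ has other surviving children or an ancestor branch; if needed I can instead use the conductance-$\epsilon$ limit, which is $\mathcal F_x$-measurable. Likewise $g(o,p)\ge V_x^{1}$, the voltage with branch conductance $1$.

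Then $\theta_T(x)=W_xg(o,p)\in[W_xV_x^1,\,W_xV_x^0]$. Set $U_x=V_x^{1}$ and check that $\theta_T(x)\le U_x$. The upper bound $\theta_T(x)\le U_x$ holds because, with a unit-conductance branch, the current into $x$ is $V_x^1\cdot1$. Increasing the branch conductance from $W_x$ to $1$ increases the current into that branch: current to a branch is monotone in its own conductance, by the Thomson principle or by explicit series-parallel reduction of the rest as a single effective network seen from the edge. So $\theta_T(x)\le 1\cdot V_x^1=U_x$ and $\theta_T(x)\ge W_xV_x^1=W_xU_x$, using $g(o,p)\ge V_x^1$. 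Also $U_x\le1$, since it is a current in a unit flow. The main obstacle is making this current-monotonicity rigorous in the infinite network. I would do it by finite exhaustion and a Schur-complement reduction: the rest of the network, seen from terminals $\{o,p,\infty\}$, is a three-terminal network, and the current into the added branch is $\frac{w\,a}{w+b}$-type, increasing in $w$.

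For \eqref{eq:harmonic-domination}, use $\theta_T(x)\le U_x\le \theta_T(x)/W_x$; this gives $\Et_\infty[\theta_T(x)\ind_{\{Y_x\in A\}}]\le\Et_\infty[U_x\ind_{\{Y_x\in A\}}]$. Since $Y_x$ is independent of $\mathcal F_x$, this equals $\Et_\infty[U_x]\Pt_\infty(Y\in A)$. Next, $\Et_\infty[\theta_T(x)]\ge\Et_\infty[U_xW_x]=\Et_\infty[U_x]\kappa$, using independence of $W_x$ from $\mathcal F_x$. Summing over $|x|=n$ with $\sum_{|x|=n}\theta_T(x)=1$ gives $\sum\Et_\infty U_x\le\kappa^{-1}$, which yields \eqref{eq:harmonic-domination}.
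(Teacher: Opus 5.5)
Your proof is correct and is essentially the paper's argument. Your $U_x$, the current into a unit conductance from $p=x^-$ to ground, is the paper's $U_x=A_x/R_x$ (the probability of hitting $x$ from $o$ after the strict descendants of $x$ are removed), and your three-terminal formula $\theta_T(x)=W_xH(o,p)/(W_x+Q)$ is the paper's $\theta_T(x)=U_xR_xC_x^+/(1+R_xC_x^+)$ in other variables. The domination step is the same, and grounding at $p$ conveniently absorbs the paper's separate recurrent-path case.

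Two small repairs. First, justify the monotonicity of $g(o,p)$ in the branch conductance by $g(o,p)=H(o,p)g(p,p)$, where $H(o,p)$ is unaffected by conductance added at $p$; Rayleigh alone does not control transfer resistances. Second, drop the claim that the conductance-zero network is transient. It fails when the tree outside the branch at $x$ is only the path $[o,p]$, although you never use $V_x^0$.
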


\begin{proof}
Suppose $x$ exists and remove its strict descendants, obtaining a
network $S$. If $S$ is transient, write
$A_x=g_S(o,x)$ and $R_x=g_S(x,x)\ge1$. The latter bound follows
because $x$ is a leaf and its parent edge has resistance one. The
voltage equation at $x$ gives
\begin{equation}
 \theta_T(x)=\frac{A_xC_x^+}{1+R_xC_x^+}
       =U_x\frac{R_xC_x^+}{1+R_xC_x^+},\qquad U_x=\frac{A_x}{R_x}.
 \label{eq:terminal-current}
\end{equation}
Here $U_x$ is the probability of hitting $x$ from $o$ in $S$, so
$U_x\le1$. Since $R_x\ge1$, the last fraction lies between
$C_x^+/(1+C_x^+)=W_x$ and one.

If $S$ is not transient, it consists only of the finite path $[o,x]$:
any side branch would contain an almost surely transient descendant
tree. In this case all the unit current goes through $x$, so set
$U_x=1$. If $x$ does not exist, set $U_x=0$. This proves
\eqref{eq:conditional-flow-comparison} in every case.

Independence of the descendant tree and $\mathcal F_x$ now gives
\begin{align}
 \Et_\infty[\theta_T(x)\mid\mathcal F_x]&\ge\kappa U_x,\nonumber\\
 \Et_\infty[\theta_T(x)\ind_{\{Y_x\in A\}}\mid\mathcal F_x]
 &\le U_x\Pt_\infty(Y\in A)
 \le\kappa^{-1}\Pt_\infty(Y\in A)
                 \Et_\infty[\theta_T(x)\mid\mathcal F_x].
 \label{eq:conditional-mark}
\end{align}
Sum over the countable collection of labels at depth $n$ and use
Tonelli's theorem and conservation of flow.
\end{proof}

\begin{lemma}
\label{lem:harmonic-decay}
There is a deterministic $c>0$, depending only on the law of $Y$,
such that almost surely
\begin{equation}
 \max_{|x|=n}\theta_T(x)\le e^{-cn}
       \quad\text{for all sufficiently large }n.
 \label{eq:harmonic-decay}
\end{equation}
\end{lemma}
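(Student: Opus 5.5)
We bound, for a fixed small $c>0$, the expected number of heavy vertices
\[
M_n=\#\{x:|x|=n,\ \theta_T(x)>e^{-cn}\}
\]
and show that it decays exponentially in $n$. Borel--Cantelli then gives \eqref{eq:harmonic-decay}. Markov's inequality alone, $M_n\le e^{cn}\sum_{|x|=n}\theta_T(x)=e^{cn}$, is useless, so we need a multiplicative large-deviation bound along paths. By \eqref{eq:GW-flow}, a vertex $x$ with ancestors $o=x_0,x_1,\dots,x_n=x$ has
\[
\theta_T(x)=\theta_T(x_1)\prod_{j=1}^{n-1}\frac{W_{x_{j+1}}}{C_{x_j}^+}.
\]
Each ratio is at most one. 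The ratio equals one, up to the factor $W_{x_{j+1}}<1$, only when $x_j$ has a single child.

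We use a weighted first moment with a small exponent $\gamma\in(0,1)$:
\[
\Et_\infty\sum_{|x|=n}\theta_T(x)^\gamma .
\]
Markov's inequality then gives $\Pt_\infty(\max_{|x|=n}\theta_T(x)>e^{-cn})\le e^{c\gamma n}\,\Et_\infty\sum_{|x|=n}\theta_T(x)^\gamma$. It therefore suffices to show that $\Et_\infty\sum_{|x|=n}\theta_T(x)^\gamma\le C\lambda^n$ for some $\lambda<1$ and some $\gamma$, after which we choose $c<-\log\lambda/\gamma$.

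The difficulty is that the ratios along a path are not independent: $C_{x_j}^+$ involves $W_{x_{j+1}}$ together with the siblings of $x_{j+1}$. We compute the expectation recursively over the labels, as in the proof of \eqref{eq:bad-path-count}. Conditional on $Y_{x_j}=k$, and with the sibling variables $W_z$ for $z\ne x_{j+1}$ independent of the subtree of $x_{j+1}$, we bound
\[
\Bigl(\frac{W_{x_{j+1}}}{C_{x_j}^+}\Bigr)^\gamma\le\Bigl(\frac{1}{1+S}\Bigr)^\gamma,
\qquad S=\sum_{z\ne x_{j+1}}W_z\Big/W_{x_{j+1}}\ge\sum_{z\ne x_{j+1}}W_z .
\]
This uses $W\le1$ and drops the dependence on the chosen child's subtree. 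Summing over the $k$ children gives a one-step factor
\[
\phi(\gamma)=\Et_\infty\Bigl[\sum_{i=1}^Y\bigl(1+\textstyle\sum_{l\ne i}W_l\bigr)^{-\gamma}\Bigr],
\]
with the $W_l$ i.i.d.\ copies of $W$ independent of $Y$. The factor $\phi(\gamma)$ multiplies at each depth, by Tonelli and the independence of the descendant trees, which is the same structure as in Lemma~\ref{lem:terminal}. The root and depth-one factors are absorbed into a constant.

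The main obstacle is showing $\phi(\gamma)<1$ for some $\gamma>0$, with no moment assumption on $Y$. At $\gamma=0$ we get $\phi(0)=\Et_\infty Y\ge1$, possibly infinite, so the limit $\gamma\downarrow0$ alone does not work. Instead we take $\gamma=1$ and then tilt. At $\gamma=1$, the term $Y=1$ contributes exactly $\bar\nu(1)<1$. For $Y=k\ge2$, the sum $\sum_i(1+\sum_{l\ne i}W_l)^{-1}$ is strictly less than one with positive probability, indeed always, because $\sum_i W_i/\sum_l W_l=1$ and $1+\sum_{l\ne i}W_l>\sum_l W_l/W_i$ requires $W_i\le1$, which holds. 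Hence $\phi(1)<1$ by dominated convergence, since the sum is bounded by $\sum_i W_i\big/\max(1,\dots)$. Here we must check that the bound $(1+\sum_{l\ne i}W_l)^{-1}\le W_i/\sum_l W_l$ holds, which is exactly $W_i\le1$.

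With $\gamma=1$, Markov's inequality gives $\Pt_\infty(\max_{|x|=n}\theta_T(x)>e^{-cn})\le e^{cn}\phi(1)^nC$. We choose $c<-\log\phi(1)$ and conclude by Borel--Cantelli. If the crude bound dropping $W_{x_{j+1}}$ loses too much, we keep the exact ratio $W_{x_{j+1}}/(W_{x_{j+1}}+\sum_{l\ne i}W_l)$ instead; its expected sum over children is exactly one, so strict contraction must then come from the extra factor in \eqref{eq:conditional-flow-comparison}, namely $U_xW_x$ versus $U_x$. We expect the first route to suffice.
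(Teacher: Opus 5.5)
Your argument breaks down at the claim $\phi(1)<1$, and no exponent $\gamma\le1$ can work at all. Since $\theta_T(x)\le1$, you have $\theta_T(x)^\gamma\ge\theta_T(x)$, so flow conservation gives $\sum_{|x|=n}\theta_T(x)^\gamma\ge\sum_{|x|=n}\theta_T(x)=1$ for every $n$. A bound $C\lambda^n$ with $\lambda<1$ is therefore impossible. At $\gamma=1$ this is exactly the useless Markov bound you discard in your first paragraph. In your computation, the inequality $(1+\sum_{l\ne i}W_l)^{-1}\le W_i/\sum_lW_l$ goes the wrong way. Write $S_i=\sum_{l\ne i}W_l>0$. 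The reversed inequality $W_i/(W_i+S_i)\le1/(1+S_i)$ is equivalent to $W_iS_i\le S_i$, which is exactly $W_i\le1$. Hence $\sum_{i=1}^k(1+S_i)^{-1}\ge\sum_iW_i/\sum_lW_l=1$ for every $k$; for example, $k=2$ and $W_1=W_2=1/2$ give $4/3$. So $\phi(1)\ge1$. Your fallback with the exact ratios at exponent one fails for the same reason. Those ratios sum to one deterministically, so there is nothing for the comparison $U_xW_x\le\theta_T(x)\le U_x$ to contract.

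The missing idea is to use an exponent $\gamma>1$. Then $\sum_{y^-=x}(\theta_T(y)/\theta_T(x))^\gamma<1$ at every vertex with $Y_x\ge2$, while $\max_{|x|=n}\theta_T(x)^\gamma\le\sum_{|x|=n}\theta_T(x)^\gamma$ still feeds Markov's inequality.

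The paper takes $\gamma=2$ with the exact split fractions $B_x=\sum_{y^-=x}(W_y/C_x^+)^2$, so that $H_{n+1}=\sum_{|x|=n}\theta_T(x)^2B_x$. Both $\theta_T(x)$ and $B_x$ depend on the subtree at $x$, so the paper decouples them through \eqref{eq:conditional-flow-comparison}. This gives $\Et_\infty[\theta_T(x)^2(1-B_x)\mid\mathcal F_x]\ge\delta U_x^2\ge\delta\,\Et_\infty[\theta_T(x)^2\mid\mathcal F_x]$ with $\delta=\Et_\infty[W^2(1-B)]>0$, hence $\Et_\infty H_n\le(1-\delta)^{n-1}$.

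Your crude path-product scheme can also be rescued, but only for $\gamma$ large depending on the law. For each $k\ge2$, $k\,\Et_\infty(1+\sum_{l<k}W_l)^{-\gamma}\to0$ as $\gamma\to\infty$, because $W>0$. A concentration bound for the bounded i.i.d.\ $W_l$ dominates these terms uniformly in $k$ once $\gamma\ge1$. Hence $\phi(\gamma)\to\bar\nu(1)<1$. For fixed $\gamma$ this can fail. For $\bar\nu=(1-\epsilon)\delta_1+\epsilon\delta_2$ with $\epsilon$ small, $W$ concentrates near zero and $\phi(\gamma)=1-\epsilon+2\epsilon\,\Et_\infty(1+W)^{-\gamma}>1$.

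Finally, the root factor in your scheme is $\Et_\infty J$, which may be infinite, since no moment of the root law is assumed. Instead of absorbing it into a constant, bound $\theta_T(x)\le\theta_T(x)/\theta_T(x_1)$ and apply the estimate in each of the finitely many first-generation subtrees.
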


\begin{proof}
For an ordinary descendant tree with $Y$ children, let $W_i$ be
the conductance through its $i$th child, including the initial unit
edge, and put
\begin{equation}
 B=\sum_{i=1}^Y\left(\frac{W_i}{C}\right)^2,
 \qquad C=\sum_{i=1}^YW_i,\qquad
 \delta=\Et_\infty[W^2(1-B)].
 \label{eq:split-loss}
\end{equation}
Every $W_i$ is positive. Hence $B\le1$, with strict inequality when
$Y\ge2$, and $0<\delta<1$. At a nonroot vertex $x$, let $B_x$
be the corresponding sum of squared current fractions to its children.
By \eqref{eq:conditional-flow-comparison} and independence from $\mathcal F_x$,
\begin{align}
 \Et_\infty[\theta_T(x)^2(1-B_x)\mid\mathcal F_x]
 &\ge U_x^2\Et_\infty[W^2(1-B)]\nonumber\\
 &=\delta U_x^2
 \ge\delta\Et_\infty[\theta_T(x)^2\mid\mathcal F_x].
 \label{eq:conditional-loss}
\end{align}
Let $H_n=\sum_{|x|=n}\theta_T(x)^2$. The identity
$H_{n+1}=\sum_{|x|=n}\theta_T(x)^2B_x$ and
\eqref{eq:conditional-loss} imply
\begin{equation}
 \Et_\infty H_{n+1}\le(1-\delta)\Et_\infty H_n\quad(n\ge1),\qquad
 \Et_\infty H_n\le(1-\delta)^{n-1},
 \label{eq:flow-second-moment}
\end{equation}
since $H_1\le1$. No moment of the root law is used. Choose $c>0$
with $e^{2c}(1-\delta)<1$. Markov's inequality gives
\begin{equation}
 \Pt_\infty\{\max_{|x|=n}\theta_T(x)>e^{-cn}\}
 \le e^{2cn}(1-\delta)^{n-1}.
 \label{eq:flow-tail}
\end{equation}
The right-hand side is summable, so Borel--Cantelli proves the claim.
\end{proof}

\begin{proposition}
\label{prop:finite-log}
If $\Et_\infty\log(Y+1)<\infty$, then $D_G(T)=1$ for
$\Pt_\infty$-almost every tree.
\end{proposition}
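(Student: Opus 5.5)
The upper bound $D_G(T)\le1$ comes from Theorem~\ref{thm:half-analytic}. Lemma~\ref{lem:GW-capacity} gives \eqref{eq:capacity-condition} almost surely, and we have already checked that \eqref{eq:weighted-setting} holds for $\Pt_\infty$-almost every tree. For the lower bound we verify the sufficient condition \eqref{eq:analytic-ray-condition} of Theorem~\ref{thm:half-analytic} on a set of rays of full harmonic measure, almost surely. Unit conductances give $\Lambda(x)\le C_x^+\le Y_x$ by \eqref{eq:general-current} and \eqref{eq:GW-conductance}. By Lemma~\ref{lem:harmonic-decay}, almost surely $\log(1/\theta_T(\xi_n))\ge cn$ for all large $n$ and every ray. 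It therefore suffices to show that $\theta_T$-almost every ray satisfies
\[
 \frac{\log(Y_{\xi_n}+1)}{n}\longrightarrow0 .
\]

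To get this, fix $\epsilon>0$ and set $A_n=\{k:\log(k+1)>\epsilon n\}$. Apply \eqref{eq:harmonic-domination} with $A=A_n$:
\[
 \Et_\infty\,\theta_T\{\xi:\log(Y_{\xi_n}+1)>\epsilon n\}
 =\Et_\infty\sum_{|x|=n}\theta_T(x)\ind_{\{Y_x\in A_n\}}
 \le\kappa^{-1}\Pt_\infty(\log(Y+1)>\epsilon n).
\]
Here $\kappa=\Et_\infty W>0$, because $W>0$ almost surely. The finite logarithmic moment makes $\sum_n\Pt_\infty(\log(Y+1)>\epsilon n)$ finite. Hence
\[
 \Et_\infty\sum_n\theta_T\{\xi:\log(Y_{\xi_n}+1)>\epsilon n\}<\infty,
\]
so the sum is almost surely finite. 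By Borel--Cantelli under the probability measure $\theta_T$ on $\partial T$, for almost every tree, $\theta_T$-almost every ray has $\log(Y_{\xi_n}+1)\le\epsilon n$ eventually. We then intersect over rational $\epsilon$.

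Combining the two bounds: $\log^+\Lambda(\xi_n)\le\log(Y_{\xi_n}+1)=o(n)$ and the denominator is at least $cn$, so \eqref{eq:analytic-ray-condition} holds on a set of harmonic measure one. This set has positive harmonic measure because $\theta_T$ is a probability measure. Rays with $\theta_T(\xi_n)=0$ for some $n$ form a null set, so the positivity requirement in \eqref{eq:analytic-good-rays} causes no trouble. Theorem~\ref{thm:half-analytic} then gives the cutset condition in \eqref{eq:half-condition}, so $D_G\ge1$ and therefore $D_G(T)=1$.

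The main point to check carefully is the use of \eqref{eq:harmonic-domination}. The bound is uniform in $n$ with the depth-dependent set $A_n$, and the lemma applies at every depth $n\ge1$ regardless of the root law $J$. The root vertex contributes only a single term, which is irrelevant in the limit. A second check is that $\Lambda$ and $\theta_T$ are taken in the pruned tree. This is harmless because Lemma~\ref{lem:pruning} preserves $D_G$ and \eqref{eq:capacity-condition}, and the final transfer to the unpruned tree is done in the proof of Theorem~\ref{thm:GW}.
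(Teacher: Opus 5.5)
Your proof is correct and follows the paper's argument. You use \eqref{eq:harmonic-domination} with the depth-dependent sets, the logarithmic moment and Borel--Cantelli to get $\log(Y_{\xi_n}+1)=o(n)$ along harmonic rays, Lemma~\ref{lem:harmonic-decay} for the linear lower bound on $\log(1/\theta_T(\xi_n))$, and then Theorem~\ref{thm:half-analytic} with Lemma~\ref{lem:GW-capacity}; the only cosmetic differences are that you check \eqref{eq:analytic-ray-condition} directly via $\Lambda(x)\le C_x^+\le Y_x$ rather than the degree form \eqref{eq:ray-assumption}, and you apply Borel--Cantelli tree by tree rather than under the joint measure $\Pt_\infty(dT)\theta_T(d\xi)$.
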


\begin{proof}
Sample $T$ with law $\Pt_\infty$ and then a ray $\xi$ with law
$\theta_T$. Write
$\mathbf Q(dT,d\xi)=\Pt_\infty(dT)\theta_T(d\xi)$.
Equation~\eqref{eq:harmonic-domination} gives, for every
$\varepsilon>0$,
\begin{equation}
 \mathbf Q\{\log\de(\xi_n)>\varepsilon n\}
 \le\kappa^{-1}\Pt_\infty\{\log(Y+1)>\varepsilon n\}.
 \label{eq:degree-tail-on-ray}
\end{equation}
The right-hand side is summable by the logarithmic moment
assumption. Borel--Cantelli for positive rational $\varepsilon$
and Fubini's theorem give $\log\de(\xi_n)=o(n)$ on almost every
harmonic ray of almost every tree. By \eqref{eq:harmonic-decay},
$\log(1/\theta_T(\xi_n))\ge cn$ eventually, so
\eqref{eq:ray-assumption} holds. Lemma~\ref{lem:GW-capacity}
verifies the capacity assumption. The degree criterion
\eqref{eq:ray-assumption}, obtained from
Theorem~\ref{thm:half-analytic}, gives the conclusion.
\end{proof}

\subsubsection{Infinite logarithmic moment}
\label{sec:infinite-log}

We now verify the same cutset condition using flows on subtrees
whose vertices have bounded offspring.
For $M\ge1$, set
\begin{equation}
 p_M=\Pt_\infty(Y\le M),\qquad m_M=\Et_\infty[Y\ind_{\{Y\le M\}}].
 \label{eq:truncation}
\end{equation}
Given a nonroot vertex $v$ with $Y_v\le M$, retain a descendant $x$
if $Y_z\le M$ at every $z\in[v,x]$. The retained tree has offspring
law $\operatorname{Binomial}(\widehat Y_M,p_M)$, where
$\widehat Y_M$ has the law of $Y$ conditioned on $Y\le M$.
Indeed, a retained parent has offspring law $\widehat Y_M$;
each of its children is retained independently with probability
$p_M$, and conditional on retention its offspring has that same
law. Iteration proves the branching property, with mean
$p_M\Et_\infty[Y\mid Y\le M]=m_M$.
Figure~\ref{fig:retained-subtree} illustrates the construction for $M=3$.

\begin{figure}[htbp]
 \centering
 \input{figures/cable_retained_subtree.tex}
 \caption{The retained subtree for $M=3$ is black within the grey
 ambient tree. The vertex $z$ has four children, so it and its whole
 descendant subtree are excluded; $x$ is excluded even though $Y_x=2$.
 The root $v$ has three ambient children and two retained children.}
 \label{fig:retained-subtree}
\end{figure}

\begin{proposition}
\label{prop:truncated-bound}
For every $M$ with $m_M>1$, almost surely
\begin{equation}
 D_G(T)\ge\frac{\log m_M}{\log(M+1)}.
 \label{eq:truncated-bound}
\end{equation}
\end{proposition}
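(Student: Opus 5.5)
The plan is to exhibit, almost surely, a single unit flow on $T$ that is carried by a retained subtree with bounded offspring. I then compare its cylinder masses with $\rho_o$ through edgewise bounds on the normalized Green correlation. This yields a positive cutset infimum for every exponent $s<\log m_M/\log(M+1)$.

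\emph{Step 1 (edge correlations inside a retained subtree).} Let $u=x^-$ and suppose $Y_u\le M$ and $Y_x\le M$, with unit conductances. By \eqref{eq:hitting}, $\rho(u,x)^2=H(x,u)H(u,x)$. The walk from $x$ to $u$ competes only with the branches of $x$ below it, and these have total conductance $C_x^+\le Y_x$ by \eqref{eq:GW-conductance}. Hence $H(x,u)=1/(1+C_x^+)\ge 1/(M+1)$, as in \eqref{eq:edge-hitting-green}. Symmetrically, the other branches of $u$ are its parent side, with $a_u\le1$, and its $Y_u-1$ remaining children, each with $C_{u,z}\le1$. Their total conductance is at most $M$, so $H(u,x)\ge1/(M+1)$. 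Therefore $\rho(u,x)\ge(M+1)^{-1}$. By \eqref{eq:multiplicative}, for $x$ in the retained tree $S_v$ rooted at $v$,
\[
\rho_o(x)\ge\rho_o(v)(M+1)^{-(|x|-|v|)}.
\]
The edge into $v$ needs no bound, since only $\rho_o(v)>0$ enters.

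\emph{Step 2 (a surviving retained tree exists a.s.).} The retained tree at $v$ is Galton--Watson with mean $m_M>1$ and offspring at most $M$. It therefore survives with some probability $\eta>0$, and this event includes $\{Y_v\le M\}$, which has probability $p_M>0$. Retained trees at distinct vertices of one generation depend on disjoint label sets, so they are independent given the generation. Under $\Pt_\infty$ we have $Y\ge1$ and $\Pt_\infty(Y\ge2)>0$, so generation sizes tend to infinity almost surely. Consequently $\Pt_\infty(\text{no retained tree at depth } n \text{ survives})\le\Et_\infty(1-\eta)^{|\{|x|=n\}|}\to0$. I expect this step to be the main bookkeeping obstacle, namely checking independence correctly with the Ulam--Harris labels and $\mathcal F_x$. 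The rest is deterministic.

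\emph{Step 3 (flow and cutset bound).} Fix a vertex $v$ whose retained tree $S_v$ survives. Conditioned on survival, $\operatorname{br}(S_v)=m_M$ by \cite[Corollary~5.10]{LP}. Fix $1<\lambda<m_M$. Max-flow/min-cut gives a unit flow $\mu$ on $\partial S_v$ with $\mu(x)\le A\lambda^{-(|x|-|v|)}$. Push it along $[o,v]$ to get a unit flow on $\partial T$.

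Now take $s=\log\lambda/\log(M+1)$, so that $\lambda^{-k}=((M+1)^{-k})^s$, and let $\Pi$ be any cutset. If $\Pi$ contains an ancestor of $v$, including $v$ itself, its weight is at least $\rho_o(v)^s$. Otherwise $\Pi$ meets every ray of $S_v$ strictly below $v$. Its members that carry $\mu$-mass lie in $S_v$, so by Step 1
\[
\sum_{x\in\Pi}\rho_o(x)^s\ge\rho_o(v)^sA^{-1}\sum_{x\in\Pi}\mu(x)=\rho_o(v)^s/A .
\]
Thus the cutset infimum at exponent $s$ is positive, and $D_G(T)\ge\log\lambda/\log(M+1)$. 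Letting $\lambda\uparrow m_M$ along a countable sequence proves \eqref{eq:truncated-bound} almost surely.
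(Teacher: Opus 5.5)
Your proposal is correct and follows essentially the same route as the paper. It combines three ingredients: a max-flow/min-cut flow on a surviving bounded-offspring retained subtree $S_v$, the edgewise bound $\rho\ge(M+1)^{-1}$ propagated by \eqref{eq:multiplicative}, and the estimate $\Et_\infty(1-\eta)^{L_n}\to0$, which locates such a $v$ almost surely. The only differences are cosmetic: you bound the hitting probabilities through the conductances $1/(1+C_x^+)$ and $1/(1+a_u+\sum_{z}C_{u,z})$ rather than by the one-step probability $1/\deg_T$, and you split cutsets according to whether they contain an ancestor of $v$ instead of enlarging the constant along $[o,v]$.
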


\begin{proof}
Such a retained tree survives with positive probability, and its
branching number on survival is $m_M$ by~\cite[Corollary~5.10]{LP}.
For $0<s<\log m_M/\log(M+1)$, \eqref{eq:branching-number}
and max-flow/min-cut give a unit flow $\mu$ on a surviving retained
tree rooted at $v$ such that
\begin{equation}
 \mu(x)\le A(M+1)^{-s|x|_v}
 \label{eq:retained-flow}
\end{equation}
for some finite $A$, where $|x|_v$ is the depth from $v$.

Every retained vertex has degree at most $M+1$ in the ambient
pruned tree $T$.
Writing $H_T$ for hitting probabilities in that tree, for adjacent
retained vertices $x,y$,
\begin{equation}
 \rho_T(x,y)=\sqrt{H_T(x,y)H_T(y,x)}
 \ge\frac1{\sqrt{\deg_T(x)\deg_T(y)}}\ge\frac1{M+1}.
 \label{eq:retained-rho}
\end{equation}
Here the first inequality uses the probability of an immediate step.
Multiplicativity of normalized Green functions along a tree path gives
$\rho_T(v,x)\ge(M+1)^{-|x|_v}$. Consequently
$\mu(x)\le A\rho_T(v,x)^s$. Extend the flow along $[o,v]$ with mass
one and set it to zero on all unused branches. Increasing its constant
over this finite initial path gives $\mu(x)\le A'\rho_o(x)^s$
throughout $T$. Every cutset then has weight at least $1/A'$.

It remains to find a surviving retained tree. Let $a_M>0$ be the
probability of the required event at an ordinary vertex, including
$Y_v\le M$. Under \eqref{eq:leafless-law}, the generation size $L_n$
tends to infinity almost surely. Indeed it is nondecreasing, and a
finite eventual value would require a fixed positive number of
individuals to reproduce with exactly one child forever, an event
of probability zero since $\Pt_\infty(Y=1)<1$. Before revealing offspring
at generation $n$, the descendant trees there are independent, so
\begin{equation}
 \Pt_\infty(\text{no suitable vertex anywhere})
       \le\Et_\infty(1-a_M)^{L_n}\longrightarrow0.
 \label{eq:find-subtree}
\end{equation}
The limit follows by bounded convergence. Apply the flow bound and
intersect the resulting events over the countably many eligible $M$.
\end{proof}

If $\Et_\infty\log(Y+1)=\infty$, then
\begin{equation}
 \limsup_{M\to\infty}\frac{\log m_M}{\log(M+1)}=1.
 \label{eq:truncation-exponent}
\end{equation}
For completeness, $m_M\le M$ gives the upper bound. A smaller limsup
would imply $m_M\le A M^a$ for some $a<1$ and all large $M$.
Then
\begin{equation}
 \Pt_\infty(2^k<Y\le2^{k+1})\le2^{-k}m_{2^{k+1}}
       \le A'2^{-(1-a)k},
 \label{eq:dyadic-tail}
\end{equation}
which, after multiplication by $k+1$ and summation, gives a finite
logarithmic moment, a contradiction. Infinite logarithmic moment also
implies $m_M\to\infty$, so the relevant cutoffs have $m_M>1$.
\begin{proof}[Proof of Theorem~\ref{thm:GW}]
Conditional on $\mathsf S$, the pruned tree has law $\Pt_\infty$
as described in \eqref{eq:surviving-law}. If $Y$ has finite
logarithmic moment, Proposition~\ref{prop:finite-log} gives $D_G=1$.
Otherwise, \eqref{eq:truncated-bound} and
\eqref{eq:truncation-exponent} give $D_G\ge1$, while
Lemma~\ref{lem:GW-capacity} and Theorem~\ref{thm:half-analytic}
give the reverse inequality. Lemma~\ref{lem:pruning} transfers both
$D_G=1$ and \eqref{eq:capacity-condition} to the original tree.
Theorem~\ref{thm:half-analytic} now gives the critical intensity and
non-percolation at one-half. Equations~\eqref{eq:frostman},
\eqref{eq:survival} and \eqref{eq:full-connection} give positive
root percolation probability at every $\alpha>1/2$ on the same
full-probability set of trees.
\end{proof}

For an explicit law with infinite logarithmic moment, take
\begin{equation}
 \nu(2^k)=\frac1{k(k+1)}\quad(k\ge1),\qquad
 \nu(j)=0\quad\text{otherwise}.
 \label{eq:heavy-tail-example}
\end{equation}
There is no extinction for this law, so $\bar\nu=\nu$.
Its logarithmic moment and every positive power moment are
infinite. Moreover,
\begin{equation}
 m_{2^k}\ge\frac{2^k}{k(k+1)},
 \label{eq:heavy-tail-truncated-mean}
\end{equation}
which makes \eqref{eq:truncation-exponent} immediate.

\printbibliography
\end{document}

%% file: figures/cable_green_boundary.tex
\begin{tikzpicture}[
 x=1.12cm,y=0.96cm,
 font=\small,
 line cap=round,line join=round,
 edge/.style={draw=black!60,line width=0.5pt},
 ray/.style={draw=black,line width=0.9pt},
 shared/.style={draw=black,line width=1.8pt},
 continuation/.style={dash pattern=on 1.8pt off 2pt},
 vertex/.style={circle,fill=black,inner sep=0pt,minimum size=2.7pt},
 cut vertex/.style={circle,draw=black,fill=white,line width=0.6pt,
                    inner sep=0pt,minimum size=7.5pt}
]
 \coordinate (o) at (0,4.15);
 \coordinate (m) at (0.72,3.4);
 \coordinate (a) at (1.5,2.6);
 \coordinate (x) at (-2.8,2.6);
 \coordinate (b) at (0.05,1.5);
 \coordinate (c) at (2.9,1.5);
 \coordinate (u) at (2.15,0.55);
 \coordinate (v) at (3.7,0.55);
 \coordinate (xl) at (-4.0,1.35);
 \coordinate (xm) at (-2.8,1.12);
 \coordinate (xr) at (-1.6,1.35);
 \coordinate (xll) at (-4.2,0.55);
 \coordinate (xlr) at (-3.55,0.55);
 \coordinate (xmm) at (-2.8,0.5);
 \coordinate (xrl) at (-1.85,0.5);
 \coordinate (xrr) at (-1.3,0.5);
 \coordinate (bl) at (-0.58,0.6);
 \coordinate (br) at (0.72,0.6);

 \fill[black!7] (x) -- (-4.16,1.35) -- (-4.5,-0.05)
   -- (-1.12,-0.05) -- (-1.44,1.35) -- cycle;

 \draw[edge] (o) -- (x);
 \draw[edge] (x) -- (xl) (x) -- (xm) (x) -- (xr);
 \draw[edge] (xl) -- (xll) (xl) -- (xlr) (xm) -- (xmm);
 \draw[edge] (xr) -- (xrl) (xr) -- (xrr);
 \draw[edge,continuation]
   (xll) -- (-4.32,0) (xlr) -- (-3.4,0) (xmm) -- (-2.8,0)
   (xrl) -- (-1.95,0) (xrr) -- (-1.18,0);

 \draw[edge] (b) -- (br) (c) -- (v);
 \draw[edge,continuation] (br) -- (1.05,0);
 \draw[edge] (v) -- (3.38,0.22) (v) -- (4.02,0.22);
 \draw[edge,continuation]
   (3.38,0.22) -- (3.16,-0.04) (4.02,0.22) -- (4.25,-0.04);

 \draw[shared] (o) -- (m) -- (a);
 \draw[ray] (a) -- (b) -- (bl) (a) -- (c) -- (u);
 \draw[ray,continuation] (bl) -- (-0.98,0) (u) -- (1.94,0);

 \foreach \name in {x,b,u,v}
   \node[cut vertex] at (\name) {};
 \foreach \name in {o,m,a,x,b,c,u,v,xl,xm,xr,xll,xlr,xmm,xrl,xrr,bl,br}
   \node[vertex] at (\name) {};

 \node[above left=2pt] at (o) {$o$};
 \node[left=5pt] at (x) {$x$};
 \node[above right=3pt] at (a) {$a=\xi\wedge\eta$};
 \node[below=4pt] at (-0.98,0) {$\xi$};
 \node[below=4pt] at (1.94,0) {$\eta$};
 \node[below=5pt] at (-2.8,-0.05) {$\partial T_x$};
 \node[anchor=west] at (2.0,3.92)
   {$\delta_G(\xi,\eta)=\rho_o(a)$};

 \node[cut vertex] at (4.62,1.5) {};
 \node[vertex] at (4.62,1.5) {};
 \node[anchor=west] at (4.82,1.5) {cutset $\Pi$};
\end{tikzpicture}

%% file: figures/cable_ray_capacity.tex
\begin{tikzpicture}[
 x=0.97cm,y=0.88cm,
 font=\small,
 line cap=round,line join=round,
 edge/.style={draw=black!60,line width=0.5pt},
 ray/.style={draw=black,line width=0.9pt},
 segment/.style={draw=black,line width=1.8pt},
 wire/.style={draw=black,line width=0.9pt},
 continuation/.style={dash pattern=on 1.8pt off 2pt},
 vertex/.style={circle,fill=black,inner sep=0pt,minimum size=2.7pt},
 branch brace/.style={draw=black!65,line width=0.4pt,
   decorate,decoration={brace,amplitude=3pt}}
]
 \node at (1.9,5.18) {(a) Ray segment};
 \node at (9.9,5.18) {(b) After wiring};

 \coordinate (o) at (1.25,4.3);
 \coordinate (xi1) at (1.25,3.1);
 \coordinate (xinminus) at (1.25,1.4);
 \coordinate (xin) at (1.25,0.1);
 \draw[segment] (o) -- (xi1) -- (1.25,2.7);
 \node at (1.25,2.26) {$\vdots$};
 \draw[segment] (1.25,1.84) -- (xinminus) -- (xin);

 \foreach \level/\index in {4.3/0,3.1/1,1.4/{n-1}} {
   \draw[edge] (1.25,\level) -- (2.0,{\level+0.25})
               (1.25,\level) -- (2.0,{\level-0.25});
   \draw[edge] (2.0,{\level+0.25}) -- (2.42,{\level+0.45})
               (2.0,{\level+0.25}) -- (2.42,{\level+0.18})
               (2.0,{\level-0.25}) -- (2.42,{\level-0.18})
               (2.0,{\level-0.25}) -- (2.42,{\level-0.45});
   \draw[edge,continuation]
     (2.42,{\level+0.45}) -- (2.72,{\level+0.55})
     (2.42,{\level+0.18}) -- (2.72,{\level+0.12})
     (2.42,{\level-0.18}) -- (2.72,{\level-0.12})
     (2.42,{\level-0.45}) -- (2.72,{\level-0.55});
   \node[vertex] at (2.0,{\level+0.25}) {};
   \node[vertex] at (2.0,{\level-0.25}) {};
   \draw[branch brace] (2.94,{\level+0.56}) -- (2.94,{\level-0.56});
   \node[anchor=west] at (3.08,\level) {$s_{\index}(\xi)$};
 }

 \fill[black!7] (xin) -- (0.4,-0.35) -- (-0.05,-1.25)
   -- (2.65,-1.25) -- (2.1,-0.35) -- cycle;
 \draw[edge] (xin) -- (0.58,-0.4) (xin) -- (1.92,-0.4);
 \draw[edge] (0.58,-0.4) -- (0.2,-0.88)
             (0.58,-0.4) -- (0.86,-0.88)
             (1.92,-0.4) -- (1.64,-0.88)
             (1.92,-0.4) -- (2.3,-0.88);
 \draw[edge,continuation]
   (0.2,-0.88) -- (0.0,-1.2) (0.86,-0.88) -- (0.98,-1.2)
   (1.64,-0.88) -- (1.52,-1.2) (2.3,-0.88) -- (2.52,-1.2);
 \draw[ray] (xin) -- (1.25,-0.6);
 \draw[ray,continuation] (1.25,-0.6) -- (1.25,-1.22);
 \node[vertex] at (0.58,-0.4) {};
 \node[vertex] at (1.92,-0.4) {};
 \node[vertex] at (1.25,-0.6) {};
 \draw[branch brace] (2.94,0.03) -- (2.94,-1.25);
 \node[anchor=west] at (3.08,-0.61) {$C_{\xi_n}^{+}$};
 \node[below=3pt] at (1.25,-1.22) {$\xi$};

 \foreach \name in {o,xi1,xinminus,xin}
   \node[vertex] at (\name) {};
 \node[left=5pt] at (o) {$o=\xi_0$};
 \node[left=5pt] at (xi1) {$\xi_1$};
 \node[left=5pt] at (xinminus) {$\xi_{n-1}$};
 \node[left=5pt] at (xin) {$\xi_n$};

 \draw[-{Stealth[length=5pt,width=4pt]},line width=0.6pt]
   (4.5,2.25) -- (6.3,2.25)
   node[midway,above=4pt] {wire $[o,\xi_n]$};

 \draw[wire] (9.9,4.2) -- (9.9,3.5);
 \node[vertex,minimum size=4.5pt] at (9.9,4.2) {};
 \node[above=4pt] at (9.9,4.2) {$[o,\xi_n]$};
 \draw[wire] (7.15,3.5) -- (12.65,3.5);
 \draw[wire] (7.15,0.15) -- (12.65,0.15);
 \foreach \position/\conductance in
   {7.15/{s_0(\xi)},8.75/{s_1(\xi)},11.05/{s_{n-1}(\xi)},12.65/{C_{\xi_n}^{+}}} {
   \draw[wire] (\position,3.5) -- (\position,0.15);
   \node[draw=black,fill=white,line width=0.6pt,
         minimum width=1.35cm,minimum height=0.72cm,inner sep=3pt]
     at (\position,1.9) {$\conductance$};
   \node[vertex] at (\position,3.5) {};
   \node[vertex] at (\position,0.15) {};
 }
 \node at (9.96,1.9) {$\cdots$};
 \draw[wire] (9.9,0.15) -- (9.9,-0.38);
 \node[vertex] at (9.9,-0.38) {};
 \node[below=3pt] at (9.9,-0.38) {$\infty$};
 \node at (9.9,-1.48)
   {$\operatorname{cap}([o,\xi_n])=\sum_{j=0}^{n-1}s_j(\xi)+C_{\xi_n}^{+}$};
\end{tikzpicture}

%% file: figures/cable_retained_subtree.tex
\begin{tikzpicture}[
 x=1.02cm,y=0.87cm,
 font=\small,
 line cap=round,line join=round,
 ambient/.style={draw=black!42,line width=0.55pt},
 retained/.style={draw=black,line width=1pt},
 continuation/.style={dash pattern=on 1.8pt off 2pt},
 ambient vertex/.style={circle,fill=black!42,inner sep=0pt,minimum size=2.7pt},
 retained vertex/.style={circle,fill=black,inner sep=0pt,minimum size=3pt}
]
 \coordinate (parent) at (0,5.15);
 \coordinate (v) at (0,4.2);
 \coordinate (a) at (-3.65,2.8);
 \coordinate (b) at (-0.7,2.8);
 \coordinate (z) at (3.55,2.8);
 \coordinate (aa) at (-5.05,1.35);
 \coordinate (ab) at (-3.65,1.35);
 \coordinate (ac) at (-2.25,1.35);
 \coordinate (bb) at (-0.45,1.35);
 \coordinate (zl) at (1.75,1.35);
 \coordinate (zm) at (2.95,1.35);
 \coordinate (x) at (4.15,1.35);
 \coordinate (zr) at (5.45,1.35);
 \coordinate (aaone) at (-5.45,0.1);
 \coordinate (aatwo) at (-4.7,0.1);
 \coordinate (abone) at (-3.65,0.1);
 \coordinate (acone) at (-2.65,0.1);
 \coordinate (actwo) at (-1.85,0.1);
 \coordinate (bbone) at (-0.9,0.1);
 \coordinate (bbtwo) at (0.1,0.1);
 \coordinate (zlone) at (1.55,0.1);
 \coordinate (zmone) at (2.55,0.1);
 \coordinate (zmtwo) at (3.15,0.1);
 \coordinate (xone) at (3.85,0.1);
 \coordinate (xtwo) at (4.65,0.1);
 \coordinate (zrone) at (5.6,0.1);

 \fill[black!7] (z) -- (1.58,1.35) -- (1.28,-0.45)
   -- (5.9,-0.45) -- (5.62,1.35) -- cycle;

 \draw[ambient] (parent) -- (v);
 \foreach \treeparent/\treechild in {
   v/a,v/b,v/z,a/aa,a/ab,a/ac,b/bb,z/zl,z/zm,z/x,z/zr,
   aa/aaone,aa/aatwo,ab/abone,ac/acone,ac/actwo,bb/bbone,bb/bbtwo,
   zl/zlone,zm/zmone,zm/zmtwo,x/xone,x/xtwo,zr/zrone}
   \draw[ambient] (\treeparent) -- (\treechild);
 \foreach \treeparent/\treechild in {
   v/a,v/b,a/aa,a/ab,a/ac,b/bb,
   aa/aaone,aa/aatwo,ab/abone,ac/acone,ac/actwo,bb/bbone,bb/bbtwo}
   \draw[retained] (\treeparent) -- (\treechild);

 \foreach \branchposition/\branchshift in {
   -5.45/-0.12,-4.7/0.1,-3.65/0,-2.65/-0.1,-1.85/0.1,-0.9/-0.1,0.1/0.1}
   \draw[retained,continuation] (\branchposition,0.1)
     -- ({\branchposition+\branchshift},-0.38);
 \foreach \branchposition/\branchshift in {
   1.55/-0.08,2.55/-0.08,3.15/0.08,3.85/-0.08,4.65/0.08,5.6/0.1}
   \draw[ambient,continuation] (\branchposition,0.1)
     -- ({\branchposition+\branchshift},-0.38);

 \foreach \name in {parent,zl,zm,x,zr,zlone,zmone,zmtwo,xone,xtwo,zrone}
   \node[ambient vertex] at (\name) {};
 \foreach \name in {v,a,b,aa,ab,ac,bb,aaone,aatwo,abone,acone,actwo,bbone,bbtwo}
   \node[retained vertex] at (\name) {};
 \node[fill=white,inner sep=0.7pt] at (z) {$\times$};

 \node[left=4pt,text=black!65] at (parent) {$v^-$};
 \node[above left=4pt] at (v) {$v$};
 \node[anchor=west] at (0.28,4.4) {$Y_v=3$};
 \node[above right=4pt] at (z) {$z$: $Y_z=4>M$};
 \node[above right=3pt] at (x) {$x$};
 \node at (3.6,-0.93) {$Y_x=2\le M$, but $x$ is excluded};

 \node[anchor=west] at (-5.55,5.02) {$M=3$};
 \draw[retained] (-5.5,4.37) -- (-4.82,4.37);
 \node[retained vertex] at (-5.16,4.37) {};
 \node[anchor=west] at (-4.66,4.37) {retained subtree};
 \draw[ambient] (-5.5,3.8) -- (-4.82,3.8);
 \node[ambient vertex] at (-5.16,3.8) {};
 \node[anchor=west] at (-4.66,3.8) {ambient tree};
\end{tikzpicture}